\definecolor{Gray}{gray}{0.35}
\definecolor{LightCyan}{rgb}{0.88,1,1}
\newcommand{\F}{\mathcal{F}}
\newcommand{\I}{\mathcal{I}}
\newcommand{\D}{\mathcal{D}}
\newcommand{\A}{\mathcal{A}}
\newcommand{\PS}{\mathbf{PreStk}}
\newcommand{\Q}{\mathbf{QCoh}}
\newcommand{\M}{\mathcal{M}}
\newcommand{\EQ}{\mathcal{Y}}
\newcommand{\RS}{{\color{white!05!black}\mathbb{R}\underline{\mathrm{Sol}}}}
\newcommand{\Ld}{{\color{white!10!black}\mathbf{L}}}
\newcommand{\DG}
{{\color{white!10!black}\mathbf{dg}_{\mathcal{D}_X}}}
\tikzset{%
    symbol/.style={%
        draw=none,
        every to/.append style={%
            edge node={node [sloped, allow upside down, auto=false]{$#1$}}}
    }
}
\numberwithin{equation}{section}
\newtheorem{thm}{Theorem}[section]
\newaliascnt{lem}{thm}
\newaliascnt{sublem}{thm}
\newtheorem{sublem}[sublem]{Sub-lemma}
\newaliascnt{warn}{thm}
\newaliascnt{ass}{thm}
\newtheorem{ass}[ass]{Assumptions}
\newaliascnt{prop}{thm}
\newtheorem{prop}[prop]{Proposition}
\newaliascnt{cor}{thm}
\newtheorem{cor}[cor]{Corollary}
\newaliascnt{defn}{thm}
\newtheorem{defn}[defn]{Definition}
\newaliascnt{ex}{thm}
\newtheorem{ex}[ex]{Example}
\newaliascnt{obs}{thm}
\newtheorem{obs}[obs]{Observation}
\tikzset{%
    symbol/.style={%
        draw=none,
        every to/.append style={%
            edge node={node [sloped, allow upside down, auto=false]{$#1$}}}
    }
}
\newaliascnt{rmk}{thm}
\newtheorem{rmk}[rmk]{Remark}
\newaliascnt{notate}{thm}
\newtheorem{notate}[notate]{Notation}
\newaliascnt{rem}{thm}
\newtheorem{rem}[rem]{Reminder}
\newaliascnt{coords}{thm}
\newaliascnt{interpret}{thm}
\newaliascnt{term}{thm}
\newaliascnt{infmdefn}{thm}
\newaliascnt{cons}{thm}
\newtheorem{cons}[cons]{Construction}
 \flushleft \begin{tabular}{@{}l@{}}%
\begin{document}

\title{The $\D$-Geometric Hilbert Scheme -- Part II:\\
\Large{Hilbert and Quot DG-Schemes}}

\author{%
\name{\normalsize{Jacob Kryczka and Artan Sheshmani}}}

\abstract{This is the second in a series of two papers developing a moduli-theoretic framework for differential ideal sheaves associated with formally integrable, involutive systems of algebraic partial differential equations (PDEs). Building on earlier work, which established the existence of moduli stacks for such systems with prescribed regularity and stability conditions, we now construct a derived enhancement of these moduli spaces. We prove the derived $\D$-Quot functor admits a global differential graded refinement representable by a suitable differential graded $\D$-manifold. We further analyze the finiteness, representability, and functoriality properties of these derived moduli spaces, establishing foundations for a derived deformation theory of algebraic differential equations.} 

%In particular, it encompasses de Rham, Dolbeault, and foliation cohomologies as special cases.

\date{\today}

\keywords{Non-linear PDEs, Involutivity, Formal Integrability, Moduli spaces, Hilbert and Quot schemes, DG-Manifold, Derived Stack}

\maketitle
\tableofcontents

\section{Introduction}
\label{sec: Introduction}
A fundamental challenge in the geometric study of partial differential equations (PDEs) is the systematic construction of moduli spaces that classify formally integrable and involutive algebraic differential systems. These systems are naturally encoded as $\D_X$-ideal sheaves inside the algebra of infinite jets $\mathcal{O}(J_X^{\infty}E)$ associated with a vector bundle $E$ over a smooth variety (or complex manifold) $X$. Their structure is governed by symbolic invariants and numerical data, including a Hilbert-type polynomial that captures the asymptotic growth of solutions under prolongation.

This paper is a continuation of the approach introduced in \cite{KSh}, with the aim of recasting the geometric theory of PDEs in the spirit of \cite{KLV}, within a unifying moduli-theoretic perspective.
In the previous work the $\D$-Hilbert and $\D$-Quot functors we introduced, and their representability was proven. The proof made use of techniques present in Douady's treatment of moduli of complex analytic subspaces \cite{Dou}, Grothendieck's original construction of Hilbert schemes \cite{Gro3}, and ideas from GIT \cite{M,HL}, adapted to the setting of differential ideals.

The moduli problem is bounded upon introducing a cohomological 
stability criteria (Spencer-stability), which serves as an analogue of Gieseker stability in the context of algebraic PDEs. This condition, defined in terms of Spencer cohomology and the Hilbert polynomial of the symbol module, ensures the existence of well-behaved moduli spaces for certain differential systems.
Moreover, in the course of the proof, building on the theory of Lie-pseudogroups of  \cite{C2,E,Ku2,KuSp,SiSt}, we showed that algebraic pseudogroups act naturally on jet spaces and that their invariant quotients may be described as well-defined $\D$-schemes. A PDE analog of the Lefschetz hyperplane theorem, was proven that ensured that Spencer regularity is preserved under restriction to non-characteristic subvarieties. This allows us to control local-to-global properties of symbolic data. We identified a stable locus defined via Spencer cohomology, which is isomorphic to a finite-type scheme, interpreted as the coarse moduli space for (Spencer) semi-stable algebraic differential equations, and prove the main result which roughly speaking, states the following.
\\

\noindent\textbf{Theorem} (\cite{KSh}) \hspace{1mm}\emph{Fix a numerical polynomial $P\in \mathbb{Q}[t].$ Then there exists a moduli functor classifying formally integrable (non-singular) algebraic differential systems with fixed number of dependent and independent variables and with specified (Spencer) regular symbolic behavior whose numerical polynomial $P_{\D}$ is equal to $P.$ Moreover, there exists a sub-functor consisting of Spencer semi-stable differential ideals representable by an ind-finite-type $\D$-scheme.}
\\

In the present paper, we develop a derived enhancement of these moduli spaces, refining their structure to incorporate homotopical and higher categorical aspects. Building on the techniques of \cite{BKS,BKSY2}, we construct a simplicial $\D$-presheaf that parametrizes, up to homotopy $A_{\infty}$-module structures on characteristic modules (e.g. symbolic systems) within the algebra of jet schemes. This extends foundational work of Ciocan-Fontanine and Kapranov \cite{CFK,CFK2}, who introduced a dg-moduli functor for submodules via a scheme of differential graded ideals, adapting their framework to the setting of certain algebraic nonlinear PDEs.

\subsection{Main result and organization}
In Section \ref{sec: Derived D Geometry} we briefly recall the basic tools from derived algebraic $\D$-geometry, following \cite{KSY,KSY2}. In particular, we establish several key results concerning descent, base-change and $\D$-geometric flatness in this context.
In Section \ref{sec: Derived moduli of D-ideals} we construct a derived enhancement of the moduli functor for $\D$-ideal sheaves. We discuss a $\D$-geometric version of Grassmannians.

In Section \ref{sec: D-Quot schemes as DG-D-schemes} we pass from the derived setting to the differential graded context. A key motivation for this transition from derived algebraic geometry to a differential graded model lies in the desire for an explicit underlying scheme-theoretic structure  compatible with the simplicial dg-manifold representing the moduli problem (see e.g. \cite{BKS,BKSY2}).
We define some natural notions for a dg-version of $\D$-geometry, similar in spirit but inequivalent to the derived $\D$-geometry counterparts in \cite{KSY}, and prove the main result. We alwars consider those smooth algebraic varieties $X$ which are $D$-affine (see \cite[Section 1.3]{KSh} and \cite[Theorem 1.6.5]{HT}).
\\

\noindent\textbf{Theorem.} (Proposition \ref{prop: DerDQuot is stack}, Proposition \ref{prop: Z-diagram} and Theorem 
\ref{thm: Finite--type dg-D-Quot})
\emph{
Let $X$ be a smooth $\D$-affine algebraic variety of dimension $n$, let $E$ be a fixed vector bundle of rank $m$ and consider a fixed $\D$-smooth sub-scheme $Z$ of the scheme of algebraic jets $J_X^{\infty}E.$ Fix a numerical polynomial $P\in \mathbb{Q}[t].$ Then, there  is a simplicial $\D$-presheaf $\mathbb{R}\underline{\mathcal{Q}\mathrm{uot}}_{\mathcal{D}_X,Z}^{P},$ which is a $\mathcal{D}$-stack for the $\D$-étale topology. Moreover, for involutive and formally integrable $\D$-algebraic PDEs, it is equivalent to a dg-$\D$-stack paramaterizing up-to-homotopy sub-module structures in the associated characteristic module which possesses a sub-functor 
corresponding to finite-type PDEs which is representable by a dg-$\mathcal{D}_X$-manifold almost of finite type.}
\\

Consequently, by Proposition \ref{prop: DerDQuot is stack}, the classical truncation is isomorphic to $\underline{\mathrm{Hilb}}_{\D}^P(Z),$ the classical $\D$-Hilbert moduli constructed in \cite[Theorem 5.1]{KSh}. This suggest that the main result of this paper may be viewed as the PDE analog of \cite{CFK2}. 
That is, it gives the `differential' analog of the main result of \cite{BKSY2} stating that when $X$ is a projective scheme and $\mathcal{F}$ is a coherent sheaf on $X$, then the derived Quot functor $\mathbb{R}\underline{\mathcal{Q}uot}_{\mathcal{F}}(X)$ is a derived stack represented by a DG-manifold of finite-type.

The constructions and techniques discussed in this paper serve to bridge the abstract homotopy-theoretic approach to higher geometry with concrete geometric invariants associated with PDEs. This provides a robust foundation for the deformation theory and obstruction calculus of algebraic differential systems, indicating the possibility to obtain an ``enumerative algebraic $\D$-geometry,'' with evident applications to counting problems in gauge theory and complex geometry.
A more systematic study of these problems is however outside of the scope of the current paper.
\\

\textbf{Acknowledgments.} 
J.K is supported by the Postdoctoral International Exchange Program of
Beijing Municipal Human Resources and Social Security Bureau. A.S. is supported by grants from Beijing Institute of Mathematical Sciences and Applications (BIMSA), the Beijing NSF BJNSF-IS24005, and the China National Science Foundation (NSFC) NSFC-RFIS program W2432008. He would like to thank China's National Program of Overseas High Level Talent for generous support.

\subsection{Conventions and notations}
\label{Notations and Conventions}
Throughout this paper we treat non-linear PDEs modulo their symmetries in a coordinate free way using $\D$-modules and $\D$-algebras and we’ll mainly work with related $(\infty,1)$-categories. The category of commutative unital left $\D$-algebras is denoted $\mathrm{CAlg}_X(\mathcal{D}_X)$ and its $\infty$-categorical connective enhancement by $\mathbf{CAlg}(\mathcal{D}_X).$  

Bold-faced notation such as $\mathbf{C}$ will be reserved for indicating higher homotopical structure e.g. $\infty$-categorical, differential graded (dg) or model categorical enrichments of ordinary (classical) abelian categories $\mathrm{C}.$ Similarly for $\mathbf{Spec}.$

\begin{notate}
\label{notate: DAG}
\normalfont
\begin{itemize}
 \item $\mathbf{Spc}$ is the symmetric monoidal $(\infty,1)$-category of spaces and for a field $k$ of characteristic zero $\mathbf{Vect}_k$ is the stable symmetric monoidal $(\infty,1)$-category of unbounded cochain complexes.
    \item  $\PS_k$ is the category of prestacks i.e. functors $\mathrm{Fun}\big(\mathbf{CAlg}_k,\mathbf{Spc}),$ where $\mathbf{CAlg}_k:=\mathbf{CAlg}(\mathbf{Vect}_k)$ is the $(\infty,1)$-category of commutative algebras in $\mathbf{Vect}_k.$

    \item Denote by $(-)^{\simeq}:\mathbf{Cat}_{\infty}\rightarrow \mathbf{Spc},$ the functor which takes the maximal $\infty$-subgroupoid of an $\infty$-category.

    \item For any $(\infty,1)$-category $\mathbf{C}$ and an operad $\mathcal{O},$ we denote by 
$\mathbf{Alg}_{\mathcal{O}}[\mathbf{C}],$
the category of $\mathcal{O}$-algebras in $\mathbf{C},$ e.g. $\mathbf{CAlg}_{k}:=\mathbf{Alg}_{\mathcal{C}\mathrm{omm}}[\mathbf{Vect}_k].$

\item Global sections $\Gamma(-,\mathcal{O}):\mathbf{AffSch}_{S}^{op}\rightarrow \mathbf{QCoh}(S)^-$ is symmetric monoidal with respect to the co-Cartesian symmetric monoidal structure on $\mathbf{AffSch}_{S}^{op}$ and the usual tensor product on $\mathbf{QCoh}(S).$ 

\item $\mathbf{IndAffSch}_{S}\hookrightarrow \mathbf{PStk}_S$ is the $\infty$-category of ind-affine ind-schemes over $S$. Right Kan extension of global sections defines a functor
$$Kan^R_{\mathbf{AffSch}_{S}^{op}\hookrightarrow \mathbf{IndAffSch}_{S}^{op}}(\Gamma(-,\mathcal{O})):\mathbf{IndAffSch}_{S}^{op}\rightarrow \mathbf{Pro}\big(\mathbf{QCoh}(S)^-\big),$$
denoted $\Gamma(-,\mathcal{O})^{\mathrm{Top}}.$ This extends to arbitrary prestacks $Z$, in the usual way e.g. by noting that 
$\mathbf{IndAffSch}_{Z}\simeq \underset{S\rightarrow Z}{\mathrm{lim}}\hspace{1mm}\mathbf{IndAffSch}_{S}.$
\end{itemize}
\end{notate}
For a coherent $\D$-module with good filtration, recall $\mathrm{Gr}^F(\mathcal{M})$ is coherent graded $\mathrm{Gr}^F(\mathcal{D}_X)$-module. 
    
    Since $\mathrm{Gr}^F(\mathcal{D}_X)\subset \pi_*^{X}\mathcal{O}_{T^*X}$, via the monomorphism $\mathrm{Sym}_{\mathcal{O}_X}^*(\Theta_X)\hookrightarrow \pi_*^X\mathcal{O}_{T^*X}$
we have $\pi_X^{-1}\mathrm{Gr}^F(\mathcal{D}_X)\subset \mathcal{O}_{T^*X}$ and $\mathcal{O}_{T^*X}$ is faithfully flat over $\pi_X^{-1}\mathrm{Gr}^F(\mathcal{D}_X),$ with $\pi_X:T^*X\rightarrow X$ the canonical projection.

In `microlocal' coordinates $(x;\xi)$ on $T^*X,$ set $\mathcal{O}_{T^*X}(p)$ to be the sheaf of holomorphic functions on $T^*X$ which are $\xi$-polynomial and homogeneous of degree $p.$
In particular, we have canonical identifications
$$\mathrm{Gr}_0^{\mathrm{F}}\mathcal{D}_X\simeq \mathcal{O}_X\simeq \pi_*^X\big(\mathcal{O}_{T^*X}(0)\big),\hspace{2mm}\text{ and }\hspace{1mm} \mathrm{Gr}_1^{\mathrm{F}}\mathcal{D}_X\simeq \Theta_X\simeq \pi_*^X\big(\mathcal{O}_{T^*X}(0)\big),$$
as $\mathcal{O}_X$-modules.
There is an exact functor
$\widetilde{(-)}$ from $\mathrm{Gr}^F(\mathcal{D}_X)$-modules to $\mathcal{O}_{T^*X},$ modules that we define as $\widetilde{\mathcal{M}}:=\mathcal{O}_{T^*X}\otimes_{\pi_X^{-1}\mathrm{Gr}^F\mathcal{D}_X}\pi_X^{-1}M.$ By \cite[Thm 2.6.]{K} the support
$\mathrm{Supp}\big(\widetilde{\mathrm{Gr}^F(\mathcal{M}}\big)\subset T^*X,$
is independent of the choice of coherent filtration.
For $U\subset X$ and a coherent filtration $F$ on $\mathcal{M}|_{U}$ one has
$$\mathrm{Ch}(\mathcal{M})\cap \pi_X^{-1}(U)=\mathrm{Supp}\big(\mathcal{O}_{T^*U}\otimes_{\pi_X^{-1}Gr^F\mathcal{D}_U}\pi_X^{-1}Gr^F(\mathcal{M}|_U)\big).$$

For open subsets $\Omega\subset T^*X,$ denote for each $p\in\mathbb{Z}_+$ the space of $\mathcal{O}_{T^*X}(p)$-valued sections on $\Omega$ by $\Gamma\big(\Omega,\mathcal{O}_{T^*X}(p)\big),$ and write 
$$\Gamma\big(\Omega,\mathcal{O}_{T^*X}(\star)\big):=\oplus_{p\in\mathbb{Z}}\Gamma\big(\Omega,\mathcal{O}_{T^*X}(p)\big),\hspace{2mm} as \hspace{1mm} \bigoplus
_p\mathcal{O}_{\Omega}[\xi](p).$$
In particular an element is a sequence $F=\{F_p\}$ with $F_p\in \mathcal{O}_{\Omega}[\xi](p)$ homogeneous of degree $p$, written locally around a point $(x_0,\xi_0)$ by $F_p(x_0,\xi_0)=\sum_{|\sigma|=p}f_p^{\sigma}(x_0)\xi_0^{\sigma}$.

We denote by $\mathcal{E}_X$ the sheaf of micro-differential operators on $X.$

A brief recollection from \cite{KSh}, rephrased as in \cite{CFK,CFK2} is now given.
For a closed $\D$-subscheme $\EQ$, we put $\mathcal{C}h^{\EQ}=\oplus_i \mathcal{N}\otimes\mathcal{O}_{T^*X}(i)$, where for $a>0$ the space $\mathcal{C}h_{\geq a}^{\EQ}$ consists of elements of degree at least $a.$ For all $a\leq b,$ we had set $\mathcal{C}h_{[a,b]}^{\EQ}:=\mathcal{C}h_{\geq a}^{\EQ}/\mathcal{C}h_{\geq b}^{\EQ}$.

The $\D$-Hilbert scheme of $\D$-ideal sheaves is related to a scheme of graded ideals in a graded commutative algebra $A,$ studied in \emph{loc.cit.}
We take $A$ to be $\mathcal{O}_{T^*(X,Z)}$ or $\mathbb{P}\mathcal{O}_{T^*(X,Z)}:=\mathcal{O}_Z\otimes_{\mathcal{O}_X}\mathcal{O}_{\mathbb{P}T^*X},$ as needed.

The scheme of ideals is constructed as a graded $\mathcal{O}_{T^*(X,Z)}$-Grassmannian. It is related to the $\D$-Hilbert scheme and is in fact isomorphic precisely due to involutivity - the $\D$-Hilbert polynomials carry the same paramaterizing datum.
Fixing a numerical polynomial $P$ we are considering sub-sheaves of characteristic $\D$-modules with $\D$-Hilbert polynomial $P$:
$$Sub^{P}(\EQ):=\big\{\mathcal{N}_0\subseteq \mathcal{C}h^{\EQ}|P_{\mathcal{N}_0}=P\big\}.$$
We then set
$$Gr_{\mathcal{D},\mathcal{O}_{T^*(X,\EQ)}}\big(P,\mathcal{C}h\big):=\prod_{i}Gr_{\mathcal{D}}\big(P(i),\mathcal{C}h_{i}\big).$$

It is equivalently described via $\mathcal{A}[\mathcal{E}_X]$-submodules with fixed $\D$-Hilbert polynomial. Following the conventions of \cite{CFK2}, to a (graded) associative algebra $A$ and a left $A$-module $M,$ one has
$Gr_{A}(k,M)\subset Gr(k,M),$
whose scheme of left ideals is $J(k,A):=Gr_{A}(k,A),$ which for our purposes will apply to commutative algebras $A$, thus describes the scheme of two-sided ideals of $A$ whose quotients $A/I$ are again commutative. 

In this case, we obtain
\begin{equation}
    \label{eqn: Microlocal Ideal Scheme}
J_{\mathcal{D}}(k,\mathcal{O}_{T^*(X,Z)}):=Gr_{\mathcal{D},\mathcal{O}_{T^*(X,Z)}}(k,\mathcal{O}_{T^*(X,Z)}).
\end{equation}
A point $[\mathcal{J}]\in J_{\mathcal{D}}(k,\mathcal{O}_{T^*(X,Z)})$ corresponds to a finitely (co)generated graded $\mathcal{O}_{T^*(X,Z)}$-submodule 
$\mathcal{J}\hookrightarrow \mathcal{A}^{\ell}\otimes_{\mathcal{O}_X}\mathcal{O}_{T^*X}.$ 

\begin{prop}
    \label{prop: D-Hilb and Microlocal Ideal Scheme}
    Fix a $\D$-smooth commutative $\D$-algebra of jets $\mathcal{A}$. Then, for $a\leq b$ there exists an isomorphism of schemes,
$$\beta_{[a,b]}:Hilb^P(\mathrm{Spec}_{\mathcal{D}}(\mathcal{A})\times T^*X)\xrightarrow{\simeq} J_{\mathcal{D}}(P,\mathcal{O}_{T^*(X,\mathcal{A})}[a,b]).$$
    Moreover, the functors
    $\underline{\mathcal{H}ilb}_{\mathcal{D}}^{P}(\mathrm{Spec}_{\mathcal{D}}(\mathcal{A})\big)$ and $\underline{Gr}_{\mathcal{D},\mathcal{O}_{T^*(X,\mathcal{A})}}(P,\mathcal{O}_{T^*(X,\mathcal{A})}),$
   are equivalent when restricted to
    $\mathrm{CAlg}_{\mathcal{D}_X}^{inv}\subset \mathrm{CAlg}_X(\mathcal{D}_X).$
\end{prop}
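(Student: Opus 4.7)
The plan is to construct $\beta_{[a,b]}$ via the symbol/characteristic functor, show it is a scheme isomorphism using involutivity in an essential way, and then upgrade this pointwise isomorphism to an equivalence of functors by verifying compatibility with base change.

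First, I would define $\beta_{[a,b]}$ on $T$-valued points. A $T$-point of $\mathrm{Hilb}^P\bigl(\mathrm{Spec}_{\D}(\A)\times T^*X\bigr)$ is a $\D$-ideal sheaf $\I\subset \A\otimes_{\mathcal{O}_X}\mathcal{O}_{T^*X}$ flat over $T$ with prescribed $\D$-Hilbert polynomial $P$. Using the order filtration $F_\bullet$ on $\D_X$, form the symbol module $\mathrm{Gr}^F(\I)$, which is a coherent graded $\mathrm{Gr}^F(\D_X)$-module, and apply the exact functor $\widetilde{(-)}$ to pass to a finitely generated graded sub-$\mathcal{O}_{T^*(X,\A)}$-module of $\mathcal{C}h^{\EQ}$. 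Truncating to degrees in $[a,b]$ produces a point of $J_{\D}(P,\mathcal{O}_{T^*(X,\A)}[a,b])$. Checking flatness in families and compatibility with open base change shows this assembles into a morphism of schemes.

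Next, I would construct the inverse. Given a graded ideal $\mathcal{J}\subset \mathcal{O}_{T^*(X,\A)}[a,b]$ with Hilbert polynomial $P$, invoke Goldschmidt's theorem (and the Cartan--Kuranishi prolongation theorem) to lift $\mathcal{J}$ to a $\D$-ideal $\I\subset \A\otimes\mathcal{O}_{T^*X}$ via its full prolongation inside $\mathrm{Spec}_{\D}(\A)\times T^*X$. The essential point is that when $\A$ is restricted to the involutive locus $\mathrm{CAlg}_{\D_X}^{inv}$, the symbol uniquely determines the $\D$-ideal in a bounded range of orders, so the assignment $\mathcal{J}\mapsto \I$ is well-defined and, by the independence of the characteristic variety from the choice of coherent filtration recalled above, inverse to $\beta_{[a,b]}$. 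One then verifies the two maps are scheme morphisms by checking universality against the associated representable functors defined in \cite{KSh}.

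Third, to match the Hilbert polynomials one uses the involutivity-dependent identity $P_{\I}=P_{\mathrm{Gr}^F(\I)}$ established in the previous paper. This equates the $\D$-Hilbert polynomial of $\A/\I$ with the ordinary Hilbert polynomial of its characteristic module, so the two numerical invariants parameterizing either side of $\beta_{[a,b]}$ agree.

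For the functorial statement, one observes that for any morphism $\A\to \A'$ in $\mathrm{CAlg}_{\D_X}^{inv}$, pullback of $\D$-ideals is compatible with pullback of their symbols (since $\widetilde{(-)}$ is exact and $\mathcal{O}_{T^*X}$ is faithfully flat over $\pi_X^{-1}\mathrm{Gr}^F\D_X$), so the assignment is compatible with the base-change structure on both $\underline{\mathcal{H}ilb}_{\D}^P$ and $\underline{Gr}_{\D,\mathcal{O}_{T^*(X,\A)}}$. The main obstacle I anticipate is the inverse construction: one must show that prolongation of a graded $\mathcal{O}_{T^*(X,\A)}$-ideal yields a genuine $\D$-ideal in families (not just pointwise), which reduces to controlling the vanishing of Spencer $\delta$-cohomology uniformly over the base $T$. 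This is where the restriction to involutive $\D$-algebras is indispensable, as outside this locus the prolongation tower need not stabilize and the inverse map may fail to exist.
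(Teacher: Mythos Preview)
The paper does not actually prove this proposition here: it appears in the ``Conventions and notations'' subsection as a recollection from \cite{KSh} (Part I), with no proof given. The only indication of method is the preceding sentence, ``It is related to the $\D$-Hilbert scheme and is in fact isomorphic precisely due to involutivity --- the $\D$-Hilbert polynomials carry the same parameterizing datum.''

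Your strategy --- pass to symbol modules, use involutivity to match Hilbert polynomials and to guarantee the symbol determines the ideal, then reconstruct via prolongation --- is consistent with that hint and is a reasonable outline. Two points to sharpen. First, the map $\beta_{[a,b]}$ concerns $\mathrm{Hilb}^P$ of $\mathrm{Spec}_{\D}(\A)\times T^*X$, i.e.\ subschemes already living in the cotangent direction, with target the \emph{truncated} ideal scheme $J_{\D}(P,\mathcal{O}_{T^*(X,\A)}[a,b])$; this is the $\D$-analog of the Gotzmann-type isomorphism $\mathrm{Hilb}^P(\mathrm{Proj}\,A)\simeq J(P,A_{[a,b]})$ used in \cite{CFK2}, so the correct input for this half is a $\D$-geometric Gotzmann persistence/regularity bound (choosing $a,b$ large enough), not prolongation of a PDE. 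Second, your prolongation-based inverse is the right mechanism for the \emph{second} assertion --- the equivalence of $\underline{\mathcal{H}ilb}_{\D}^P(\mathrm{Spec}_{\D}(\A))$ with the graded Grassmannian over $\mathrm{CAlg}_{\D_X}^{inv}$ --- where stabilization of the Spencer tower is exactly what makes the symbol-to-$\D$-ideal map well defined in families. Keeping these two halves separate will make the argument cleaner and will isolate precisely where the involutivity hypothesis enters.
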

Since any $\mathcal{O}_{T^*(X,\mathcal{A}^{\ell})}$-graded submodule is automatically a graded $\mathcal{O}(T^*(X,\mathcal{A}))_{[a,b]}$-submodule, we have a canonical embedding 
$$\varphi_{[a,b]}:Gr_{\mathcal{D},\mathcal{O}_{T^*(X,Z)}}(k;\mathcal{O}(T^*(X,Z))_{[a,b]})\hookrightarrow J_{\mathcal{D}}(k,\mathcal{O}(T^*(X,Z))_{[a,b]}).$$
These results will be used to realize a single underlying classical finite-type $\D$-scheme as the classical truncation of a dg-$\D$-manifold obtained as a simplicial diagram of dg-schemes modulo actions of Lie pseudogroups. The latter is exactly the $\D$-Quot dg-manifold. This paper is devoted to its construction.

\section{Derived algebraic $\D$-geometry}
\label{sec: Derived D Geometry}
We recall the notions of hypersheaves valued in simplicial sets with respect
to a natural $\D$-étale topology on derived $\D$-algebras (see Subsect. \ref{sssec: D-étale}).

\subsection{Background on derived $\D$-geometry}
\label{ssec: Background on derived D-geometry}

For any complex $\mathcal{M}^{\bullet}$ with a $\mathcal{D}_X$-linear differential, denote the cohomology spaces $\mathcal{H}_{\mathcal{D}}^*(\mathcal{M}).$
In particular, for an affine derived $\D$-scheme $\mathbf{Spec}_{\mathcal{D}}(\mathcal{A}^{\bullet})$ one has that $\mathcal{H}_{\mathcal{D}}^0(\mathcal{A})$ is a sheaf of classical commutative $\D$-algebras on $X$ and $\mathcal{H}_{\mathcal{D}}^{-i}(\mathcal{A}^{\bullet})\in \mathbf{Mod}(\mathcal{H}_{\mathcal{D}}^0(\mathcal{A})\otimes_{\mathcal{O}_X}\mathcal{D}_X),i\geq 0.$ 

\begin{defn}
\normalfont
A \emph{dg-$\D$-algebra on $X$} is a sheaf of differential graded commutative $\D_X$-algebras, $\mathcal{A}^{\bullet}:=(\mathcal{A}^{\sharp},d_{\mathcal{A}}),$ whose cohomologies with respect to the internal cohomological differential $d_{\mathcal{A}}$, are denoted by $\mathcal{H}_{\D}^{i}(\A^{\bullet}):=H^i(\mathcal{A}^{\bullet},d_{\mathcal{A}})$. An element $a\in\mathcal{A}^{\bullet}$ is said to be of homogeneous degree $i\in\mathbb{Z}$ if $a_i\in \mathcal{A}^{i}\subset \mathcal{A}^{\sharp}.$ 
A \emph{derived $\D$-algebra on $X$} is a dg-$\D$-algebra $\mathcal{A}^{\bullet}$ such that $\mathcal{H}_{\D}^0(\mathcal{A}^{\bullet})$ is an ordinary commutative $\D_X$-algebra on $X$, and such that $\mathcal{H}_{\D}^i(\mathcal{A}^{\bullet})$ are finitely presented (left) $\mathcal{H}_{\D}^0(\mathcal{A}^{\bullet})\otimes_{\mathcal{O}_X}\D_X$-modules, for each $i\in\mathbb{Z}.$ 
\end{defn}
We denote by $\mathbf{CAlg}_X(\D_X)$ the dg-category of dg-$\D$-algebras on $X$ with opposite category denoted by $\mathbf{dAff}_X(\D_X).$ An object $Z$ of $\mathbf{dAff}_X(\D_X)$ will be denoted by $Z=\mathbf{Spec}_{\D_X}(\A^{\bullet}),$ and is called a \emph{derived affine $\D$-schemes (over $X$).}

The graded Leibniz rule and $\D$-linearity of the internal differential  $d_{\mathcal{A}}:\mathcal{A}^{\bullet}\rightarrow \mathcal{A}^{\bullet+1}$ are expressed on homogeneous elements as
\begin{eqnarray}
d_{\mathcal{A}}(a_i\cdot a_j)&=&(d_{\mathcal{A}}a_i)\cdot a_j+(-1)^{i}a_i\cdot (d_{\mathcal{A}}a_j),\hspace{2mm} a_i,a_j\in \mathcal{A}^{\bullet},
\\
d_{\mathcal{A}}\big(\mathcal{D}_{X}\bullet a\big)&=&\mathcal{D}_X\bullet (d_{\mathcal{A}}a), \hspace{1mm} a\in \mathcal{A}^{\bullet}.
\end{eqnarray}
We mainly study derived affine $\D$-schemes which model homotopical thickenings of PDEs viewed as algebraic $\D$-schemes, which possess additional finiteness hypothesis.
\begin{defn}
\normalfont
Let $E$ be a vector bundle on $X$ with $\D$-scheme of algebraic infinite jets $J_X^{\infty}E$, and consider a $\D$-algebra $\mathcal{B}$ defined by a differentially generated $\D_X$-ideal $\mathcal{I}$ i.e. $\mathcal{B}=\mathcal{O}(J_X^{\infty}E)/\mathcal{I}.$ Then a derived $\D$-algebra $\mathcal{B}^{\bullet}$ is said to be a \emph{derived enhancement of $\mathcal{B}$} if its underlying classical $\D$-algebra is isomorphic as a $\D$-algebra to a PDE defined by the ideal $\mathcal{I}$ i.e. there exists a surjective morphism of $\D_X$-algebras 
$\mathcal{O}(J_X^{\infty}E)\rightarrow \mathcal{H}_{\D}^0(\mathcal{B}^{\bullet}),$ and an isomorphism $\mathcal{H}_{\D}^0(\mathcal{B}^{\bullet})\simeq \mathcal{B}.$

\end{defn}
Another class of derived $\D$-enhancements we consider are those for which $\mathcal{H}_{\D}^i(\mathcal{B}^{\bullet})$ are induced vector $\D$-bundles over $J_X^{\infty}E$ e.g. isomorphic to $p^*F\otimes \mathcal{O}(J_X^{\infty}E)[\D],$ for some coherent sheaf or vector bundle $F$ on $X$, pulled back along the structure map $p:J_X^{\infty}E\rightarrow X.$
A $\D$-ideal $\I$ naturally defines a derived $\D$-prestack of solutions by cofibrantly replacing the quotient $\mathcal{B}=\mathcal{O}(J_X^{\infty}E)/\mathcal{I}$ in the category of derived $\D$-algebras under $\mathcal{A}:=\mathcal{O}(J_X^{\infty}E)$, viewed as complex concentrated in degree zero.
Such a replacement is of the form
\begin{equation}
    \label{eqn: QRes}
\mathcal{A}\hookrightarrow \mathcal{A}\otimes\mathbf{Free}(\mathcal{M})\xrightarrow{q}\mathcal{B},
\end{equation}
whose initial morphism $\iota$ is an inclusion given by mapping $a\mapsto a\otimes 1$ and $\mathcal{M}$ is a free graded $\mathcal{D}$-module that is compact and $\mathbf{Free}(\mathcal{M})$ is the free derived $\D$-algebra generated by it. The homotopical functor of points gives a pre-stack of solutions,
\begin{equation}
\label{eqn: DPreStk}
\mathbb{R}\mathbf{Sol}_{\mathcal{D}_X}:\mathrm{Ho}(\mathbf{CAlg}_X(\mathcal{D}_X)_{\mathcal{A}}/)\rightarrow \mathrm{Ho}(\mathbf{SSets}),
\end{equation}
also to be denoted by $\mathbb{R}\mathbf{Spec}_{\mathcal{D}}(\mathcal{B}).$

\subsubsection{Cotangent complexes and characteristics}
 For a dg-$\mathcal{D}$-algebra $\mathcal{A}^{\bullet}$, suppose that its cotangent complex $\mathbb{L}_{\mathcal{A}}$ exists and is perfect as an $\mathcal{A}^{\bullet}\otimes \mathcal{D}_X$-module with microlocalization $\mu\mathbb{L}_{\mathcal{A}}$ (see \cite[7.3]{KSY}). For example, let $\mathcal{I}\subset \mathcal{A}$ be a $\D$-PDE with quotient $\D$-algebra $\mathcal{B}$. Then, 
$$\mathbb{L}_{\mathcal{B}}^{\bullet}\simeq \big[\mathcal{I}/\mathcal{I}^2\rightarrow \Omega_{\mathcal{A}/\mathcal{O}_X}^1\otimes_{\mathcal{A}}\mathcal{B}\big],$$
is a complex of $\mathcal{A}\otimes\D_X$-modules
concentrated in degrees $-1,0.$

Considering the pull-back along the cotangent projection,
\begin{equation}
    \label{eqn: DerivedChar}
\mathrm{Char}_{\mathcal{D}}(\mathcal{A}^{\bullet}):=\mathrm{supp}(\mu \mathbb{L}_{\mathcal{A}}) \subset \mathbf{Spec}_{\mathcal{D}}(\mathcal{A})\times_X T^*X.
\end{equation}
We refer to (\ref{eqn: DerivedChar}) as the characteristic variety of the derived $\D$-algebra. With the appropriate finiteness hypothesis, we have a derived bi-duality via the local (that is, $\mathcal{A}[\mathcal{D}]$) Verdier dual $\mathbb{D}^{loc}:=\mathbb{R}\mathcal{H}om_{\mathcal{A}[\mathcal{D}}(-,\mathcal{A}[\mathcal{D}]),$ and one may equivalently work with the characteristic variety in terms of left $\mathcal{A}[\mathcal{D}]$-dual e.g. the twisted dual $\mathbb{T}_{\mathcal{A}}\otimes \omega_X^{\otimes -1},$ (explained further below).

Let $\pi:T^*X\rightarrow X$ be the natural projection and let $\mathcal{E}_X$ be the sheaf of microdifferential
operators on $T^*X.$
It is convenient to introduce the functor 
\begin{equation}
    \label{eqn: MicroLocalVerdier}
    \mathbb{D}^{\mu loc}:=\mathbb{R}\mathcal{H}om_{\pi^*\mathcal{A}[\mathcal{E}]}(-,\pi^*\mathcal{A}[\mathcal{E}]),
    \end{equation}
and call it the `micro' local Verdier dual i.e. $\mathcal{A}[\mathcal{E}]$-dual.

Finally, if $\varphi:\mathcal{A}\rightarrow \mathcal{O}$ is a solution, given by a morphism of $\D$-algebras, the pull-back $\mathbb{T}_{\mathcal{A},\varphi}^{\ell}$ along $\varphi:X\rightarrow \mathbf{Spec}(\mathcal{A})$ gives a perfect complex of $\D$-modules on $X$. Geometrically, it represents the linearization complex of the non-linear system defined by $\mathcal{A}^{\bullet}.$ 

Pull back $\mu(\mathbb{T}_{\mathcal{A}}^{\ell})$ gives the microlocalization of its linearization. Thus, for each $\varphi$ one obtains the $\D$-module characteristic variety for the derived linearization, to be denoted by $\mathrm{Char}(\mathcal{A}^{\bullet},\varphi):=\mathrm{supp}(\mu \mathbb{T}_{\mathcal{A},\varphi}^{\ell})$.

\subsection{Finiteness properties}
\label{ssec: Finiteness}
Homotopical finiteness via (eventual) co-connectivity is necessary for some constructions and required for most proofs in the $\D$-setting. It is most suitably formulated using the language of ind-coherent sheaves on de Rham stacks $X_{DR}$, following \cite{GR,GR2}. In particular, `admitting a cotangent complex,' means admitting a \emph{pro-cotangent complex}.

Let $S$ be an scheme almost of finite type. We use the following result in several places (e.g. when $S$ is of the form $T\times X_{DR}$, for aft-scheme $T$) so give details of its proof.
\begin{prop}
There exists a symmetric monoidal fully-faithful embedding $\mathbb{D}_S:\mathbf{IndCoh}(S)^{op}\rightarrow \mathbf{Pro}\big(\mathbf{QCoh}(S)^-\big).$
\end{prop}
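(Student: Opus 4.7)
The plan is to build $\mathbb{D}_S$ as a three-step composition: the opposite/pro-completion identity, Serre--Grothendieck duality on coherent sheaves, and the tautological embedding of coherent into bounded-above quasi-coherent sheaves. For $S$ almost of finite type, the Gaitsgory--Rozenblyum formalism provides the presentation $\mathbf{IndCoh}(S)=\mathrm{Ind}(\mathbf{Coh}(S))$ together with a dualizing object $\omega_S\in\mathbf{IndCoh}(S)$ generating an anti-equivalence on the small subcategory
$$\mathbb{D}_S^{\mathrm{Serre}}:\mathbf{Coh}(S)^{op}\xrightarrow{\simeq}\mathbf{Coh}(S),\qquad \mathcal{F}\mapsto \mathbb{R}\mathcal{H}om_{\mathcal{O}_S}(\mathcal{F},\omega_S).$$

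Concretely, I would first invoke the abstract $\infty$-categorical identity $\mathrm{Ind}(\mathbf{C})^{op}\simeq \mathrm{Pro}(\mathbf{C}^{op})$ applied to $\mathbf{C}=\mathbf{Coh}(S)$ to identify $\mathbf{IndCoh}(S)^{op}\simeq \mathrm{Pro}(\mathbf{Coh}(S)^{op})$. I would then postcompose with $\mathrm{Pro}(\mathbb{D}_S^{\mathrm{Serre}})$ to land in $\mathrm{Pro}(\mathbf{Coh}(S))$, and finally with $\mathrm{Pro}$ applied to the tautological inclusion $\mathbf{Coh}(S)\hookrightarrow\mathbf{QCoh}(S)^-$, which is well defined since complexes with bounded coherent cohomology are in particular bounded above. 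Full-faithfulness of $\mathbb{D}_S$ follows because each factor is fully faithful: the first two are equivalences, and $\mathrm{Pro}(\cdot)$ preserves fully-faithful $\infty$-functors.

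The main obstacle is endowing $\mathbb{D}_S$ with a symmetric monoidal refinement. The natural tensor on $\mathbf{IndCoh}(S)$ is the $!$-tensor $\mathcal{F}\stackrel{!}{\otimes}\mathcal{G}:=\Delta_S^!(\mathcal{F}\boxtimes\mathcal{G})$, with unit $\omega_S$, while $\mathbf{Pro}(\mathbf{QCoh}(S)^-)$ inherits a symmetric monoidal structure from $\otimes^L$ on $\mathbf{QCoh}(S)^-$ by the universal property of pro-completion. The key compatibility to establish is the natural isomorphism
$$\mathbb{D}_S^{\mathrm{Serre}}(\mathcal{F}\stackrel{!}{\otimes}\mathcal{G})\simeq \mathbb{D}_S^{\mathrm{Serre}}(\mathcal{F})\otimes^L\mathbb{D}_S^{\mathrm{Serre}}(\mathcal{G}),$$
coherently in $\mathcal{F},\mathcal{G}\in\mathbf{Coh}(S)$. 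This follows from the interchange $\Delta_S^!\leftrightarrow \Delta_S^*$ under Serre duality combined with a Künneth-type formula $\mathbb{D}_{S\times S}^{\mathrm{Serre}}(\mathcal{F}\boxtimes\mathcal{G})\simeq \mathbb{D}_S^{\mathrm{Serre}}(\mathcal{F})\boxtimes\mathbb{D}_S^{\mathrm{Serre}}(\mathcal{G})$, together with the fact that $\mathbb{D}_S^{\mathrm{Serre}}(\omega_S)\simeq\mathcal{O}_S$ matches the monoidal units. Once promoted from a plain natural isomorphism to the full $\infty$-categorical coherence data of a symmetric monoidal functor---the technically delicate point---the construction extends to the pro-completion by the universal property of $\mathrm{Pro}$ as a symmetric monoidal stable $\infty$-category, using that the inclusion $\mathbf{Coh}(S)\hookrightarrow\mathbf{QCoh}(S)^-$ is itself symmetric monoidal, yielding the desired embedding.
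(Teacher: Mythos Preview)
Your construction of the fully-faithful embedding is correct and essentially matches the paper's approach: both factor through the Serre duality equivalence $\mathbf{IndCoh}(S)^{op}\simeq \mathbf{Pro}(\mathbf{Coh}(S))$ and then embed into $\mathbf{Pro}(\mathbf{QCoh}(S)^-)$. The paper phrases the second step via Kan extensions (left along $\mathbf{Coh}(S)\hookrightarrow\mathbf{QCoh}(S)^b$, then right along $\mathbf{QCoh}(S)^b\hookrightarrow\mathbf{QCoh}(S)^-$), while you use $\mathrm{Pro}$ applied to the inclusion directly; these produce the same fully faithful functor.

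There is, however, a genuine gap in your treatment of the symmetric monoidal structure. Your final sentence asserts that the inclusion $\mathbf{Coh}(S)\hookrightarrow\mathbf{QCoh}(S)^-$ is symmetric monoidal for $\otimes^L$, but this fails whenever $S$ is not regular: $\mathbf{Coh}(S)$ is not closed under $\otimes^L$. For instance, on $S=\mathrm{Spec}(k[x]/x^2)$ the residue field $k$ is coherent but $k\otimes^L_{k[x]/x^2}k$ has nonvanishing cohomology in every nonpositive degree. Consequently the compatibility isomorphism $\mathbb{D}_S^{\mathrm{Serre}}(\mathcal{F}\stackrel{!}{\otimes}\mathcal{G})\simeq \mathbb{D}_S^{\mathrm{Serre}}(\mathcal{F})\otimes^L\mathbb{D}_S^{\mathrm{Serre}}(\mathcal{G})$ cannot be read as an identity in $\mathbf{Coh}(S)$; the right-hand side only lives in $\mathbf{QCoh}(S)^-$. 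This breaks your plan of first promoting Serre duality to a symmetric monoidal equivalence on $\mathbf{Coh}(S)$ and then applying $\mathrm{Pro}$ to a symmetric monoidal inclusion.

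The paper avoids this by not factoring the monoidal structure through $\mathbf{Coh}(S)$ at all: it transports $\otimes^!$ to $\mathbf{Pro}(\mathbf{Coh}(S))$ tautologically via the Serre duality equivalence, and separately defines the symmetric monoidal structure on $\mathbf{Pro}(\mathbf{QCoh}(S)^-)$ as the unique one that preserves limits and restricts to the ordinary $\otimes^L$ on $\mathbf{QCoh}(S)^-$. The compatibility of $\gamma_S$ with these two structures is then the content to verify (the paper is itself terse here), and your displayed formula, interpreted in $\mathbf{QCoh}(S)^-$ rather than $\mathbf{Coh}(S)$, is exactly the pointwise check one needs. So your strategy is salvageable, but you must set up the monoidal structure directly on $\mathbf{Pro}(\mathbf{QCoh}(S)^-)$ and drop the claim about the coherent inclusion being monoidal.
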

\begin{proof}
    There is functor $\gamma_S:\mathbf{Pro}\big(\mathrm{Coh}(S)\big)\rightarrow \mathbf{Pro}\big(\mathbf{QCoh}(S)^-\big)$ defined by first viewing an object of the domain as an exact functor $\mathrm{Coh}(S)\rightarrow \mathrm{Vect},$ and sending it to its left-Kan extension along $\mathrm{Coh}(S)\hookrightarrow \mathbf{QCoh}(S)^b$, then right Kan extension along $\mathbf{QCoh}(S)^b\rightarrow \mathbf{QCoh}(S)^-.$ Serre duality gives an auto-equivalence
    $\mathbb{D}_S:\mathbf{IndCoh}(S)^{op}\simeq \mathbf{Pro}(\mathrm{Coh}(S)),$ and composing with $\gamma_S$ gives a fully-faithful functor, denoted the same
    $$\mathbb{D}_S:\mathbf{IndCoh}(S)^{op}\hookrightarrow \mathbf{Pro}\big(\mathbf{QCoh}(S)^-\big).$$
    Endow ind-coh with its usual $\otimes^!$-tensor structure, the induce symmetric monoidal structure on the pro-category is induced by Serre-duality equivalence. It is exact in each variable and preserves limits, but is not continuous. There is an induced unique symmetric monoidal on $\mathbf{Pro}(\mathbf{QCoh}(S)^-)$ preserving limits and restricting to the usual one on $\mathbf{QCoh}(S)^-.$
\end{proof}
We always assume the following finiteness condition.

\begin{defn}{\cite[Definition 6.30]{KSY}}
\normalfont
A $\D$-subscheme $Z\subset q_*(E)$ is \emph{$\D$-finitary} if its solution space $\RS_X(Z)$ is locally almost of finite-type and for each solution i.e. $u_T:T\rightarrow \RS_X(Z)$, pull-back defines $u_T^*\mathbb{L}_{Z/X_{DR}}\in \mathbf{Perf}(T\times X_{DR}),$ for every test scheme $T.$
\end{defn}
In \emph{loc.cit.} we worked with $\mathbf{Pro}\big(\mathbf{QCoh}(T\times X_{DR})^-),$ but restrict to perfect objects for simplicity.
Duality in this context is local Verdier duality
\begin{equation}
    \label{eqn: LocVerd}
\mathbb{D}^{loc}:\mathbf{Perf}(T\times X_{DR})\simeq \mathbf{IndCoh}(T\times X_{DR})^{op},
\end{equation}
and the image of $u_T^*\mathbb{L}_{Z/X_{DR}}$ defines the \emph{tangent $\D$-complex of $Z$}, denoted 
\begin{equation}
\label{eqn: Tangent D}
\mathbb{T}_{Z/X_{DR},u_T}\simeq \mathbb{D}^{loc}(u_T^*\mathbb{L}_{Z/X_{DR}}).
\end{equation}
\begin{rmk}
Functor (\ref{eqn: LocVerd}) takes values in Ind-coh. Thus, (\ref{eqn: Tangent D}) is a `right' $\D$-module. Its `left' version may be restored using the functor $\Upsilon_{X_{DR}}^{-1},$ as in \cite{GR}. 
\end{rmk}
For example, by \cite[Prop. 3.0.1]{KSY2}, together with the necessary hypothesis of being $\D$-finitary, for $Z\rightarrow X_{DR}$, there exists a canonical map $\eta:q^*(Z)\rightarrow X,$ with dg-structure sheaf $\mathcal{O}\big(q^*Z\big)$, whose push-forward is a sheaf of dg-$\mathcal{O}_X$-modules, $\eta_*\mathcal{O}\big(q^*Z\big)$. 
Pulling back the relative cotangent complex under $q^*,$ gives (by \cite[Proposition 3.10]{KSY2}), an object of the $\infty$-category,
 $$\eta_*\mathcal{O}\big(q^*Z\big)-\mathrm{Mod}(\D_X)\simeq \mathrm{Mod}(\mathcal{A}_X\otimes\D_X),$$
where $\mathcal{A}_X:=\eta_*\mathcal{O}(q^*Z).$ Its dual, under (\ref{eqn: LocVerd}) is 
\begin{equation}
    \label{eqn: Derived D Tangent}
Maps\big(q^*\mathbb{L}_{Z/X_{DR}},\eta_*\mathcal{O}\big(q^*Z\big)\otimes_{\mathcal{O}_X}\D_X\big)\otimes \omega_X^{\otimes -1},
\end{equation}
with $\omega_X^{\otimes -1},$ the usual dual (see \cite{HT}).

\begin{defn}
\normalfont
    A derived $\D$-algebra is said to be \emph{$n$-coconnective}, if it is concentrated in cohomological degrees $\geq -n.$ Denote the category by
    $\mathbf{CAlg}_X(\mathcal{D}_X)^{\geq -n}.$ Similarly, $n$-connective $\D$-affine scheme (resp. $\D$-prestacks) are 
    objects of the opposite category $\mathbf{DAff}^{\leq n}$ (resp. category of functors $\mathbf{DAff}^{\leq n}\rightarrow \mathbf{Spc}$). 
\end{defn}

Let $U_{\mathrm{alg}}:\mathbf{CAlg}_X(\mathcal{D}_X)\rightarrow \mathbf{Mod}(\mathcal{D}_X)$ denote the forgetful functor, with similar notation on $\mathbf{CAlg}(\mathcal{D}_X)^{\leq -n}.$
Put $\mathbf{Mod}(\mathcal{O}_X):=\mathbf{QCoh}(X)^{\leq 0},$ and `forget the connection' by $\mathrm{For}_{\mathcal{D}}:\mathbf{Mod}(\mathcal{D}_X)\rightarrow \mathbf{Mod}(\mathcal{O}_X).$

Consider the composite forgetful functor on commutative monoid objects
\begin{equation}
    \label{eqn: Forget D-Alg}
For_{\D}:\mathbf{CAlg}_X(\mathcal{D}_X)\rightarrow \mathbf{CAlg}_X(\mathcal{O}_X):=\mathbf{Alg}_{\mathcal{C}\mathrm{omm}}[\mathbf{Mod}(\mathcal{O}_X)].
\end{equation}
The homotopy-coherent algebraic jet functor corresponds to its left adjoint, $(For_{\D})_{Comm}^L.$
The $\infty$-category of $n$-coconnective derived $\D$-algebras is identified with
\begin{equation}
    \label{eqn: n-coconn}
\begin{tikzcd}
\mathbf{CAlg}_X(\mathcal{D}_X)^{\geq -n}\arrow[d]\arrow[r] & \big(\mathbf{QCoh}(X)^{\leq 0}\cap \mathbf{QCoh}(X)^{\geq -n}\big)\arrow[d]
\\
\mathbf{Alg}_{\mathrm{Comm}}[\mathbf{QCoh}(X_{dR})]\arrow[r] & \mathbf{QCoh}(X).
\end{tikzcd}
\end{equation}

Given a morphism $\mathcal{A}^{\bullet}\rightarrow \mathcal{B}^{\bullet}$ in $\mathbf{CAlg}_X(\mathcal{D}_X)^{\leq -n},$ one says that $\mathcal{B}^{\bullet}$ is $\mathcal{D}_X$ \emph{finitely presented} over $\mathcal{A}$ (i.e. as an $\mathcal{A}^{\bullet}-\mathcal{D}_X$-algebra via $f$) if it is a compact object in 
\begin{equation}
    \label{eqn: CAlgSlice}
\mathbf{CAlg}\big(\mathbf{Mod}_{\mathcal{D}_X}(\mathcal{A})^{\leq -n}\big)\simeq \mathbf{CAlg}_X(\mathcal{D}_X)_{\mathcal{A}/}^{\leq -n},
\end{equation}
for some $n.$

Let us denote by $\mathbf{Perf}_{\mathcal{D}_X}(\mathcal{A})$ the sub $\infty$-category of $\mathbf{Mod}_{\mathcal{D}_X}(\mathcal{A})\simeq \mathbf{Mod}(\mathcal{A}\otimes\mathcal{D}_X)$ consisting of perfect complexes of $\mathcal{A}[\mathcal{D}]$-modules.
\begin{defn}
    \label{defn: D-AFP}
    \normalfont 
A derived $\D$-algebra $\mathcal{B}$ is $\mathcal{D}_X$-\emph{almost finitely presented} (over $\mathcal{A})$ if there is a morphism $f:\mathcal{A}\rightarrow \mathcal{B}$ of derived $\D$-algebras such that
$\tau^{\leq -n}(\mathcal{B})$ is a finitely presented $\tau^{\leq -n}(\mathcal{A})$-algebra object in commutative $\D$-algebras via the induced map $\tau^{\leq -n}(f):\tau^{\leq -n}(\mathcal{A})\rightarrow \tau^{\leq -n}(\mathcal{B}),$ for each $n\in \mathbb{Z}.$ 

\end{defn}
In particular, a derived $\D$-algebra $\mathcal{A}$ is $\mathcal{D}_X$-almost finitely presented if it so for the identity morphism i.e. for every $n$, the object $\tau^{\leq -n}(\mathcal{A})$ is compact in $\mathbf{CAlg}_X(\mathcal{D}_X)^{\leq -n}.$

\begin{obs}
\normalfont 
When $X$ is a smooth (quasi)-projective variety, these objects compactly generate the category under filtered colimits. Such compact derived $\D$-algebras are retracts of derived $\D$-algebras $\mathcal{A}\simeq \mathrm{hocolim}_n \mathcal{A}_n,$ with each $\mathcal{A}_n$ free on a compact $\D$-module $\mathcal{M}_n.$ In the case $\mathcal{M}_n$ is further locally free it is equivalent to an object of the form $\mathrm{ind}_{\mathcal{D}}(E_n)$ for a vector bundle. In this case, such objects have well-defined ranks.
\end{obs}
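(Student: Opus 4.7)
The plan is to leverage the free--forgetful adjunction $\Free \dashv \mathrm{For}_{\D}$ between $\mathbf{Mod}(\D_X)$ and $\mathbf{CAlg}_X(\D_X)$, together with the compact generation of $\D$-modules on smooth quasi-projective varieties. The observation splits naturally into three distinct assertions: compact generation by $\D$-almost finitely presented objects, the cellular retract structure of compact objects, and the identification of locally free generators with induced modules from vector bundles. I would handle these in turn.

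For the first claim, I would establish that $\mathbf{Mod}(\D_X)$ is compactly generated when $X$ is smooth quasi-projective: the induction functor $\mathrm{ind}_{\D}(-) = (-)\otimes_{\mathcal{O}_X}\D_X$ sends compact generators of $\mathbf{QCoh}(X)$ (built from an ample family of line bundles) to a compact generating set of $\D$-modules, with $\D$-affinity of $X$ ensuring this. Since $\mathrm{For}_{\D}:\mathbf{CAlg}_X(\D_X)\to \mathbf{Mod}(\D_X)$ preserves sifted---hence in particular filtered---colimits, its left adjoint $\Free$ carries compact $\D$-modules to compact derived $\D$-algebras. The family $\{\Free(\mathcal{M})\}$ with $\mathcal{M}$ ranging over compact $\D$-modules then compactly generates $\mathbf{CAlg}_X(\D_X)^{\leq -n}$ for every $n$, and combined with \autoref{defn: D-AFP} this yields the first assertion.

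For the second claim, I would apply the small-object argument in the cofibrantly generated model structure on $\mathbf{CAlg}_X(\D_X)^{\leq -n}$ transferred along the monadic adjunction from $\mathbf{Mod}(\D_X)^{\leq -n}$. A cofibrant replacement of any compact object then presents it as a retract of a cell complex $\mathrm{hocolim}_n \mathcal{A}_n$ assembled by attaching cells $\Free(\mathcal{M}_n[k])$ with $\mathcal{M}_n$ compact, which is exactly the stated presentation. The third and fourth claims follow from standard $\D$-module theory: a compact $\D$-module that is locally free of finite rank---i.e., locally isomorphic to $\D_X^{\oplus r}$---descends, via faithfully flat local trivialization, to a uniquely determined vector bundle $E_n$ on $X$, and the equivalence with $\mathrm{ind}_{\D}(E_n)$ together with the invariance of $\mathrm{rk}(E_n)$ provides the advertised numerical rank.

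The main technical obstacle I anticipate is verifying that $\Free$ genuinely preserves compactness over the non-commutative sheaf $\D_X$: the symmetric powers $\mathrm{Sym}_{\D}^k(\mathcal{M})$ involve nontrivial $S_k$-coinvariants on $\mathcal{M}^{\otimes_{\D}k}$, and compactness must be controlled uniformly in $k$ before assembling the infinite direct sum. I would circumvent this by passing to the associated graded along the Hodge filtration, where the inclusion $\mathrm{Gr}^F \D_X \hookrightarrow \mathcal{O}_{T^*X}$ commutativizes the problem and lets compactness be deduced from standard properties of $\mathbf{QCoh}(T^*X)$, then lifted back to the filtered level by a standard filtered-to-graded spectral sequence argument.
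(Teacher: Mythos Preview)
The paper offers no proof for this Observation; it is stated as a remark and the text moves immediately to the next subsection. Your outline therefore cannot be compared against a paper proof, but it is the standard argument one would expect and is essentially correct: compact generation of $\mathbf{Mod}(\D_X)$ via induction from an ample family, preservation of compacts by $\Free$ because the forgetful functor to $\D$-modules preserves filtered colimits, and the small-object/cell-attachment description of cofibrant objects.

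One point of confusion in your final paragraph: the free commutative $\D$-algebra on a $\D$-module $\mathcal{M}$ is $\mathrm{Sym}_{\mathcal{O}_X}(\mathcal{M})$ equipped with the induced $\D$-action, \emph{not} a symmetric algebra formed with $\otimes_{\D_X}$. A commutative $\D_X$-algebra is by definition a commutative $\mathcal{O}_X$-algebra object in $\D$-modules (the $\D$-action satisfying the Leibniz rule), so the relevant tensor powers are $\mathcal{M}^{\otimes_{\mathcal{O}_X} k}$, which carry a natural $\D$-module structure. With this correction your anticipated obstacle evaporates: compactness of $\Free(\mathcal{M})$ follows directly from the adjunction argument you already gave in your second paragraph, and no detour through $\mathrm{Gr}^F\D_X$ or $T^*X$ is needed. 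The Hodge-filtration argument you sketch is not wrong, but it is solving a non-problem here.
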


\subsubsection{Postnikov towers and Koszul duals}
We will need two additional technical ingredients that will be used later (e.g. in Subsect. \ref{sssec: Injectivity locus and the quotient}- \ref{sssec: Computing a colimit}). 

%\begin{rmk}
 %   \normalfont
%We need a kind of Koszul duality to obtain dg-$\D$-stacks from Lie algebra objects in $\D$-modules i.e. taking dg-algebras of functions on an appropriately shifted dual. More precisely, we use chiral Koszul duality \cite{FraGai}. We wont define it here and instead only briefly mention the purpose it will serve later (c.f. eqn. (\ref{eqn: Formal D Functions}) below). 
%\end{rmk}

To each $\D_X$-space with dg-structure sheaf $\mathcal{O}_{Z}^{\bullet}$ associate its $\D$-geometric Postnikov sequence, denoted by the sequence $\{\mathsf{P}_{\leq n}(\mathcal{O}_{Z})\}_{n\geq 0},$ of objects in derived $\D$-algebras under $\mathcal{O}_Z,$ with a sequence
\begin{equation}
\label{eqn: Postnikov}
\mathcal{O}_{Z}^{\bullet}\rightarrow\cdots\rightarrow \mathsf{P}_{\leq n}(\mathcal{O}_{Z}^{\bullet})\rightarrow \mathsf{P}_{\leq n-1}(\mathcal{O}_Z^{\bullet})\rightarrow \cdots\rightarrow \mathsf{P}^0(\mathcal{O}_{Z}^{\bullet})\simeq \mathcal{H}_{\mathcal{D}}^0(\mathcal{O}_Z),
\end{equation}
which satisfies the properties: \begin{enumerate}
    \item For all $i\geq n,$ one has $\mathcal{H}_{\mathcal{D}}^{-i}\big(\mathsf{P}_{\leq n}\mathcal{O}_Z^{\bullet}\big)\simeq 0;$
    \item For $i\leq n$ the natural map $i_{n}:\mathcal{O}_Z\rightarrow \mathsf{P}_{\leq n}\mathcal{O}_Z,$ induces isomorphisms $\mathcal{H}_{\mathcal{D}}^{-i}(\mathcal{O}_Z)\simeq \mathcal{H}_{\mathcal{D}}^{-i}(\mathsf{P}_{\leq n}\mathcal{O}_Z).$
\end{enumerate}

If $Z$ is finitely $\D$-presented $\D$-space, $\mathcal{H}^0(\mathbb{T}_{Z}^{\ell})$ is a vector $\D$-bundle $\Theta_{Z}$ and its local Chevalley-Eilenberg dg-algebra (relative de Rham complex of the $\D$-scheme $Z$ over $X$) is
\begin{equation}
    \label{eqn: CE}
CE(\Theta_Z):=\mathrm{Sym}^{\otimes^!}(\mathbb{D}^{loc}(\Theta_Z)[-1]).
\end{equation}
This makes sense for any Lie algebroid object in (derived) $\D$-spaces with the appropriate finiteness conditions. Thus, given any such dg Lie algebroid $\mathcal{L}$ over a $\D$-space which is perfect and dualizable in the above sense we denote by $\mathcal{Y}_{\mathcal{L}}$ the derived formal stack induced from (\ref{eqn: CE}). Namely, following \cite{H}, it is obtained via a type of (Koszul) duality \cite{FG}, as the spectrum of a certain cocommutative coalgebra in $\D$-modules i.e.
$\mathcal{O}(\mathcal{Y}_{\mathcal{L}})\simeq \mathrm{CE}(\mathbb{D}^{loc}(\mathcal{L})[-1]).$

%This story holds moreover for the $X_{dR}$-point of view on $\D$-spaces and generalized PDEs as in (\ref{eqn: ClassifyingDiagram}). Indeed, letting %$Z$ be a pre-stack
% with a categorical flat connection along $X$ i.e. such that 
%\[
%\begin{tikzcd}
%Z\arrow[d,"\eta"] \arrow[r] & Z_{X_{dR}}\arrow[d,"\eta'"]
%\\
%X\arrow[r,"q"] & X_{dR}
%\end{tikzcd}
%\]
%is a pull-back diagram for some $Z_{X_{dR}}$, with the additional hypothesis that its relative tangent complex exists and is perfect and therefore dualizable, for each flat section $s$ of $\eta$ obtained via pullback of a section $s'$ of $\eta',$ one has 
%$s^*\mathbb{T}_{Z/X}\simeq q^*(s')^*\mathbb{T}_{Z_{X_{dR}}/X_{dR}},$ and in particular, $s^*\mathbb{T}_{Z/X}\otimes \omega_X$ is an ind-coherent sheaf on $X_{dR}$ i.e. a complex of right $\D$-modules. We always make the assumption it is perfect over $\D_X.$

%\begin{ex}
%\normalfont 
%Classically, for a given commutative $\mathcal{O}_X$-algebra $\mathcal{O}_E$ and the $\D$-smooth jet algebra $\mathcal{A}$ one has 
%$\Theta_{\mathcal{A}}\simeq p_{\infty}^*T_{E/X}\otimes_{\mathcal{O}_E}(\mathrm{Jets}_X^{\infty}(\mathcal{O}_E)^{\ell}\otimes_{\mathcal{O}_X}\mathcal{D}_X)$ as a coherent $\mathrm{Jets}_X^{\infty}(\mathcal{O}_E)$-module in $\mathcal{D}_X^{op}$-modules. Global sections of $s^*T_{E/X}$ are given as Rham cohomology of $(s')^*\Theta,$ where $s'$ is the flat section of $\mathrm{Jets}^{\infty}(E),$ and $s$ its pull-back.
%\end{ex}

\subsubsection{Covering families}
\label{sssec: D-étale}
Fix a $\D$-prestack $Z$ with structure sheaf $\mathcal{O}_Z$ and a finite family of (cofibrant, for simplicity) derived $\D$-algebras $\{\mathcal{A}_{\alpha}^{\bullet}\}_{\alpha\in A}$ for some indexing set $A.$ 
A family of morphisms  
$\{f_{\alpha}:\underline{\mathbf{Spec}}_{\mathcal{D}}(\mathcal{A}_{\alpha})\rightarrow Z\}$ is a \emph{$\D$-étale covering of $Z$} if:
\begin{enumerate}
    \item for each $\alpha\in A,$ we have a fiber sequence 
$$\mathbb{L}_{\mathcal{O}_Z}\otimes_{\mathcal{O}_Z}^{\mathbb{L}}\mathcal{A}_i\rightarrow \mathbb{L}_{\mathcal{A}_i}\rightarrow \mathbb{L}_{\mathcal{A}_i/ \mathcal{O}_Z},$$
and subsequent isomorphisms 
$\mathbb{L}_{\mathcal{O}_Z}\otimes_{\mathcal{O}_Z}^{\mathbb{L}}\mathcal{A}_i\xrightarrow{\sim}\mathbb{L}_{\mathcal{A}_i},$
in $Ho\big(\mathbf{Mod}_{\mathcal{D}}(\mathcal{A}_i)\big)$;

\item 
there exists some finite subset $J\subset A$ such that $\{f_j: j\in J\}$ determine conservative functors
$\mathcal{A}_i\otimes_{\mathcal{A}}^{\mathrm{L}}-:Ho\big(\DG(\mathcal{A})\big)\rightarrow Ho\big(\DG(\mathcal{A}_i)\big).$
\end{enumerate}
Note that (2) is more explicitly stated: for every $n<0,$ we have isomorphism $$\mathcal{H}_{\mathcal{D}}^n(\mathcal{A})\otimes \mathcal{H}_{\mathcal{D}}^0(\mathcal{A}_i)\rightarrow \mathcal{H}_{\mathcal{D}}^n(\mathcal{A}_i),$$ with the degree zero part $\mathcal{H}_{\mathcal{D}}^0(f_i):\mathcal{H}_{\mathcal{D}}^0(\mathcal{A})\rightarrow \mathcal{H}_{\mathcal{D}}^0(\mathcal{A}_i)$ satisfying the following universal property: for each (discrete, classical) commutative $\D$-algebra $\mathcal{C}$ and any nilpotent $\D$-ideal $\mathcal{J}\subset \mathcal{C}$, for every morphism $\mathrm{Spec}_{\mathcal{D}}(\mathcal{C})\rightarrow Z^{cl}\simeq \mathrm{Spec}_{\mathcal{D}}\big(\mathcal{H}_{\mathcal{D}}^0(\mathcal{O}_Z)\big),$ the map $\mathcal{H}_{\mathcal{D}}^0(f_{\alpha})$ induces a bijection
\begin{equation}
    \label{eqn: Classical D-étale}
\mathrm{Hom}_{\mathcal{D}}\big(\mathrm{Spec}_{\mathcal{D}}(\mathcal{C}_X^{\ell}),\mathrm{Spec}_{\mathcal{D}}\big(\mathcal{H}_{\mathcal{D}}^0(\mathcal{A}_{\alpha})\big)\rightarrow \mathrm{Hom}_{\mathcal{D}}\big(\mathrm{Spec}_{\mathcal{D}}(\mathcal{C}_{X}^{\ell}/\mathcal{J}),\mathrm{Spec}_{\mathcal{D}}\big(\mathcal{H}_{\mathcal{D}}^0(\mathcal{A}_{\alpha})\big).
\end{equation}

Note that (1) is equivalent to the condition that for every $\alpha\in A,$ we have  $\mathbb{L}_{\mathcal{A}_{\alpha}/\mathcal{A}}\simeq 0.$ The infinitesimal lifting property (\ref{eqn: Classical D-étale}) is simply the condition of being an étale morphism of classical commutative $\D$-algebras on $X.$

\begin{prop}
\label{prop: Zariski substacks}
    Suppose $\mathbb{R}Z$ is a derived $\D$-stack of finite $\D$-presentation and admits a perfect cotangent complex. Let $Z$ be its classical truncation and suppose that $U\hookrightarrow Z$ is an open $\D$-subspace. Then there exists a $\D$-Zariski open derived substack $\mathbb{R}U\hookrightarrow \mathbb{R}Z,$ defined for each derived $\D$-algebra $\mathcal{B}$ by 
    $$\mathbb{R}U(\mathcal{B})\simeq \mathbb{R}Z(\mathcal{B})\times_{Z(\mathcal{H}_{\mathcal{D}}^0(\mathcal{B}))}U(\mathcal{H}_{\mathcal{D}}^0(\mathcal{B})).$$
\end{prop}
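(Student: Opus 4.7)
The plan is to define $\mathbb{R}U$ by the pullback formula stated and then check two things: that it is a derived $\D$-stack, and that the natural map $\mathbb{R}U\hookrightarrow \mathbb{R}Z$ is a Zariski open immersion in the sense of derived $\D$-geometry. First I would observe that the assignment
$$\mathcal{B}\longmapsto \mathbb{R}Z(\mathcal{B})\times_{Z(\mathcal{H}_{\D}^0(\mathcal{B}))}U(\mathcal{H}_{\D}^0(\mathcal{B}))$$
is functorial in $\mathcal{B}\in \mathbf{CAlg}_X(\D_X)$, so it defines a derived $\D$-prestack, and the forgetful map to $\mathbb{R}Z$ together with the canonical truncation $\mathcal{B}\mapsto \mathcal{H}_{\D}^0(\mathcal{B})$ gives the structural morphisms.

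The descent property for the $\D$-étale topology of Subsection~\ref{sssec: D-étale} follows because $(\infty,1)$-limits of hypersheaves are again hypersheaves. Here I use that $\mathbb{R}Z$ is a derived $\D$-stack by hypothesis, that $Z$ and $U$ are classical $\D$-stacks (an open $\D$-subspace of a stack is a stack), and that condition (\ref{eqn: Classical D-étale}) guarantees that a $\D$-étale cover $\{\mathcal{B}\to \mathcal{A}_\alpha\}$ induces an étale cover $\{\mathcal{H}_{\D}^0(\mathcal{B})\to \mathcal{H}_{\D}^0(\mathcal{A}_\alpha)\}$ of the classical truncation. This makes the three functors in the fibre product into compatible $\D$-stacks, so their limit is one too.

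Next I would show that $\mathbb{R}U\hookrightarrow \mathbb{R}Z$ is a $\D$-Zariski open immersion. It is a monomorphism of derived $\D$-prestacks, being the base change of the monomorphism $U\hookrightarrow Z$, and pullbacks of monomorphisms are monomorphisms in the $(\infty,1)$-category of prestacks. To check that it is additionally formally étale, I would use the transitivity fibre sequence of cotangent complexes in the slice $\mathbf{dAff}_{\D_X}/\mathbb{R}Z$. Since $\mathbb{R}U$ is defined by pullback and the cotangent complex commutes with pullbacks of derived $\D$-stacks of finite $\D$-presentation admitting perfect cotangent complexes, one obtains an identification of $\mathbb{L}_{\mathbb{R}U/\mathbb{R}Z}$ with the pullback of $\mathbb{L}_{U/Z}$. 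The latter vanishes because $U\hookrightarrow Z$ is a classical Zariski open immersion, hence $\mathbb{L}_{\mathbb{R}U/\mathbb{R}Z}\simeq 0$ in $\mathbf{Perf}_{\D_X}(\mathcal{O}_{\mathbb{R}U})$.

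The hard part will be carefully controlling the interaction of the $\D$-finitary hypothesis with the pullback construction, so that the perfection of the cotangent complex of $\mathbb{R}Z$ descends to $\mathbb{R}U$ and so that the micro\-local duality (\ref{eqn: LocVerd}) is available on the substack. Once this is in place, the desired openness follows from a standard obstruction-theoretic argument: a nilpotent extension $\mathcal{C}\twoheadrightarrow \mathcal{C}/\mathcal{J}$ of classical commutative $\D$-algebras admits a lift through $\mathbb{R}U\to \mathbb{R}Z$ if and only if it does at the level of truncations by (\ref{eqn: Classical D-étale}), and the vanishing of $\mathbb{L}_{\mathbb{R}U/\mathbb{R}Z}$ promotes this lifting property from the classical setting to arbitrary derived $\D$-algebras $\mathcal{B}$, giving the claimed Zariski openness.
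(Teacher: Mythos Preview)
Your proposal is essentially a reconstruction of the argument behind \cite[Prop.~2.1]{STV}, which is exactly what the paper invokes: its proof consists of a single sentence noting that the statement is the $\D$-analogue of that result and follows immediately from it together with the definition of the $\D$-Zariski topology. So the two routes differ only in that you unpack what the citation contains.

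Two remarks on your unpacking. First, your base-change computation of $\mathbb{L}_{\mathbb{R}U/\mathbb{R}Z}$ is correct but needs one sentence of justification you omit: viewing $U$ and $Z$ as derived $\D$-prestacks by precomposition with $\mathcal{H}_{\D}^0$, the defining formula is literally the fibre product $\mathbb{R}U\simeq \mathbb{R}Z\times_{Z}U$ in derived $\D$-prestacks, taken along the canonical map $\mathbb{R}Z(\mathcal{B})\to Z(\mathcal{H}_{\D}^0(\mathcal{B}))$ induced by $\mathcal{B}\to\mathcal{H}_{\D}^0(\mathcal{B})$; this is the Cartesian square to which the base-change formula for cotangent complexes applies. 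Once that is said, however, the cotangent-complex detour becomes unnecessary: $U\hookrightarrow Z$ remains a Zariski open immersion when regarded as a map of derived $\D$-stacks, and such immersions are stable under pullback, so $\mathbb{R}U\hookrightarrow\mathbb{R}Z$ is open directly. Second, the ``hard part'' you flag---compatibility with the $\D$-finitary hypothesis and availability of (\ref{eqn: LocVerd}) on the substack---is a red herring: neither the finite-$\D$-presentation assumption nor perfection of the cotangent complex plays any role in the construction or in verifying openness, and the microlocal duality is not used here at all. Those hypotheses are carried as ambient standing assumptions for later use, not as ingredients of this proposition.
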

\begin{proof}
This is the $\D$-analog of \cite[Prop. 2.1]{STV}, that we need. It follows immediately from their result and by definition of the $\D$-Zariski topology.
\end{proof}
\subsubsection{Results about flatness}
This subsection collects some technical results used in the construction of the $\D$-Hilbert and Quot dg-schemes. In particular, we give analogs of \cite[Prop. 2.4.1-2.4.3]{CFK}.

\begin{prop}
\label{prop: Tor}
    Let $\mathcal{A}\in \mathbf{CAlg}(\mathcal{D}_X),$ and consider $\DG$-modules $\mathcal{M}^{\bullet},\mathcal{N}^{\bullet},$ over $\mathcal{A}.$ There is a spectral sequence in the abelian category of $\mathcal{D}_X$-modules which converges as:
    $$\mathcal{E}_{i,j}^2=\mathcal{T}or_{\mathcal{H}_{\mathcal{D}}(\mathcal{A})}^i\big(\mathcal{H}_{\mathcal{D}}(\mathcal{M}^{\bullet}),\mathcal{H}_{\mathcal{D}}(\mathcal{N}^{\bullet})\big)\Rightarrow \mathcal{H}_{\mathcal{D}}^{i+j}(\mathcal{M}^{\bullet}\otimes_{\mathcal{A}}^{\Ld}\mathcal{N}^{\bullet}),$$
    where we take the $j$-th homogeneous component of $\mathcal{T}or^i.$
    Moreover, if either $\mathcal{M}^{\bullet}$ or $\mathcal{N}^{\bullet}$ are flat, then there is an isomorphism 
    $$\mathcal{H}_{\mathcal{D}}^{\bullet}(\mathcal{M})\otimes_{\mathcal{H}_{\mathcal{D}}(\mathcal{A})}\mathcal{H}_{\mathcal{D}}^{\bullet}(\mathcal{N})\simeq \mathcal{H}_{\mathcal{D}}^{\bullet}(\mathcal{M}\otimes_{\mathcal{A}}^{\Ld}\mathcal{N}).$$
\end{prop}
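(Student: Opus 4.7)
The plan is to adapt the standard hyper-Tor (Eilenberg--Moore) spectral-sequence argument to the $\mathcal{D}_X$-setting, exploiting the fact that every tensor product over a commutative $\mathcal{D}_X$-algebra $\mathcal{A}$ inherits a compatible $\mathcal{D}_X$-action through the Leibniz rule, so that every step of the construction remains inside $\mathbf{Mod}(\mathcal{A}\otimes \mathcal{D}_X)$. First I would choose a semi-free resolution $\varepsilon:P^{\bullet}\xrightarrow{\sim}\mathcal{M}^{\bullet}$ in the cofibrantly generated model structure on $\DG(\mathcal{A})$ from \cite{KSY}, produced by a transfinite small-object argument applied to the generating cofibrations of $\mathcal{A}[\mathcal{D}]$-modules, so that by definition of the derived tensor product $\mathcal{M}^{\bullet}\otimes^{\Ld}_{\mathcal{A}}\mathcal{N}^{\bullet}\simeq P^{\bullet}\otimes_{\mathcal{A}}\mathcal{N}^{\bullet}$ in the derived category of $\mathcal{D}_X$-modules.

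Next I would equip the total complex $T^{\bullet}:=P^{\bullet}\otimes_{\mathcal{A}}\mathcal{N}^{\bullet}$ with the decreasing filtration induced by the cellular (semi-free) filtration of $P^{\bullet}$; this filtration is exhaustive and bounded below, so the associated spectral sequence converges to $\mathcal{H}^{i+j}_{\mathcal{D}}(\mathcal{M}^{\bullet}\otimes^{\Ld}_{\mathcal{A}}\mathcal{N}^{\bullet})$. Because each associated graded piece $\mathrm{gr}^pP$ is a free $\mathcal{A}[\mathcal{D}]$-module with zero internal differential, the $E^1$-page reduces to a direct sum of shifted copies of $\mathcal{H}_{\mathcal{D}}(\mathcal{N})$, indexed by the components of a free graded $\mathcal{H}_{\mathcal{D}}(\mathcal{A})$-resolution of $\mathcal{H}_{\mathcal{D}}(\mathcal{M})$. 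Taking one further differential then identifies
$$E^2_{i,j}=\mathcal{T}or^i_{\mathcal{H}_{\mathcal{D}}(\mathcal{A})}\big(\mathcal{H}_{\mathcal{D}}(\mathcal{M}^{\bullet}),\mathcal{H}_{\mathcal{D}}(\mathcal{N}^{\bullet})\big)_j,$$
with $j$ the internal cohomological degree carried along by the tensor operation, giving the claimed spectral sequence.

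For the second assertion, when $\mathcal{M}^{\bullet}$ is $\mathcal{A}$-flat (the case of $\mathcal{N}^{\bullet}$ is symmetric), the functor $-\otimes_{\mathcal{A}}\mathcal{M}^{\bullet}$ is exact and hence $\mathcal{M}^{\bullet}\otimes^{\Ld}_{\mathcal{A}}\mathcal{N}^{\bullet}\simeq \mathcal{M}^{\bullet}\otimes_{\mathcal{A}}\mathcal{N}^{\bullet}$; testing the spectral sequence against arbitrary $\mathcal{N}^{\bullet}$ then forces the higher graded Tor groups of $\mathcal{H}_{\mathcal{D}}(\mathcal{M})$ over $\mathcal{H}_{\mathcal{D}}(\mathcal{A})$ to vanish, so the sequence collapses at $E^2$ to yield the desired K\"{u}nneth-type isomorphism. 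The principal technical obstacle is verifying that (i) the model structure on $\DG(\mathcal{A})$ of \cite{KSY} does produce semi-free resolutions whose cellular filtration is bounded below and compatible with the $\mathcal{D}_X$-linearity (so the spectral sequence really converges), and (ii) that the $E^2$-page is genuinely the graded Tor computed over $\mathcal{H}_{\mathcal{D}}(\mathcal{A})$ in $\mathcal{D}_X$-modules rather than some coarser $\mathcal{O}_X$-level Tor. Both are addressed by performing every construction inside $\mathbf{Mod}(\mathcal{A}[\mathcal{D}])$ and using that each $\mathcal{A}[\mathcal{D}]$-free generator restricts on cohomology to an $\mathcal{H}_{\mathcal{D}}(\mathcal{A})$-free generator of the same internal weight.
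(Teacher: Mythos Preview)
The paper does not supply its own proof of this proposition: it is stated without argument, introduced only as the $\mathcal{D}_X$-geometric analog of \cite[Prop.~2.4.1--2.4.3]{CFK}. Your approach via a semi-free resolution and its cellular filtration is exactly the standard Eilenberg--Moore construction underlying the cited result, transported into $\mathbf{Mod}(\mathcal{A}[\mathcal{D}])$; this is correct and is what the paper implicitly defers to.

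One point in your second paragraph deserves tightening. You argue that flatness of $\mathcal{M}^{\bullet}$ makes $-\otimes_{\mathcal{A}}\mathcal{M}^{\bullet}$ exact, and then deduce vanishing of the higher graded Tor of $\mathcal{H}_{\mathcal{D}}(\mathcal{M})$ by ``testing against arbitrary $\mathcal{N}^{\bullet}$.'' This is roundabout and mixes two notions of flatness. In the CFK setting (and hence here), ``flat'' means that $\mathcal{H}_{\mathcal{D}}(\mathcal{M})$ is flat as a graded $\mathcal{H}_{\mathcal{D}}(\mathcal{A})$-module; the collapse at $E^2$ is then immediate, since $\mathcal{T}or^i_{\mathcal{H}_{\mathcal{D}}(\mathcal{A})}\big(\mathcal{H}_{\mathcal{D}}(\mathcal{M}),-\big)=0$ for $i>0$ by definition, and no indirect argument is needed. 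Your detour through exactness of the underived tensor at the complex level is a different (and in general inequivalent) hypothesis, so you should state clearly which flatness you assume and argue directly from it.
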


\begin{prop}
    Let $\mathcal{M}_1^{\bullet},\mathcal{M}_2^{\bullet},\mathcal{M}_3^{\bullet}\in \mathcal{A}-\DG,$ and let $\alpha:\mathcal{M}_2^{\bullet}\rightarrow \mathcal{M}_3^{\bullet}$ be a weak equivalence.
    Then the following hold:
    \begin{enumerate}
        \item If $\mathcal{M}_1^{\bullet}$ is cofibrant, then $id_{\mathcal{M}_1}\otimes \alpha:\mathcal{M}_1^{\bullet}\otimes_{\mathcal{A}}\mathcal{M}_2^{\bullet}\rightarrow \mathcal{M}_1^{\bullet}\otimes_{\mathcal{A}}\mathcal{M}_3^{\bullet}$ is a weak equivalence;

        \item If both $\mathcal{M}_2^{\bullet},\mathcal{M}_3^{\bullet}$ are cofibrant $\mathcal{D}_X$-modules, then $id_{\mathcal{M}_1}\otimes \alpha:\mathcal{M}_1^{\bullet}\otimes_{\mathcal{A}}\mathcal{M}_2^{\bullet}\rightarrow \mathcal{M}_1^{\bullet}\otimes_{\mathcal{A}}\mathcal{M}_3^{\bullet}$ is a weak equivalence.
    \end{enumerate}
\end{prop}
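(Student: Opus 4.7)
The plan is to reduce both parts to the degeneration of the Tor spectral sequence of the previous proposition, combined with a standard cofibrant-replacement argument.

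For part (1), I would first use that any cofibrant object in the (transferred) model structure on $\mathcal{A}-\DG$ is a retract of a semi-free $\mathcal{A}^{\bullet}$-module, and hence $\mathcal{A}^{\bullet}$-flat in the DG sense. Feeding each pair $(\mathcal{M}_1^{\bullet},\mathcal{M}_k^{\bullet})$, $k=2,3$, into the previous proposition, the flatness of $\mathcal{M}_1^{\bullet}$ forces every higher Tor term on the $E^2$-page to vanish, so the spectral sequence collapses and produces a natural Künneth-type isomorphism
$$\mathcal{H}_{\mathcal{D}}^{\bullet}(\mathcal{M}_1\otimes_{\mathcal{A}}\mathcal{M}_k)\simeq \mathcal{H}_{\mathcal{D}}^{\bullet}(\mathcal{M}_1)\otimes_{\mathcal{H}_{\mathcal{D}}(\mathcal{A})}\mathcal{H}_{\mathcal{D}}^{\bullet}(\mathcal{M}_k).$$
Naturality in $\mathcal{M}_k$ and the fact that $\mathcal{H}_{\mathcal{D}}^{\bullet}(\alpha)$ is an isomorphism then immediately give that $id_{\mathcal{M}_1}\otimes\alpha$ is a weak equivalence, proving (1).

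For part (2), I would pick an $\mathcal{A}-\DG$-cofibrant replacement $p:\tilde{\mathcal{M}}_1^{\bullet}\to\mathcal{M}_1^{\bullet}$ and form the evident commutative square whose top row is $id_{\tilde{\mathcal{M}}_1}\otimes\alpha$, whose bottom row is $id_{\mathcal{M}_1}\otimes\alpha$, and whose vertical arrows are $p\otimes id_{\mathcal{M}_i}$, $i=2,3$. Part (1) applied to $\tilde{\mathcal{M}}_1^{\bullet}$ establishes that the top row is a weak equivalence, so by the 2-out-of-3 property it is enough to check that the two vertical arrows $p\otimes_{\mathcal{A}} id_{\mathcal{M}_i}$ are quasi-isomorphisms. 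It is precisely this invariance property that the hypothesis of $\mathcal{D}_X$-cofibrancy on $\mathcal{M}_2^{\bullet}$ and $\mathcal{M}_3^{\bullet}$ is designed to supply.

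The main obstacle is exactly that last step, since $\mathcal{D}_X$-cofibrancy is strictly weaker than $\mathcal{A}^{\bullet}$-flatness, so one cannot simply recycle the collapsing Tor argument of (1). My plan is to filter $\mathcal{M}_i$ by a cellular decomposition witnessing its $\mathcal{D}_X-\DG$-cofibrancy and proceed by transfinite induction on cells, exploiting the fact that the generating cofibrations of the transferred model structure on $\mathcal{A}-\DG$ are of the form $\mathcal{A}^{\bullet}\otimes_{\mathcal{D}_X}j$ for $j$ a generating cofibration of $\mathcal{D}_X-\DG$. On such a free cell $\mathcal{A}^{\bullet}\otimes_{\mathcal{D}_X}N$ one has the adjunction identity $p\otimes_{\mathcal{A}}(\mathcal{A}^{\bullet}\otimes_{\mathcal{D}_X}N)\simeq p\otimes_{\mathcal{D}_X}N$, which is a quasi-isomorphism because the forgetful image of $p$ is a $\mathcal{D}_X$-linear quasi-isomorphism tensored with a $\mathcal{D}_X$-cofibrant (hence $\mathcal{D}_X$-flat) module. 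Passing to the filtered colimit along the cell filtration and combining with part (1) completes the argument for (2).
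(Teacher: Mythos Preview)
Your argument for part (1) has a genuine gap. You claim that cofibrancy of $\mathcal{M}_1^{\bullet}$ forces the higher Tor terms $\mathcal{T}or^{i}_{\mathcal{H}_{\mathcal{D}}(\mathcal{A})}\big(\mathcal{H}_{\mathcal{D}}(\mathcal{M}_1),\mathcal{H}_{\mathcal{D}}(\mathcal{M}_k)\big)$ on the $E^{2}$-page to vanish, but that would require $\mathcal{H}_{\mathcal{D}}(\mathcal{M}_1)$ to be flat over $\mathcal{H}_{\mathcal{D}}(\mathcal{A})$, and cofibrancy does not imply this. A concrete counterexample (ignoring the $\mathcal{D}_X$-decoration, which plays no role here): take $\mathcal{A}=k[x]$ concentrated in degree $0$ and let $\mathcal{M}_1$ be the Koszul complex $k[x]\xrightarrow{x}k[x]$ in degrees $-1,0$. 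This is semi-free, hence cofibrant, yet $\mathcal{H}_{\mathcal{D}}(\mathcal{M}_1)=k$ is not flat over $k[x]$, and indeed the K\"unneth formula you write down fails for $\mathcal{M}_k=k$. So the spectral sequence does \emph{not} collapse, and your mechanism does not prove (1). The paper's route is different and more direct: it records that $\mathcal{M}_1\otimes_{\mathcal{A}}-$ is left Quillen for cofibrant $\mathcal{M}_1$, implicitly using the stronger fact (true in this model category) that tensoring with a cofibrant object preserves \emph{all} weak equivalences. The clean way to justify that last step is the very semi-free filtration you mention but do not exploit: each associated graded piece is free, so tensoring it with the acyclic complex $\mathrm{cone}(\alpha)$ stays acyclic, and one passes up the filtration and to the retract.

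For part (2), your cofibrant-replacement square together with 2-out-of-3 is exactly the paper's argument. The paper simply labels the two vertical arrows $Q\mathcal{M}_1\otimes_{\mathcal{A}}\mathcal{M}_i\to \mathcal{M}_1\otimes_{\mathcal{A}}\mathcal{M}_i$ as weak equivalences without further comment; you correctly isolate this as the nontrivial point and supply the $\mathcal{D}_X$-cell filtration argument reducing to $p\otimes_{\mathcal{D}_X}N$ on free cells. That reduction is the right idea and in fact gives more detail than the paper does here.
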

\begin{proof}
    On one hand, notice that the functor $\mathcal{C}\otimes_{\mathcal{A}}-:\mathcal{A}-\DG\rightarrow \mathcal{A}\DG$ is a left Quillen functor for any cofibrant $\mathcal{A}$-module $\mathcal{C}.$ Indeed, it is easy to see it respects cofibrations as well as trivial cofibrations.
Using the 2 out of 3 property, and the commutative diagram,
\[
\begin{tikzcd}
Q\mathcal{M}_1\otimes_{\mathcal{A}}\mathcal{M}_2\arrow[d,"\simeq"] \arrow[r,"\simeq"] & Q\mathcal{M}_1\otimes_{\mathcal{A}}\mathcal{M}_3\arrow[r,"\simeq"] & \mathcal{M}_1\otimes_{\mathcal{A}}\mathcal{M}_3
    \\
    \mathcal{M}_1\otimes_{\mathcal{A}}\mathcal{M}_2\arrow[urr,"id_{\mathcal{M}_1}\otimes\alpha"]&&
\end{tikzcd}
\]
we see that (2) holds.
\end{proof}
\begin{prop}
    Consider $\mathcal{M}^{\bullet},\mathcal{N}^{\bullet}\in \DG(\mathcal{A}^{\bullet})$ and suppose that $\mathcal{M}^{\bullet}$ is cofibrant. If $\mathcal{H}_{\mathcal{D}}(\mathcal{M})$ is free as a $\mathcal{H}_{\mathcal{D}}(\mathcal{A})[\mathcal{D}_X]$-module, then the natural map $\mathcal{H}_{\mathcal{D}}(\mathcal{M})\otimes_{\mathcal{H}_{\mathcal{D}}(\mathcal{A})}\mathcal{H}_{\mathcal{D}}(\mathcal{N})\rightarrow \mathcal{H}_{\mathcal{D}}(\mathcal{M}\otimes_{\mathcal{A}}\mathcal{N})$ is an isomorphism in $\DG\big(\mathcal{H}_{\mathcal{D}}(\mathcal{A})\big).$
\end{prop}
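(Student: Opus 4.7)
The plan is to deduce this directly from the Tor spectral sequence of Proposition~\ref{prop: Tor} combined with the cofibrancy hypothesis on $\mathcal{M}^{\bullet}$. First I would observe that because $\mathcal{M}^{\bullet}$ is cofibrant in $\DG(\mathcal{A}^{\bullet})$, the functor $\mathcal{M}^{\bullet}\otimes_{\mathcal{A}}(-)$ agrees with its left derived functor, so the canonical comparison
\[
\mathcal{M}^{\bullet}\otimes_{\mathcal{A}}^{\Ld}\mathcal{N}^{\bullet}\xrightarrow{\simeq}\mathcal{M}^{\bullet}\otimes_{\mathcal{A}}\mathcal{N}^{\bullet}
\]
is a weak equivalence of $\DG(\mathcal{A}^{\bullet})$-modules, by the first part of the preceding proposition applied to a cofibrant replacement of $\mathcal{N}^{\bullet}$. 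In particular, $\mathcal{H}_{\mathcal{D}}^{\bullet}\big(\mathcal{M}\otimes_{\mathcal{A}}\mathcal{N}\big)\simeq \mathcal{H}_{\mathcal{D}}^{\bullet}\big(\mathcal{M}\otimes_{\mathcal{A}}^{\Ld}\mathcal{N}\big)$ as graded $\mathcal{D}_X$-modules, so the target of the natural map in the statement is computed by the abutment of the spectral sequence
\[
\mathcal{E}^{2}_{i,j}=\mathcal{T}or^{i}_{\mathcal{H}_{\mathcal{D}}(\mathcal{A})}\bigl(\mathcal{H}_{\mathcal{D}}(\mathcal{M}),\mathcal{H}_{\mathcal{D}}(\mathcal{N})\bigr)_{j}\Rightarrow \mathcal{H}_{\mathcal{D}}^{i+j}\bigl(\mathcal{M}\otimes_{\mathcal{A}}^{\Ld}\mathcal{N}\bigr),
\]
taken in the abelian category of $\mathcal{D}_X$-modules.

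Next I would use the freeness assumption to collapse this spectral sequence. Since $\mathcal{H}_{\mathcal{D}}(\mathcal{M})$ is free as a graded $\mathcal{H}_{\mathcal{D}}(\mathcal{A})[\mathcal{D}_X]$-module, it is in particular flat over $\mathcal{H}_{\mathcal{D}}(\mathcal{A})$ in the category of $\mathcal{D}_X$-modules, so all higher Tor sheaves $\mathcal{T}or^{i}_{\mathcal{H}_{\mathcal{D}}(\mathcal{A})}(\mathcal{H}_{\mathcal{D}}(\mathcal{M}),-)$ vanish for $i\geq 1$. Consequently $\mathcal{E}^{2}_{i,j}=0$ unless $i=0$, the spectral sequence degenerates at the $\mathcal{E}^2$-page, and the edge homomorphism yields the desired isomorphism
\[
\mathcal{H}_{\mathcal{D}}(\mathcal{M})\otimes_{\mathcal{H}_{\mathcal{D}}(\mathcal{A})}\mathcal{H}_{\mathcal{D}}(\mathcal{N})\xrightarrow{\simeq}\mathcal{H}_{\mathcal{D}}\bigl(\mathcal{M}\otimes_{\mathcal{A}}\mathcal{N}\bigr),
\]
which one checks is compatible with the obvious natural map by tracking generators through the free resolution provided by the freeness hypothesis.

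The main obstacle I anticipate is verifying that freeness of $\mathcal{H}_{\mathcal{D}}(\mathcal{M})$ over the \emph{enveloping} object $\mathcal{H}_{\mathcal{D}}(\mathcal{A})[\mathcal{D}_X]$ really implies the vanishing of the Tor groups appearing in the spectral sequence, which are formed in the category of $\mathcal{D}_X$-modules rather than plain $\mathcal{O}_X$-modules. This reduces to the fact that for any graded $\mathcal{H}_{\mathcal{D}}(\mathcal{A})[\mathcal{D}_X]$-module $\mathcal{P}$, the functor $\mathcal{H}_{\mathcal{D}}(\mathcal{A})[\mathcal{D}_X]\otimes_{\mathcal{H}_{\mathcal{D}}(\mathcal{A})}\mathcal{P}$ is exact on short exact sequences of $\mathcal{D}_X$-modules, since $\mathcal{D}_X$ is flat over $\mathcal{O}_X$ and extension of scalars along $\mathcal{H}_{\mathcal{D}}(\mathcal{A})\hookrightarrow \mathcal{H}_{\mathcal{D}}(\mathcal{A})[\mathcal{D}_X]$ preserves filtered colimits and direct sums; hence a free $\mathcal{H}_{\mathcal{D}}(\mathcal{A})[\mathcal{D}_X]$-module is a filtered colimit of finite direct sums of shifts of $\mathcal{H}_{\mathcal{D}}(\mathcal{A})[\mathcal{D}_X]$, each of which is Tor-acyclic against an arbitrary $\mathcal{H}_{\mathcal{D}}(\mathcal{A})$-module in the relevant abelian category. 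Once this flatness transfer is established, the collapse of the spectral sequence is immediate and the remaining identifications are formal.
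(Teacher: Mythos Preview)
The paper does not actually supply a proof of this proposition: it is stated without argument, as one of the ``analogs of \cite[Prop.~2.4.1--2.4.3]{CFK}'' announced at the start of the subsection. So there is no paper proof to compare against; your task is just to check that your argument stands on its own.

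Your approach is the natural one and is essentially correct. The two ingredients---cofibrancy of $\mathcal{M}^{\bullet}$ to replace $\otimes_{\mathcal{A}}$ by $\otimes_{\mathcal{A}}^{\Ld}$, and freeness of $\mathcal{H}_{\mathcal{D}}(\mathcal{M})$ to collapse the Tor spectral sequence of Proposition~\ref{prop: Tor}---are exactly what is needed, and indeed the conclusion is nothing more than the ``Moreover'' clause of Proposition~\ref{prop: Tor} combined with the observation that cofibrant objects compute the derived tensor product. Your final paragraph about the flatness transfer is slightly overcomplicated: the point is simply that $\mathcal{H}_{\mathcal{D}}(\mathcal{A})[\mathcal{D}_X]\simeq \mathcal{H}_{\mathcal{D}}(\mathcal{A})\otimes_{\mathcal{O}_X}\mathcal{D}_X$ is free (in particular flat) over $\mathcal{H}_{\mathcal{D}}(\mathcal{A})$ because $\mathcal{D}_X$ is locally free over $\mathcal{O}_X$, and a direct sum of shifts of a flat module is flat. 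With that simplification the argument is complete.
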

Consider a dg-$\D$ algebra $\mathcal{A}$ and $\mathcal{M}^{\bullet} \in \mathcal{A}^{\bullet}-\DG.$ We say that $\mathcal{M}^{\bullet}$ is $\mathcal{D}_X$-flat (c.f. Proposition \ref{prop: Tor}), if $\mathcal{H}_{\mathcal{D}}^0(\mathcal{M})$ is a flat $\mathcal{H}_{\mathcal{D}}^0(\mathcal{A}^{\bullet}[\mathcal{D}_X]$-module and for every $i$, the natural map 
$$\mathcal{H}_{\mathcal{D}}^i(\mathcal{A}^{\bullet})\otimes_{\mathcal{H}_{\mathcal{D}}^0(\mathcal{A})}\mathcal{H}_{\mathcal{D}}^0(\mathcal{M})\xrightarrow{\simeq} \mathcal{H}_{\mathcal{D}}^i(\mathcal{M}),$$ is an isomorphism.

\begin{prop}
\label{prop: Flat preserves n-truncations}
An object $\mathcal{M}^{\bullet}\in \mathcal{A}^{\bullet}-\DG$ is $\mathcal{D}_X$-flat is equivalent to the functor $\mathcal{M}\otimes_{\mathcal{A}}-$ preserving $n$-truncations and discrete objects.
\end{prop}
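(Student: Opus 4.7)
The plan is to exploit the Tor spectral sequence of Proposition \ref{prop: Tor}, whose $E^2$-page
$$E^2_{i,j}=\mathcal{T}or^{i}_{\mathcal{H}_{\D}(\A)}\bigl(\mathcal{H}_{\D}(\M),\mathcal{H}_{\D}(\N)\bigr)_{j}\Rightarrow \mathcal{H}_{\D}^{i+j}(\M\otimes_{\A}^{\Ld}\N),$$
converts the question of preservation of truncations under $\M\otimes_{\A}-$ into a question about the vanishing of higher Tor groups over the graded $\D$-algebra $\mathcal{H}_{\D}(\A)$, reducing everything to a classical flatness argument relative to $\mathcal{H}_{\D}^0(\A)[\D_X]$.

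For the forward implication, I would first use the second part of the flatness hypothesis, namely the isomorphism $\mathcal{H}_{\D}^i(\A)\otimes_{\mathcal{H}_{\D}^0(\A)}\mathcal{H}_{\D}^0(\M)\xrightarrow{\simeq}\mathcal{H}_{\D}^i(\M)$, to rewrite $\mathcal{H}_{\D}(\M)$ as the scalar extension $\mathcal{H}_{\D}(\A)\otimes_{\mathcal{H}_{\D}^0(\A)}\mathcal{H}_{\D}^0(\M)$. Combined with the fact that $\mathcal{H}_{\D}^0(\M)$ is flat as an $\mathcal{H}_{\D}^0(\A)[\D_X]$-module, a standard change-of-rings argument collapses the $E^2$-page onto the row $i=0$, giving the isomorphism
$$\mathcal{H}_{\D}^{\bullet}(\M\otimes_{\A}^{\Ld}\N)\simeq \mathcal{H}_{\D}(\M)\otimes_{\mathcal{H}_{\D}(\A)}\mathcal{H}_{\D}(\N)\simeq \mathcal{H}_{\D}^0(\M)\otimes_{\mathcal{H}_{\D}^0(\A)}\mathcal{H}_{\D}^{\bullet}(\N).$$
From this explicit form, preservation of discreteness (take $\N$ concentrated in degree $0$) and of $n$-truncations (take $\N$ with $\mathcal{H}_{\D}^{-k}(\N)=0$ for $k>n$) is immediate, because tensoring over $\mathcal{H}_{\D}^0(\A)$ with a flat module is $\D$-exact and leaves the cohomological support unchanged.

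For the converse, I would test the two defining properties of $\D$-flatness on carefully chosen $\N$. Taking $\N=\mathcal{H}_{\D}^0(\A)$, viewed as a discrete $\A^{\bullet}$-module via the canonical augmentation $\A^{\bullet}\to \mathcal{H}_{\D}^0(\A)$, preservation of discreteness forces $\M\otimes_{\A}^{\Ld}\mathcal{H}_{\D}^0(\A)\simeq \mathcal{H}_{\D}^0(\M)$ to live in degree $0$, which by the spectral sequence yields the isomorphism $\mathcal{H}_{\D}^i(\A)\otimes_{\mathcal{H}_{\D}^0(\A)}\mathcal{H}_{\D}^0(\M)\xrightarrow{\simeq}\mathcal{H}_{\D}^i(\M)$. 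Next, testing against arbitrary discrete $\mathcal{H}_{\D}^0(\A)[\D_X]$-modules $\N$ and using preservation of discreteness again, one obtains that $\mathcal{H}_{\D}^0(\M)\otimes_{\mathcal{H}_{\D}^0(\A)}^{\Ld}\N$ is concentrated in degree $0$ for every such $\N$, which is precisely the statement that $\mathcal{H}_{\D}^0(\M)$ is flat over $\mathcal{H}_{\D}^0(\A)[\D_X]$.

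The main technical obstacle I anticipate is keeping track of the two compatible structures on $\mathcal{H}_{\D}(\M)$: its internal grading and its $\D_X$-linearity. One must verify that the collapse of the spectral sequence in step two genuinely takes place in the abelian category of $\mathcal{H}_{\D}^0(\A)[\D_X]$-modules rather than only in $\mathcal{H}_{\D}^0(\A)$-modules, and that the Tor-vanishing argument respects the $\D$-action. This can be arranged by choosing a cofibrant replacement of $\M$ that is simultaneously $\A^{\bullet}$-free and $\D_X$-locally projective, as is possible by the existence of cofibrant generators for $\mathcal{A}-\DG$ underlying the model structure used in Proposition \ref{prop: Tor}; all Tor computations are then carried out with respect to this replacement and inherit $\D$-linearity automatically.
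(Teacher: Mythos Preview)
Your approach via the Tor spectral sequence is a genuine alternative to the paper's Postnikov-tower argument, and the forward direction is cleaner than what the paper writes: once $\mathcal{H}_{\D}(\M)\simeq\mathcal{H}_{\D}(\A)\otimes_{\mathcal{H}_{\D}^0(\A)}\mathcal{H}_{\D}^0(\M)$ with $\mathcal{H}_{\D}^0(\M)$ flat, the change-of-rings collapse of $E^2$ is exactly the content of the second clause of Proposition~\ref{prop: Tor}, and preservation of truncations and of discreteness follow immediately. The paper instead tensors the Postnikov cofibre sequences $\mathcal{H}_{\D}^n(\A)[n]\to\tau_{\le n}\A\to\tau_{\le n-1}\A$ with $\M$ and identifies the resulting fibres; your spectral-sequence route bypasses that induction.

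There is, however, a gap in your converse. Knowing that $\M\otimes_{\A}^{\Ld}\mathcal{H}_{\D}^0(\A)$ is discrete tells you that the spectral sequence with $\N=\mathcal{H}_{\D}^0(\A)$ abuts to something concentrated in total degree $0$, but this does \emph{not} by itself yield the structural isomorphism $\mathcal{H}_{\D}^i(\A)\otimes_{\mathcal{H}_{\D}^0(\A)}\mathcal{H}_{\D}^0(\M)\xrightarrow{\simeq}\mathcal{H}_{\D}^i(\M)$. The relevant $E^2$-column is $E^2_{0,j}=\bigl(\mathcal{H}_{\D}(\M)\otimes_{\mathcal{H}_{\D}(\A)}\mathcal{H}_{\D}^0(\A)\bigr)_j$, i.e.\ the cokernel of multiplication by $\mathcal{H}_{\D}^{<0}(\A)$, and the vanishing of the abutment off degree $0$ only says these cokernels are eventually killed by differentials from higher Tor, not that the map $\mathcal{H}_{\D}^i(\A)\otimes\mathcal{H}_{\D}^0(\M)\to\mathcal{H}_{\D}^i(\M)$ is an isomorphism. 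To close this you must actually use preservation of \emph{truncations}, not just of discreteness: once you have established (from discreteness) that $\mathcal{H}_{\D}^0(\M)$ is flat, tensor the cofibre sequence $\mathcal{H}_{\D}^n(\A)[n]\to\tau_{\le n}\A\to\tau_{\le n-1}\A$ with $\M$; the hypothesis gives $\M\otimes_{\A}\tau_{\le n}\A\simeq\tau_{\le n}\M$, and since $\mathcal{H}_{\D}^n(\A)$ is discrete the fibre is $(\mathcal{H}_{\D}^0(\M)\otimes_{\mathcal{H}_{\D}^0(\A)}\mathcal{H}_{\D}^n(\A))[n]$, which forces it to coincide with $\mathcal{H}_{\D}^n(\M)[n]$. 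This is precisely the step the paper carries out, and it is not replaceable by the single test $\N=\mathcal{H}_{\D}^0(\A)$.
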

\begin{proof}
    Preservation of truncations is equivalently phrased as commuting with truncation functors $\tau^{\leq -n}$.
    For $n$-truncated $\mathcal{N}$, there is a cofibre sequence
    $$\tau_{\geq n}(\mathcal{M}\otimes_{\mathcal{A}}\mathcal{N})\rightarrow \mathcal{M}\otimes_{\mathcal{A}}\tau_{\leq n}(\mathcal{N})\rightarrow \tau_{\leq n}(\mathcal{M}\otimes_{\mathcal{A}}\mathcal{N}).$$
Moreover, there is a bi-Cartesian sequence associated to the $n$-th transition map in the $\D$-Postnikov tower of $\mathcal{A}^{\bullet}$:
\[
\begin{tikzcd}
\mathcal{H}_{\mathcal{D}}^n(\mathcal{A})\arrow[d]\arrow[r] & 0 \arrow[d]
\\
\tau_{\leq n}\mathcal{A}\arrow[r] & \tau_{\leq n-1}\mathcal{A},
\end{tikzcd}
\]
to which we apply $\mathcal{M}\otimes_{\mathcal{A}}(-)$, to get a co-Cartesian square with fiber given by the $n$-th suspension of $\mathcal{M}\otimes_{\mathcal{A}}\mathcal{H}_{\mathcal{D}}^n(\mathcal{A})$:
\[
\begin{tikzcd}
\big(\mathcal{M}\otimes_{\mathcal{A}} \mathcal{H}_{\mathcal{D}}^n(\mathcal{A})\big)[n]\arrow[d]\arrow[r] & 0\arrow[d]
\\
\mathcal{M}\otimes_{\mathcal{A}}\tau_{\leq n}(\mathcal{A})\arrow[r] & \mathcal{M}\otimes_{\mathcal{A}}\tau_{\leq n-1}(\mathcal{A}).
\end{tikzcd}
\]
Since $\mathcal{H}_{\mathcal{D}}^0(-)$ is symmetric monoidal as a functor between the abelian hearts of the associated $t$-structures, we get
$$\mathcal{M}\otimes_{\mathcal{A}}\mathcal{H}_{\mathcal{D}}^n(\mathcal{A})\simeq \mathcal{H}_{\mathcal{D}}^0\big(\mathcal{M}\otimes_{\mathcal{A}}\mathcal{H}_{\mathcal{D}}^n(\mathcal{A})\big)\simeq \mathcal{H}_{\mathcal{D}}^0(\mathcal{M})\otimes_{\mathcal{H}_{\mathcal{D}}^0(\mathcal{A})}\mathcal{H}_{\mathcal{D}}^n(\mathcal{A}).$$
Finally, notice that since $\mathcal{M}\otimes_{\mathcal{A}}(-)$ commutes with $n$-th truncations the latter co-Cartesian square becomes the fiber of the $n$-th transition maps of the $\D$-Postnikov tower of $\mathcal{M}$,
\[
\begin{tikzcd}
\big(\mathcal{H}_{\mathcal{D}}^0(\mathcal{M})\otimes_{\mathcal{H}_{\mathcal{D}}^0(\mathcal{A})}\mathcal{H}_{\mathcal{D}}^n(\mathcal{A})\big)[n]\arrow[d]\arrow[r] & 0\arrow[d]
\\
\tau_{\leq n}(\mathcal{M})\arrow[r] & \tau_{\leq n-1}(\mathcal{M}).
\end{tikzcd}
\]
\end{proof}

\begin{prop}
\label{prop: D-flat result}
    A morphism $f:\mathcal{A}\rightarrow \mathcal{B}$ of derived $\D$-algebras is $\D$-flat if and only if $\mathcal{B}\otimes_{\mathcal{A}}^{\mathbb{L}}\mathcal{H}_{\mathcal{D}}^0(\mathcal{A})$ is weakly-equivalent to a discrete flat $\mathcal{H}_{\mathcal{D}}^0(\mathcal{A})$-algebra in $\D_X$-modules.
\end{prop}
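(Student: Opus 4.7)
The plan is to adapt to the $\D$-setting the standard fact from derived algebraic geometry that flatness of a morphism of connective algebras is detected by base-changing to $\pi_0$ and checking that the result is discrete and flat. The two main tools will be the Tor spectral sequence of Proposition \ref{prop: Tor} and an induction along the $\D$-geometric Postnikov tower (\ref{eqn: Postnikov}) of $\mathcal{A}$.

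For the forward direction, I would assume $f$ is $\D$-flat and apply Proposition \ref{prop: Tor} with $\mathcal{M}^{\bullet}=\mathcal{B}$ and $\mathcal{N}^{\bullet}=\mathcal{H}_{\mathcal{D}}^0(\mathcal{A})$. Flatness collapses the spectral sequence to an isomorphism
$$\mathcal{H}_{\mathcal{D}}^{\bullet}\bigl(\mathcal{B}\otimes_{\mathcal{A}}^{\mathbb{L}}\mathcal{H}_{\mathcal{D}}^0(\mathcal{A})\bigr)\simeq \mathcal{H}_{\mathcal{D}}^{\bullet}(\mathcal{B})\otimes_{\mathcal{H}_{\mathcal{D}}^{\bullet}(\mathcal{A})}\mathcal{H}_{\mathcal{D}}^0(\mathcal{A}).$$
Using the $\D$-flatness identifications $\mathcal{H}_{\mathcal{D}}^i(\mathcal{B})\simeq \mathcal{H}_{\mathcal{D}}^i(\mathcal{A})\otimes_{\mathcal{H}_{\mathcal{D}}^0(\mathcal{A})}\mathcal{H}_{\mathcal{D}}^0(\mathcal{B})$, a direct computation in graded $\mathcal{D}_X$-modules shows the right-hand side collapses to $\mathcal{H}_{\mathcal{D}}^0(\mathcal{B})$ concentrated in degree $0$: each generator in cohomological degree $i<0$ lies in the image of the augmentation ideal $\bigoplus_{i<0}\mathcal{H}_{\mathcal{D}}^i(\mathcal{A})$ acting on $\mathcal{H}_{\mathcal{D}}^0(\mathcal{B})$, and the degree-$0$ piece is unchanged. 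This exhibits the base change as a discrete, flat $\mathcal{H}_{\mathcal{D}}^0(\mathcal{A})$-algebra by hypothesis.

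For the converse, suppose $\mathcal{B}\otimes_{\mathcal{A}}^{\mathbb{L}}\mathcal{H}_{\mathcal{D}}^0(\mathcal{A})$ is weakly equivalent to a discrete flat $\mathcal{H}_{\mathcal{D}}^0(\mathcal{A})$-algebra. Applying $\mathcal{H}_{\mathcal{D}}^0$ identifies this with $\mathcal{H}_{\mathcal{D}}^0(\mathcal{B})$, so the flatness of $\mathcal{H}_{\mathcal{D}}^0(\mathcal{B})$ over $\mathcal{H}_{\mathcal{D}}^0(\mathcal{A})[\mathcal{D}_X]$ is immediate. The higher relation I will obtain by induction along the $\D$-Postnikov tower (\ref{eqn: Postnikov}) of $\mathcal{A}$. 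Applying $\mathcal{B}\otimes_{\mathcal{A}}^{\mathbb{L}}(-)$ to the transition cofibre sequence
$$\mathcal{H}_{\mathcal{D}}^{-n-1}(\mathcal{A})[n+1]\longrightarrow \mathsf{P}_{\leq n+1}(\mathcal{A})\longrightarrow \mathsf{P}_{\leq n}(\mathcal{A}),$$
and using that $\mathcal{H}_{\mathcal{D}}^{-n-1}(\mathcal{A})$ is already an $\mathcal{H}_{\mathcal{D}}^0(\mathcal{A})$-module, one factors the fibre term as
$$\mathcal{B}\otimes_{\mathcal{A}}^{\mathbb{L}}\mathcal{H}_{\mathcal{D}}^{-n-1}(\mathcal{A})\simeq \bigl(\mathcal{B}\otimes_{\mathcal{A}}^{\mathbb{L}}\mathcal{H}_{\mathcal{D}}^0(\mathcal{A})\bigr)\otimes^{\mathbb{L}}_{\mathcal{H}_{\mathcal{D}}^0(\mathcal{A})}\mathcal{H}_{\mathcal{D}}^{-n-1}(\mathcal{A})\simeq \mathcal{H}_{\mathcal{D}}^0(\mathcal{B})\otimes_{\mathcal{H}_{\mathcal{D}}^0(\mathcal{A})}\mathcal{H}_{\mathcal{D}}^{-n-1}(\mathcal{A}),$$
invoking the hypothesis and flatness of $\mathcal{H}_{\mathcal{D}}^0(\mathcal{B})$. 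Reading the long exact sequence in $\mathcal{H}_{\mathcal{D}}^{*}$ and commuting derived tensors with the truncation functors via Proposition \ref{prop: Flat preserves n-truncations} yields the desired isomorphism $\mathcal{H}_{\mathcal{D}}^{-n-1}(\mathcal{B})\simeq \mathcal{H}_{\mathcal{D}}^{-n-1}(\mathcal{A})\otimes_{\mathcal{H}_{\mathcal{D}}^0(\mathcal{A})}\mathcal{H}_{\mathcal{D}}^0(\mathcal{B})$ at each stage.

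The main obstacle I anticipate is the careful bookkeeping of $\mathcal{D}_X$-module structures during the inductive step, in particular verifying that the factorization of $\mathcal{B}\otimes_{\mathcal{A}}^{\mathbb{L}}\mathcal{H}_{\mathcal{D}}^{-n-1}(\mathcal{A})$ through $\mathcal{H}_{\mathcal{D}}^0(\mathcal{A})$ is functorial with respect to the $\mathcal{A}[\mathcal{D}_X]$- versus $\mathcal{H}_{\mathcal{D}}^0(\mathcal{A})[\mathcal{D}_X]$-actions, and confirming that the discreteness assumption on $\mathcal{B}\otimes_{\mathcal{A}}^{\mathbb{L}}\mathcal{H}_{\mathcal{D}}^0(\mathcal{A})$ propagates correctly under each Postnikov truncation. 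Once these compatibilities are secured, the inductive assembly proceeds in parallel with the classical connective $E_\infty$-ring case.
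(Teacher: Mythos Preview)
Your proposal is correct and follows essentially the same approach as the paper: the paper's proof invokes exactly Propositions \ref{prop: Tor} and \ref{prop: Flat preserves n-truncations} and then sketches only the converse direction via the degenerating spectral sequence $\mathcal{H}_{\mathcal{D}}^p\bigl(f^*\mathcal{H}_{\mathcal{D}}^q(\mathcal{A})\bigr)\Rightarrow \mathcal{H}_{\mathcal{D}}^{p+q}(\mathcal{B})$, which is precisely the spectral-sequence packaging of your Postnikov-tower induction. One small point of phrasing: when you write ``commuting derived tensors with the truncation functors via Proposition \ref{prop: Flat preserves n-truncations}'' in the converse, be careful not to invoke the \emph{statement} of that proposition (which would be circular) but rather the bi-Cartesian squares from its \emph{proof}; what you actually need there is only the connectivity estimate that the fibre of $\mathcal{A}\to\mathsf{P}_{\leq n}(\mathcal{A})$ is $(n{+}1)$-connective, hence so is its derived tensor with $\mathcal{B}$.
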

\begin{proof}
Follows from Proposition \ref{prop: Tor} and Proposition \ref{prop: Flat preserves n-truncations} via spectral-sequence arguments. That is if $f^*\mathcal{H}_{\mathcal{D}}^0(\mathcal{A})\simeq \mathcal{H}_{\mathcal{D}}^0(\mathcal{B})$ is $\mathcal{H}_{\mathcal{D}}^0(\mathcal{A})$-flat one has 

$$\mathcal{H}_{\mathcal{D}}^p\big(f^*\mathcal{H}_{\mathcal{D}}^q(\mathcal{A})\big)\Rightarrow \mathcal{H}_{\mathcal{D}}^{p+q}(\mathcal{B}),$$ such that $\mathcal{H}_{\mathcal{D}}^j(\mathcal{B})\simeq \mathcal{H}_{\mathcal{D}}^0(\mathcal{B})\otimes_{\mathcal{H}_{\mathcal{D}}^0(\mathcal{A})}\mathcal{H}_{\mathcal{D}}^j(\mathcal{A}),$ for each $j.$
\end{proof}

\begin{cor}
Consider Proposition \ref{prop: D-flat result}. Then the $\D$-algebra $\mathcal{B}$ is homotopically flat over $\mathcal{A}$ if and only if for every $\mathcal{H}_{\mathcal{D}}^0(\mathcal{A})$-algebra $\mathcal{C}$ for every $i>0,$ one has $\mathcal{H}_{\mathcal{D}}^i(\mathcal{B}\otimes_{\mathcal{A}}^{\mathbb{L}}\mathcal{C})\simeq 0.$
\end{cor}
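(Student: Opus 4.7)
The plan is to leverage Proposition \ref{prop: D-flat result} as the degree-zero base case and then propagate flatness from $\mathcal{H}_{\mathcal{D}}^0(\mathcal{A})$ along arbitrary $\mathcal{H}_{\mathcal{D}}^0(\mathcal{A})$-algebras via base-change of the derived tensor product. The Tor spectral sequence of Proposition \ref{prop: Tor} is the main computational input, since it tracks how $\mathcal{D}$-cohomology of $\mathcal{M}\otimes_{\mathcal{A}}^{\mathbb{L}}\mathcal{N}$ is assembled from the classical $\mathcal{H}_{\mathcal{D}}^0$-level $\mathcal{T}or$ groups together with the higher cohomology of the inputs.

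For the forward direction, suppose $\mathcal{B}$ is homotopically $\D$-flat over $\mathcal{A}$. By Proposition \ref{prop: D-flat result}, the object $\mathcal{B}\otimes_{\mathcal{A}}^{\mathbb{L}}\mathcal{H}_{\mathcal{D}}^0(\mathcal{A})$ is weakly equivalent to the discrete $\mathcal{H}_{\mathcal{D}}^0(\mathcal{A})$-algebra $\mathcal{H}_{\mathcal{D}}^0(\mathcal{B})$, which is classically flat over $\mathcal{H}_{\mathcal{D}}^0(\mathcal{A})$. For any $\mathcal{H}_{\mathcal{D}}^0(\mathcal{A})$-algebra $\mathcal{C}$, associativity of the derived tensor product yields
\[
\mathcal{B}\otimes_{\mathcal{A}}^{\mathbb{L}}\mathcal{C}\;\simeq\; \bigl(\mathcal{B}\otimes_{\mathcal{A}}^{\mathbb{L}}\mathcal{H}_{\mathcal{D}}^0(\mathcal{A})\bigr)\otimes_{\mathcal{H}_{\mathcal{D}}^0(\mathcal{A})}^{\mathbb{L}}\mathcal{C}\;\simeq\; \mathcal{H}_{\mathcal{D}}^0(\mathcal{B})\otimes_{\mathcal{H}_{\mathcal{D}}^0(\mathcal{A})}^{\mathbb{L}}\mathcal{C},
\]
and classical flatness of $\mathcal{H}_{\mathcal{D}}^0(\mathcal{B})$ over $\mathcal{H}_{\mathcal{D}}^0(\mathcal{A})$ collapses the spectral sequence of Proposition \ref{prop: Tor} applied to the right-hand side, giving $\mathcal{H}_{\mathcal{D}}^i(\mathcal{B}\otimes_{\mathcal{A}}^{\mathbb{L}}\mathcal{C})\simeq 0$ for all $i>0$.

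For the converse, specialize the hypothesis to $\mathcal{C}=\mathcal{H}_{\mathcal{D}}^0(\mathcal{A})$ itself. Vanishing of $\mathcal{H}_{\mathcal{D}}^{i}(\mathcal{B}\otimes_{\mathcal{A}}^{\mathbb{L}}\mathcal{H}_{\mathcal{D}}^0(\mathcal{A}))$ for $i>0$ means that $\mathcal{B}\otimes_{\mathcal{A}}^{\mathbb{L}}\mathcal{H}_{\mathcal{D}}^0(\mathcal{A})$ is discrete, so by Proposition \ref{prop: D-flat result} it only remains to verify that $\mathcal{H}_{\mathcal{D}}^0(\mathcal{B})$ is classically flat over $\mathcal{H}_{\mathcal{D}}^0(\mathcal{A})$. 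To promote vanishing tested on algebras to vanishing tested on arbitrary $\mathcal{H}_{\mathcal{D}}^0(\mathcal{A})[\mathcal{D}_X]$-modules $M$, I would apply the hypothesis to the trivial square-zero extension $\mathcal{C}_M:=\mathcal{H}_{\mathcal{D}}^0(\mathcal{A})\oplus M$, which is canonically an $\mathcal{H}_{\mathcal{D}}^0(\mathcal{A})$-algebra, and then extract the direct summand $\mathcal{B}\otimes_{\mathcal{A}}^{\mathbb{L}}M$ from $\mathcal{B}\otimes_{\mathcal{A}}^{\mathbb{L}}\mathcal{C}_M$. This yields $\mathcal{T}or_{\mathcal{H}_{\mathcal{D}}^0(\mathcal{A})}^{i}(\mathcal{H}_{\mathcal{D}}^0(\mathcal{B}),M)\simeq 0$ for $i>0$ and all $M$, hence flatness.

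The main obstacle I anticipate is the converse direction, specifically the passage from algebra-tested Tor vanishing to module-tested Tor vanishing. The square-zero extension trick is the clean way to bridge the two, but one must check that the Tor spectral sequence of Proposition \ref{prop: Tor} is compatible with the direct sum decomposition $\mathcal{C}_M\simeq \mathcal{H}_{\mathcal{D}}^0(\mathcal{A})\oplus M$ in the category of $\mathcal{H}_{\mathcal{D}}^0(\mathcal{A})[\mathcal{D}_X]$-modules, and that the splitting is respected by the derived base change. Once this functoriality is in hand, the rest reduces to the standard translation between algebra-level and module-level flatness in the $\mathcal{A}[\mathcal{D}]$-module setting.
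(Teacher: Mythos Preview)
Your argument is correct. The paper states this corollary without proof, treating it as an immediate consequence of Proposition~\ref{prop: D-flat result}; your proposal supplies exactly the details one would expect the reader to fill in. The forward direction via associativity of the derived tensor product is the standard reduction, and your square-zero extension trick for the converse is the right way to pass from algebra-tested vanishing to module-tested vanishing. The compatibility concern you flag is not an obstacle: the derived tensor product $\mathcal{B}\otimes_{\mathcal{A}}^{\mathbb{L}}(-)$ is additive, so it splits over $\mathcal{C}_M=\mathcal{H}_{\mathcal{D}}^0(\mathcal{A})\oplus M$ on the nose, and $\mathcal{H}_{\mathcal{D}}^i$ commutes with finite direct sums.
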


Given a morphism of derived $\mathcal{D}_X$-schemes $\varphi:\mathcal{X}\rightarrow \mathcal{Y},$ notice that any $\mathcal{M}_{\mathcal{X}}^{\bullet} \in \mathcal{A}^{\bullet}-\DG$, quasi-coherent over $X$, is homotopically flat over $\mathcal{Y}$ if for any discrete $\mathcal{O}_{\mathcal{Y}}[\mathcal{D}_X]$-module $\mathcal{N}_{\mathcal{Y}}\simeq \mathcal{H}_{\mathcal{D}}^0(\mathcal{N}_{\mathcal{Y}}),$ we have $\mathcal{M}_{\mathcal{Y}}^{\bullet}\otimes^{\mathbb{L}}\varphi^*\mathcal{N}_{\mathcal{Y}},$ is discrete. 

\begin{prop}
\label{prop: D-epi is local}
The property of being a $\D$-epimorphism (a $\D$-surjection on $\mathcal{H}_{\mathcal{D}}^0$) is a $\D$-étale local property.
\end{prop}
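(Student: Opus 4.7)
The plan is to reduce the claim to classical faithfully flat descent for surjections of commutative $\D_X$-algebras. By definition, $f:\A^{\bullet}\to \mathcal{B}^{\bullet}$ is a $\D$-epimorphism iff the $\D_X$-module cokernel $\mathcal{Q}:=\mathrm{coker}\bigl(\mathcal{H}_\D^0(f)\bigr)$ vanishes, so the entire question reduces to showing that vanishing of $\mathcal{Q}$ can be tested on an arbitrary $\D$-étale cover of $\A^{\bullet}$.

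Given a $\D$-étale cover $\{g_\alpha:\A^{\bullet}\to \A_\alpha^{\bullet}\}_{\alpha\in A}$, I would first invoke condition (2) of Subsect.\ \ref{sssec: D-étale}, and in particular the infinitesimal lifting property (\ref{eqn: Classical D-étale}), to conclude that the induced family $\{\mathcal{H}_\D^0(g_\alpha)\}$ is a classical $\D$-étale cover of $\mathcal{H}_\D^0(\A^{\bullet})$; combined with condition (1) (which forces $\D$-flatness of each $g_\alpha$ via $\mathbb{L}_{\A_\alpha/\A}\simeq 0$), the finite conservative subfamily guaranteed by (2) becomes faithfully flat on the abelian category of $\mathcal{H}_\D^0(\A)[\D_X]$-modules. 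Setting $\mathcal{B}_\alpha^{\bullet}:=\mathcal{B}^{\bullet}\otimes_{\A^{\bullet}}^{\Ld}\A_\alpha^{\bullet}$, Propositions \ref{prop: D-flat result} and \ref{prop: Tor} then deliver an isomorphism
$$\mathcal{H}_\D^0(\mathcal{B}_\alpha^{\bullet})\simeq \mathcal{H}_\D^0(\mathcal{B}^{\bullet})\otimes_{\mathcal{H}_\D^0(\A^{\bullet})}\mathcal{H}_\D^0(\A_\alpha^{\bullet}),$$
and $\D$-flatness makes the tensor product exact in the covering variable, so that cokernel formation commutes with base change, yielding $\mathrm{coker}\bigl(\mathcal{H}_\D^0(f_\alpha)\bigr)\simeq \mathcal{Q}\otimes_{\mathcal{H}_\D^0(\A)}\mathcal{H}_\D^0(\A_\alpha)$.

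Faithful flatness then closes the argument in both directions: $\mathcal{Q}=0$ iff $\mathcal{Q}\otimes_{\mathcal{H}_\D^0(\A)}\mathcal{H}_\D^0(\A_\alpha)=0$ for every $\alpha$ in the conservative subfamily, whence $f$ is a $\D$-epimorphism iff each base-changed $f_\alpha:\A_\alpha^{\bullet}\to \mathcal{B}_\alpha^{\bullet}$ is. The main obstacle I anticipate is the first step, namely certifying that the abstract conservativity asserted in condition (2) really yields classical faithfully flat descent for $\mathcal{H}_\D^0(\A)[\D_X]$-modules. This will be handled by passing through the forgetful functor $\mathrm{For}_\D$ of (\ref{eqn: Forget D-Alg}) and checking that joint reflection of zero objects on $\DG(\A)$, restricted to the heart of the natural $t$-structure, coincides with classical faithful flatness on the underlying $\mathcal{O}_X$-modules endowed with a flat connection; the $\D$-Zariski comparison of Proposition \ref{prop: Zariski substacks} then allows one to reduce, if needed, to affine Zariski patches where the standard faithfully flat argument applies directly.
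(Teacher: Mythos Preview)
Your argument is correct and runs on the same two ingredients as the paper's proof: flatness of each member of the $\D$-\'etale cover (so that pull-back commutes with $\mathcal{H}_\D^0$) and the conservativity clause (2) of Subsect.~\ref{sssec: D-étale} (so that vanishing descends). The differences are mostly packaging.

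First, a small mismatch of setting: the paper states and proves the proposition for morphisms of $\mathcal{O}_Z[\D]$-\emph{modules} $\varphi:\mathcal{M}^\bullet\to\mathcal{N}^\bullet$ over a $\D$-prestack $Z$, not for algebra maps $f:\A^\bullet\to\mathcal{B}^\bullet$ as you do. This is the form actually used downstream (e.g.\ in Proposition~\ref{prop: DerDQuot is stack}), and your argument transposes to that case verbatim once you replace your cokernel $\mathcal{Q}$ by $\mathrm{coker}(\mathcal{H}_\D^0(\varphi))$ in $\mathcal{H}_\D^0(\mathcal{O}_Z)[\D_X]$-modules.

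Second, the paper's route is slightly more direct than yours. Rather than tracking the cokernel, it observes that $\varphi$ is a $\D$-epimorphism iff the induced map $\varphi':\mathcal{H}_\D^0(\mathcal{N})\to\mathcal{H}_\D^0(\mathrm{Cone}(\varphi))$ is zero; flatness of each $f_i$ gives $f_i^*\mathcal{H}_\D^0(\mathrm{Cone}(\varphi))\simeq\mathcal{H}_\D^0(\mathrm{Cone}(f_i^*\varphi))$, so $f_i^*(\varphi')$ is the corresponding map for $f_i^*\varphi$, and one concludes $\varphi'\simeq 0$ directly from conservativity. Your cokernel formulation is equivalent, but the cone phrasing avoids having to separately argue that cokernel formation commutes with flat base change.

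Third, your final paragraph is unnecessary over-engineering. You do not need to pass through $\mathrm{For}_\D$ or invoke Proposition~\ref{prop: Zariski substacks} to certify faithful flatness on the heart: the conservativity hypothesis in condition~(2) is stated at the level of the homotopy categories $Ho(\DG(\A))\to Ho(\DG(\A_i))$, and restricted to discrete objects this \emph{is} the statement that $\mathcal{Q}=0$ iff all $\mathcal{Q}\otimes_{\mathcal{H}_\D^0(\A)}\mathcal{H}_\D^0(\A_\alpha)=0$. No further unpacking is required.
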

\begin{proof}
Suppose $Z$ is a $\D$-prestack with a $\D$-étale cover $\{f_i:\mathcal{U}_i\rightarrow Z\}.$ Then if $\varphi:\mathcal{M}^{\bullet}\rightarrow \mathcal{N}^{\bullet}$ is a morphism of $\mathcal{O}_{Z}[\mathcal{D}]$-modules, we must show if $f_{i}^*(\varphi)$ is a $\D$-epimorphisms for each $i$, then so is $\varphi.$
Observe that being a $\D$-epimorphism is equivalent to the triviality of the induced map 
$\varphi':\mathcal{H}_{\mathcal{D}}^0(\mathcal{N})\rightarrow \mathcal{H}_{\mathcal{D}}^0(\mathrm{Cone}(\varphi)).$ Since $f_{i}$ is a $\D$-étale map, it is in particular $\D$-flat and thus pull-back is exact and commutes with cohomologies. Moreover, one can verify that $f_i^*\big(\mathcal{H}_{\mathcal{D}}^0\mathrm{Cone}(\varphi)\big)\simeq \mathcal{H}_{\mathcal{D}}^0\big(f_i^*\mathrm{Cone}(\varphi)\big)$ for each $i$ and from the commutation of $f_i^*$ with $\mathcal{H}_{\mathcal{D}}^0,$ we have
$$f_i^*(\varphi'):\mathcal{H}_{\mathcal{D}}^0\big(f_i^*\mathcal{N}\big)\rightarrow \mathcal{H}_{\mathcal{D}}^0\big(\mathrm{Cone}(f_i^*\varphi)\big),$$
which is trivial for all $i,$ since $f_{i}^*(\varphi)$ is a $\D$-epimorphism. Thus $\varphi'\simeq 0.$
\end{proof}

\section{Derived moduli of $\D$-ideal sheaves}
\label{sec: Derived moduli of D-ideals}
Fix a numerical polynomial $P\in \mathbb{Q}[z].$ In this section we construct a derived enhancement of the moduli space of $\D$-involutive (Spencer regular) ideal sheaves $\mathcal{I}$ in a fixed ambient jet $\D$-scheme $J_X^{\infty}E,$ with numerical polynomials $P_{\D}(\mathcal{I},z)=P(z).$ 
Note that if $X$ is complete and $E$ is projective, then $J_X^{\infty}E$ is a scheme. 
\begin{cons}
\label{cons: RQuot and conditions}
\normalfont 
Fix a smooth ambient $\D$-scheme $Z$ with structure sheaf $\mathcal{O}_Z$ over a $\D$-affine proper $k$-scheme $X$ and fix a sheaf of finitely presented $\mathcal{O}_Z[\mathcal{D}_X]$-modules $\mathcal{M}.$ For a derived $\D$-algebra $\mathcal{B}$ set $\mathcal{M}_{\mathcal{B}}:=\mathcal{M}\otimes\mathcal{B}$ to be the perfect complex on $Z_{\mathcal{B}}=Z\times \mathbf{Spec}_{\mathcal{D}}(\mathcal{B}).$
Consider the functor
\begin{equation}
    \label{eqn: DerDQuot}
    \mathbb{R}\underline{Q}_{\mathcal{D}_X,Z}(\mathcal{M}):\mathbf{CAlg}(\mathcal{D}_X)\rightarrow \infty\mathbf{Grpd},
\end{equation}
whose $\mathcal{B}^{\bullet}$-points $\mathbb{R}\underline{Q}_{\mathcal{D}_X,Z}(\mathcal{B}^{\bullet})$ are given by the full-sub $\infty$-groupoid of $\mathbf{Perf}(Z_{\mathcal{B}})_{\mathcal{M}_{\mathcal{B}}}^{\leq 0,\simeq}$ spanned by maps 
$\varphi:\mathcal{M}_{\mathcal{B}}\rightarrow \mathcal{N},$ such that $\mathcal{N}$ is $\mathcal{D}_X$-flat over $\mathbf{Spec}_{\mathcal{D}_X}(\mathcal{B}^{\bullet})$ such that $\varphi$ is a $\D$-epimorphism (surjective on $\mathcal{H}_{\mathcal{D}}^0$) and whose cohomology sheaves have proper supports. 
 In particular, for some dg-$\mathcal{D}$-algebra $\mathcal{C}^{\bullet},$ (over $Z$), as both conditions are stable
under derived base change we indeed have a functor. The conditions state that,
\begin{equation}
    \label{eqn: ProperSuppQuotCondition}
\mathrm{Char}_{\mathcal{D}_X}(\mathcal{C}^{\bullet})\rightarrow \mathbb{R}\mathrm{Spec}_{\mathcal{D}_X}(\mathcal{B}^{\bullet}),
\end{equation}
is a proper morphism of $\mathcal{D}_X$-schemes (relative to $Z$) and that
$\mathcal{H}_{\mathcal{D}}^0(\varphi):H^0(\mathcal{A}^{\bullet}\otimes\mathcal{B}^{\bullet})\rightarrow \mathcal{H}_{\mathcal{D}}^0(\mathcal{C}^{\bullet})$ is a surjective morphisms of classical $\mathcal{D}_X$-algebras satisfying:
\begin{eqnarray}
    \label{eqn: HilbProperty}
    \begin{cases}
        \mathcal{H}_{\mathcal{D}}^0(\mathcal{C}^{\bullet}) \textit{ has Hilbert polynomial } P \textit{ when restricted to } 
        \\
        \textit{ a fiber of } Z\times \mathrm{Spec}_{\mathcal{D}_X}\big(\mathcal{H}_{\mathcal{D}}^0(\mathcal{B}^{\bullet})\big)\rightarrow \mathrm{Spec}_{\mathcal{D}_X}\big(\mathcal{H}_{\mathcal{D}}^0(\mathcal{B}^{\bullet})\big), 
        \\
        \text{ with } \mathcal{H}_{\mathcal{D}}^0(\mathcal{C}^{\bullet}) \text{ is } \mathcal{D}_X-\text{flat over } \mathrm{Spec}_{\mathcal{D}_X}\big(\mathcal{H}_{\mathcal{D}}^0(\mathcal{B}^{\bullet})\big).
        \end{cases}
\end{eqnarray}
In particular, $\mathcal{H}_{\mathcal{D}}^0(\varphi)$ is a surjective morphism of commutative $\mathcal{D}_X$-algebras.
\end{cons}

Condition (\ref{eqn: ProperSuppQuotCondition}) on the relative characteristic variety in the Construction \ref{cons: RQuot and conditions} is the $\mathcal{D}_X$-geometric analog of the condition that its support (microsupport of solution complex intersected with the zero-section $X$) is proper over the classical truncation $\mathrm{Spec}_{\mathcal{D}_X}\big(\mathcal{H}_{\mathcal{D}}^0(\mathcal{B})\big).$ In other words, derived non-linear PDE defined by $\mathcal{C}^{\bullet}$ is properly (micro)supported over $Z\times \mathbb{R}\mathrm{Spec}_{\mathcal{D}_X}(\mathcal{B}^{\bullet})$ relative to $\mathbb{R}\mathrm{Spec}_{\mathcal{D}_X}(\mathcal{B}^{\bullet}).$  

This morphism is obtained as follows. Given a $\mathcal{B}^{\bullet}$-point $\varphi:\mathcal{A}^{\bullet}\otimes\mathcal{B}^{\bullet}\rightarrow \mathcal{C}^{\bullet},$ consider
\begin{equation}
    \label{eqn: Cofibseq}
\mathbb{L}_{\mathcal{A}\otimes\mathcal{B}}\otimes^{\mathbb{L}}\mathcal{C}\rightarrow \mathbb{L}_{\mathcal{C}}\rightarrow \mathbb{L}_{\mathcal{C}/\mathcal{A}\otimes\mathcal{B}}.
\end{equation}
Since $X$ is ($D$-)affine, let $\mathcal{E}_X$ denote the sheaf of microdifferential operators. Apply the functor of twisted microlocalization $_{\mathcal{A}}\mu$ along $\mathcal{A}$ to (\ref{eqn: Cofibseq}) gives,
$$_{\mathcal{A}}\mu\big(\mathbb{L}_{\mathcal{A}\otimes\mathcal{B}}\otimes^{\mathbb{L}}\mathcal{C}\big)\rightarrow _{\mathcal{A}}\mu\big(\mathbb{L}_{\mathcal{C}}\big)\rightarrow _{\mathcal{A}}\mu\big(\mathbb{L}_{\mathcal{C}/\mathcal{A}\otimes\mathcal{B}}\big).$$
Take the (cohomological) supports relates the $\D$-characteristic varieties. In particular, we obtain a map
\begin{equation}
\label{eqn: Char map}
\mathrm{Char}_{\mathcal{D}_X}(\mathcal{C}^{\bullet})=\mathrm{supp}\big(_{\mathcal{A}}\mu\mathbb{L}_{\mathcal{C}}\big)\rightarrow Z\times^h\mathbb{R}\mathrm{Spec}_{\mathcal{D}_X}(\mathcal{B})\rightarrow \mathbb{R}\mathrm{Spec}_{\mathcal{D}_X}(\mathcal{B}).
\end{equation}
Then, (\ref{eqn: Char
map}) is asked to be proper and as $\mathrm{supp}(\mu\mathbb{L}_{\mathcal{C}}^{\bullet})=\cup_{j}\mathrm{supp}(\mathcal{H}_{\mathcal{D}}^j(\mu\mathbb{L}_{\mathcal{C}}^{\bullet})\big)$, each component is proper over $\mathrm{Spec}_{\mathcal{D}}\big(\mathcal{H}_{\mathcal{D}}^0\mathcal{B})\big).$
\vspace{1.5mm}

The functor $\mathbb{R}\underline{Q}_{\mathcal{D}_X,Z}^{P,pre},$ makes sense for sheaves of perfect complexes of $\mathcal{O}_{Z}[\mathcal{D}_X]$-modules for any homotopically finitely presented $\mathcal{D}_X$-scheme $Z,$ and thus parameterizes homotopically $\mathcal{D}_X$-flat families of derived sheaf quotients. 

\begin{notate}
\label{notate: Special notation}
\normalfont 
We reserve the special notation, 
$\mathbb{R}{\mathbf{Quot}}_{\mathcal{D}_X,Z}:=\mathbb{R}\underline{Q}_{\mathcal{D}_X,Z}^{P}(\mathcal{C}h).$
\end{notate}
We make the following definition.

\begin{defn}
\normalfont 
Consider (\ref{eqn: DerDQuot}) and fix a derived $\D$-algebra $\mathcal{B}$. Suppose $P$ is a constant numerical polynomial. The \emph{derived $\D$-Quot functor} of a smooth involutive $\D$-scheme $Z$ is the simplicial $\D$-presheaf $\mathbf{Quot}_{\mathcal{D}_X,Z}^P$ assigning to $\mathcal{B}$ the $\infty$-groupoid of maps: $\mathcal{O}_Z\otimes \mathcal{B}\rightarrow \mathcal{M}$ such that the induced map $\mathcal{O}_Z\otimes \mathcal{H}_{D}^0(\mathcal{B})\rightarrow \mathcal{H}_{\mathcal{D}}^0(\mathcal{M})$ is a $\D$-surjection and $\mathcal{M}$ is a differential graded $(\mathcal{O}_{Z_{\mathcal{B}}})[\mathcal{D}_X]$-module flat over $\mathcal{B}$ such that the $\mathcal{H}_{\mathcal{D}}^0(\mathcal{B})[\mathcal{D}_X]$-module $\mathcal{H}_{\mathcal{D}}^0(\mathcal{M})$ is a vector $\D$-bundle over $Z$ of rank $P.$
\end{defn}

\begin{prop}
\label{prop: DerDQuot is stack}
The simplicial $\D$-presheaf \emph{(\ref{eqn: DerDQuot})} is a derived $\D$-prestack satisfying Zariski-descent \ref{sssec: D-étale}.
\end{prop}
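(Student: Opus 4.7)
My plan is to verify two properties in turn: first that (\ref{eqn: DerDQuot}) is genuinely a functor $\mathbf{CAlg}(\mathcal{D}_X) \to \infty\mathbf{Grpd}$, i.e., that the four defining conditions on a $\mathcal{B}^{\bullet}$-point are stable under derived base change; and second that the resulting presheaf satisfies descent along $\D$-Zariski covers in the sense of Subsect. \ref{sssec: D-étale}.

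For functoriality, given a morphism $\mathcal{B}^{\bullet} \to \mathcal{B}'^{\bullet}$ of derived $\D$-algebras, base change $(-)\otimes_{\mathcal{B}}^{\Ld}\mathcal{B}'$ sends $\mathbf{Perf}(Z_{\mathcal{B}})_{\mathcal{M}_{\mathcal{B}}}^{\leq 0,\simeq}$ to $\mathbf{Perf}(Z_{\mathcal{B}'})_{\mathcal{M}_{\mathcal{B}'}}^{\leq 0,\simeq}$. I would check that each condition persists: the $\D$-flatness condition is preserved by Proposition \ref{prop: D-flat result} together with the fact that tensor product along base change preserves flat modules; the $\D$-epimorphism property persists since base change is right-exact and preserves surjections onto $\mathcal{H}_{\mathcal{D}}^0$; the proper support condition on $\mathrm{Char}_{\mathcal{D}_X}(\mathcal{C}^{\bullet}) \to \mathbb{R}\mathbf{Spec}_{\mathcal{D}_X}(\mathcal{B}^{\bullet})$ is stable by base change of proper morphisms; and the Hilbert polynomial condition, being a numerical invariant on fibers, is preserved as well.

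For Zariski descent, I would take a $\D$-Zariski covering $\{f_{\alpha}:\mathbf{Spec}_{\mathcal{D}}(\mathcal{A}_{\alpha}) \to \mathbf{Spec}_{\mathcal{D}}(\mathcal{B})\}$ in the sense of Subsect. \ref{sssec: D-étale}. The starting point is that $\mathbf{Perf}(Z_{(-)})$ satisfies $\D$-Zariski descent, which is the $\D$-geometric analog of the classical descent of perfect complexes along Zariski covers and follows from the descent results implicit in the flatness package of Subsect. \ref{ssec: Finiteness}. Given matching local data with coherent isomorphisms on the Čech nerve, one obtains a glued object $\mathcal{N}$ over $\mathcal{B}$ together with a morphism $\mathcal{M}_{\mathcal{B}} \to \mathcal{N}$. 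It remains to verify that each of the four defining properties descends: since $\D$-étale (hence $\D$-Zariski) maps are $\D$-flat and the covering family is jointly conservative, $\D$-flatness can be checked after pullback; the $\D$-epimorphism condition descends by Proposition \ref{prop: D-epi is local}; proper support is Zariski-local on the target by standard arguments for proper morphisms; and the Hilbert polynomial condition on $\mathcal{H}_{\mathcal{D}}^0(\mathcal{C}^{\bullet})$, being a condition on the classical truncation, reduces to classical $\D$-Zariski descent applied to $\underline{\mathrm{Hilb}}_{\D}^P(Z)$ of \cite{KSh}.

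The main subtle point will be the coherence between the four conditions under simplicial descent, rather than any single one individually. Concretely, I have to ensure that the glued perfect complex $\mathcal{N}$ realised via $\infty$-categorical descent for $\mathbf{Perf}$ lies in the sub-$\infty$-groupoid cut out by the intersection of all four conditions, and that this intersection is stable under limits along the Čech nerve. The most delicate piece is the proper support condition, because it is formulated through the microlocalized cotangent complex via (\ref{eqn: Char map}), so I must confirm that the formation of $\mathrm{supp}(_{\mathcal{A}}\mu\mathbb{L}_{\mathcal{C}})$ commutes with the relevant totalisations. This follows by exhibiting the proper support condition as a closed-open locus in $\mathbb{R}\mathbf{Spec}_{\mathcal{D}_X}(\mathcal{B})$ and invoking Proposition \ref{prop: Zariski substacks} to transport it through the descent diagram.
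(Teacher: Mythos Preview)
Your proposal is correct and follows essentially the same approach as the paper: the paper's proof is a one-liner invoking that $\mathbf{Perf}$ is a stack and that the properties of being a $\D$-epimorphism and $\D$-flat are local for the $\D$-étale topology via Proposition~\ref{prop: D-epi is local}. You have simply unpacked this in more detail, treating the proper-support and Hilbert-polynomial conditions explicitly where the paper leaves them implicit; the paper later supplements its terse argument with the alternative viewpoint of Subsect.~\ref{ssec: An alternative description}, but the core descent strategy is the same as yours.
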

\begin{proof}
Follows from the fact that $\mathbf{Perf}$ is a stack, and that the property of being $\D$-epimorphic and $\D$-flat
are local for the $\D$-étale topology by Proposition \ref{prop: D-epi is local}.
\end{proof}

Condition (\ref{eqn: HilbProperty}) ensures the Hilbert polynomial of the quotient is fixed and from (the proof of) Proposition \ref{prop: Comparison truncation} given below, is the $\mathcal{D}_X$ Hilbert polynomial one would expect in the smooth un-derived situation on the classical truncation. 

We introduce the $\D$-prestack
$$\mathbb{R}\underline{\mathbf{Hilb}}_{\mathcal{D}_X,Z}^{P}:\mathbf{CAlg}(\mathcal{D}_X)\simeq \mathbf{dSch}_X(\mathcal{D}_X)^{op}\rightarrow \infty\mathbf{Grpd}.$$

\begin{defn}
    \label{defn: Derived D Hilbert}
    \normalfont  The \emph{derived} $\mathcal{D}_X$-\emph{Hilbert functor} of the $\mathcal{D}_X$-subscheme $Z$ is the derived (dg)-$\D$ ideal scheme we will always denote by $\mathbb{R}\mathbf{Hilb}_{\mathcal{D}_X,Z}^{P}.$
\end{defn}
Recall that if $\mathrm{dSt}_{\mathbb{C}}$ is the $\infty$-category of derived stacks, it is common to 
denote by $\tau_0:\mathrm{dSt}_{\mathbb{C}}\rightarrow \mathrm{St}_{\mathbb{C}}$, the truncation functor between derived and underived stacks over
$\mathbb{C}$ for the étale topologies (see \cite[Definition 2.2.4.3]{TV}). Similar notation is used for objects of $\mathrm{dStk}_{X}(\D)$.

\begin{prop}
\label{prop: Comparison truncation}
Let $Z$ be $\D$-smooth involutive $\D$-scheme and fix a numerical polynomial $P.$ 
    There is a canonical morphism of derived $\mathcal{D}_X$-prestacks
    $$\psi:\mathrm{Hilb}_{\mathcal{D}_X,Z}^{P}\rightarrow \tau_0\mathbb{R}\mathbf{Hilb}_{\mathcal{D}_X,Z}^{P},$$
    such that its $\mathcal{C}$-points for a reduced discrete $\mathcal{D}_X$-algebra $\mathcal{C}$, gives an isomorphism of connected components e.g.  
$$\pi_0(\tau):\pi_0\big(\mathrm{Hilb}_{\mathcal{D}_X,Z}^{P}(\mathrm{Spec}_{\mathcal{D}_X}(\mathcal{C})\big)\simeq \pi_0\big(\tau_0\mathbb{R}\mathbf{Hilb}_{\mathcal{D}_X,Z}^{P}(\mathrm{Spec}_{\mathcal{D}_X}(\mathcal{C})\big).$$
\end{prop}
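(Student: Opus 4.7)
The strategy has two parts: first construct the canonical morphism $\psi$ by interpreting classical $\D$-flat quotients as special (discrete) derived ones, and then verify that on reduced discrete $\mathcal{C}$ the higher derived data collapses to the classical datum at the level of $\pi_0$.

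To construct $\psi$: a $\mathcal{C}$-point of $\mathrm{Hilb}_{\mathcal{D}_X,Z}^{P}$ for a discrete $\D$-algebra $\mathcal{C}$ is a classical $\D$-flat surjection $\mathcal{O}_Z\otimes\mathcal{C}\twoheadrightarrow \mathcal{Q}$ with $\D$-Hilbert polynomial $P$. Via the fully faithful inclusion of discrete $\D$-algebras into $\mathbf{CAlg}(\mathcal{D}_X)$, any such surjection defines an object of $\mathbb{R}\mathbf{Hilb}_{\mathcal{D}_X,Z}^{P}(\mathcal{C})$: properness of the derived characteristic variety (\ref{eqn: ProperSuppQuotCondition}) reduces to properness of the classical characteristic support (which holds by the construction of \cite{KSh}), and condition (\ref{eqn: HilbProperty}) is exactly the classical flatness and Hilbert-polynomial data. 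Taking the universal property of the truncation functor $\tau_0$ then yields the natural transformation $\psi$.

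For the isomorphism on $\pi_0$, I would construct the inverse map explicitly. Given a class in $\pi_0\big(\mathbb{R}\mathbf{Hilb}_{\mathcal{D}_X,Z}^{P}(\mathcal{C})\big)$, represented by $\varphi:\mathcal{O}_Z\otimes\mathcal{C}\to \mathcal{N}^{\bullet}$ with $\mathcal{N}^{\bullet}$ being $\mathcal{D}_X$-flat over $\mathcal{C}$, apply Proposition \ref{prop: D-flat result} to $\mathcal{C}$ discrete (so $\mathcal{H}^0_{\D}(\mathcal{C})\simeq\mathcal{C}$): this gives $\mathcal{N}^{\bullet}\otimes_{\mathcal{C}}^{\mathbb{L}}\mathcal{C}\simeq \mathcal{N}^{\bullet}$ is weakly equivalent to a discrete flat commutative $\mathcal{C}[\mathcal{D}_X]$-algebra, so $\mathcal{H}_{\D}^{-i}(\mathcal{N}^{\bullet})\simeq 0$ for $i\geq 1$. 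The assignment $[\varphi]\mapsto [\mathcal{H}_{\D}^0(\varphi)]$ then gives a well-defined map $\pi_0\big(\mathbb{R}\mathbf{Hilb}_{\mathcal{D}_X,Z}^{P}(\mathcal{C})\big)\to \pi_0\big(\mathrm{Hilb}_{\mathcal{D}_X,Z}^{P}(\mathcal{C})\big)$, inverse to $\pi_0(\psi)$: reducedness of $\mathcal{C}$ is used to rule out nilpotent thickenings which might otherwise distinguish two classical flat quotients sharing the same underlying truncation, so $\pi_0$ detects exactly the classical class.

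The main obstacle will be verifying that the Hilbert polynomial condition (\ref{eqn: HilbProperty}) is preserved by the inverse assignment. Since $\mathcal{C}$ is reduced and $\mathcal{N}^{\bullet}$ is concentrated in degree zero up to equivalence, $\mathcal{H}_{\D}^0(\mathcal{N}^{\bullet})$ is a classical $\D$-flat $\mathcal{C}$-algebra whose fibers must be shown to have $\D$-Hilbert polynomial equal to $P$. By Proposition \ref{prop: D-Hilb and Microlocal Ideal Scheme}, invoking involutivity of $Z$, this Hilbert polynomial is determined by the associated graded $\mathcal{O}_{T^*X}$-submodule data, which is computed directly on classical truncations and is invariant under the passage $\mathcal{N}^{\bullet}\leadsto \mathcal{H}_{\D}^0(\mathcal{N}^{\bullet})$ since all higher cohomologies vanish by flatness over the discrete base. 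Thus the $\D$-Hilbert polynomial of the classical truncation coincides with the numerical datum $P$, so the inverse map lands in the correct component of the classical Hilbert functor and the two assignments are mutually inverse on connected components.
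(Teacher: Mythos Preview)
Your proposal is correct and follows essentially the same approach as the paper: construct $\psi$ by viewing classical quotients as degree-zero derived ones, then invert on $\pi_0$ by using $\D$-flatness over a discrete base to force the derived quotient $\mathcal{N}^{\bullet}$ to be concentrated in degree zero, so that $[\varphi]\mapsto[\mathcal{H}^0_{\D}(\varphi)]$ is the inverse. The paper cites \cite[Proposition~4.14]{KSh} for this collapse, whereas you invoke Proposition~\ref{prop: D-flat result} from the present paper; these give the same conclusion.

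One minor remark: your explanation of the role of reducedness (``to rule out nilpotent thickenings which might otherwise distinguish two classical flat quotients sharing the same underlying truncation'') is not quite right and is not actually needed for the argument. The flatness argument already shows $\mathcal{N}^{\bullet}\simeq\mathcal{H}^0_{\D}(\mathcal{N}^{\bullet})$ over any discrete $\mathcal{C}$, reduced or not, and the paper's own proof does not invoke reducedness. Your Hilbert-polynomial discussion via Proposition~\ref{prop: D-Hilb and Microlocal Ideal Scheme} is more explicit than the paper's, but also unnecessary: condition~(\ref{eqn: HilbProperty}) is already phrased in terms of $\mathcal{H}^0_{\D}$, so once the quotient is discrete the polynomial condition transfers automatically.
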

\begin{proof}
Firstly, notice that 
$\underline{\mathbf{Quot}}_{\mathcal{D}_Z,Z}^P(Z)(\mathcal{B}^{\bullet})$ is the $\infty$-groupoid of $\varphi:\mathcal{O}_Z\otimes\mathcal{B}\rightarrow\mathcal{C}_1$ with
$$\mathcal{O}_{Z}\otimes\mathcal{H}_{\mathcal{D}}^0(\mathcal{B})\simeq\mathcal{H}_{\mathcal{D}}^0(\mathcal{O}_Z\otimes\mathcal{B})\rightarrow \mathcal{H}_{\mathcal{D}}^0(\mathcal{C}_1),$$
is a $\D$-epimorphism and $\mathcal{C}_1$ is a $\mathcal{O}_{Z_{\mathcal{B}}}[\mathcal{D}_X]$-dg module which is moreover $\D$-flat over $\mathcal{B}$ and such that the $\mathcal{H}_{\mathcal{D}}^0(\mathcal{B})[\mathcal{D}_X]$-module $\mathcal{H}_{\mathcal{D}}^0(\mathcal{C}_1)$ is finitely presented. Then, it is not difficult to observe that 
$$\mathrm{Hilb}_{\mathcal{D}_X,Z}(\mathcal{H}_{\mathcal{D}}^0(\mathcal{B}))\simeq \mathrm{Q}_{\mathcal{D}_X,Z}(\mathcal{H}_{\mathcal{D}}^0(\mathcal{B})).$$
Indeed, the truncation $Q_{\mathcal{D},Z}$ of functor (\ref{eqn: DerDQuot}) is easily viewed as having those $\mathcal{B}$-points given by epimorphisms of $\mathcal{D}_X$-algebras $\mathcal{O}_Z\otimes\mathcal{B}\xrightarrow{\varphi_1}\mathcal{C}_1$ where $\mathcal{C}_1\in QCoh(Z\times \mathrm{Spec}_{\mathcal{D}_X}(\mathcal{B}))\simeq \mathcal{O}_{Z\times \mathrm{Spec}_{\mathcal{D}_X}(\mathcal{B})}-QCoh(\mathcal{D}_X),$ which are moreover finitely $\mathcal{D}_X$-presented, $\mathcal{D}_X$-flat and properly microsupported relative to $\mathrm{Spec}_{\mathcal{D}_X}(\mathcal{B}),$ modulo the equivalence relation that $\varphi_1\simeq \varphi_2$ if there exists a weak equivalence of $\mathcal{D}_X$-algebras (here, an isomorphism) $\gamma:\mathcal{C}_1\rightarrow \mathcal{C}_2$ for which $\gamma\circ \varphi_1=\varphi_2.$
This yields our desired description of the classical prestack $\mathbf{Hilb}_{Z}^{P}:\mathrm{CAlg}_{\mathcal{D}_X}\rightarrow Grpd.$ 
In the standard way, we can view any classical point of this functor as a derived point, by viewing all objects concentrated in cohomological degree zero. 

Conversely,
any two $\mathcal{B}^{\bullet}$-points of $\mathbb{R}Q_{\mathcal{D},Z}$ connected by a weak-equivalence $\gamma$ inducing a homotopy commutative diagram
\[
\begin{tikzcd}
 \mathcal{O}_{Z}\otimes^{\mathbb{L}}\mathcal{B}^{\bullet}\arrow[r,"\varphi_1"] \arrow[d,"\varphi_2"] & \mathcal{C}_1^{\bullet}\arrow[dl,"\gamma"]
    \\
    \mathcal{C}_2^{\bullet} & 
\end{tikzcd}
\]
gives a classical point on the underlying classical truncation of the form
\[
\begin{tikzcd}
\mathcal{H}_{\mathcal{D}}^0\big(\mathcal{O}_{Z}\otimes^{\mathbb{L}}\mathcal{B}^{\bullet}\big)\arrow[r,"H^0(\varphi_1)"] \arrow[d,"H^0(\varphi_2)"] & \mathcal{H}_{\mathcal{D}}^0(\mathcal{C}_1^{\bullet})\arrow[dl]
    \\
    \mathcal{H}_{\mathcal{D}}^0(\mathcal{C}_2^{\bullet}) & 
\end{tikzcd}
\]
where the diagonal morphism $\mathcal{H}_{\mathcal{D}}^0(\gamma)$ is a $\mathcal{D}_X$-algebra isomorphism. 
On the other hand, if $\varphi$ is an object of the connected component of the classical truncation, since $\mathcal{C}^{\bullet}$ is $\mathcal{D}_X$-flat by \cite[Proposition 4.14]{KSh} we have a weak-equivalence of $\mathcal{D}_X$-algebras $\mathcal{C}^{\bullet}\simeq \mathcal{H}_{\mathcal{D}}^0(\mathcal{C})$ and the induced classical point from $\varphi$ is simply 
$\mathcal{H}_{\mathcal{D}}^0(\varphi):\mathcal{H}_{\mathcal{D}}^0(\mathcal{O}_{Z\times \mathrm{Spec}_{\mathcal{D}}(\mathcal{B})})\rightarrow \mathcal{H}_{\mathcal{D}}^0(\mathcal{C}).$
\end{proof}

\subsection{Microlocal stratifications}
In this subsection we give details on \cite[Definition 5.11, Proposition 5.12]{KSh}. Namely, we explain the construction of the $\Lambda$-constrained $\D$-Hilbert functor, 
\begin{equation}
\label{eqn: MicrolocalHilb}
\mathcal{H}ilb_{\mathcal{D}_X}^{P,\Lambda}(Z):=\mathcal{H}ilb_{\mathcal{D}_X}^{P}(Z)\cap \mathcal{H}ilb_{\mathcal{D}_X}^{\Lambda}(Z),
\end{equation}
where $\Lambda\subset T^*X$ and its projection to $X$ is smooth.
The sub-moduli functor (\ref{eqn: MicrolocalHilb}) controls allowable singularities.
To reduce technical machinery on $\infty$-categorical constructions, assume the following:
\begin{ass}
\label{Assumptions}
All $X_{DR}$-spaces are $\D$-finitary and admit perfect cotangent complexes with duals given via (\ref{eqn: LocVerd}).
\end{ass}
With the above assumptions, we write $\mathbf{Perf}(Z)$ in place of $\mathbf{Pro}\big(\mathbf{QCoh}(Z)^-).$

Following \cite[Proposition 5.16]{KSY}, given a morphism $F:Z_1\rightarrow Z_2$ in $\mathbf{PStk}_{X_{DR}}^{\D-fin},$ there exists, for each solution $u_T$, an object $T^*(Z_1/Z_2)_{u_T}\in \mathbf{Perf}(T\times X_{DR})$ by pull-back of $T^*(Z_1/Z_2)\in \mathbf{Perf}(Z_1)$.
The relative (pro)-cotangent complex fits into the cofiber sequence
\begin{equation}
\label{eqn: ProCotangent Cofiber}
F^{\sharp}T^*(Z_2/X_{DR})\rightarrow T^*(Z_1/X_{DR})\rightarrow T^*(Z_1/Z_2),
\end{equation}
in $\mathbf{Perf}(Z_1),$ which we remind is the category of perfect $\mathcal{O}_{Z_1}$-dg-modules in $\mathbf{QCoh}(X_{DR}).$ It is denoted by $\mathbf{Perf}_{\D_X}(Z_1).$

\begin{prop}
Let $X$ be a smooth proper $k$-scheme. Consider $E\rightarrow X$ a prestack locally of finite type over $X$. Then,
$$\mathbf{Perf}(q_*E)\simeq \underset{S\rightarrow E}{\mathrm{lim}}\mathbf{Perf}(\mathcal{A}_S^{\bullet}\otimes \D_X),$$
where $\mathcal{A}_S^{\bullet}:=\mathcal{O}(q_*S)$.
\end{prop}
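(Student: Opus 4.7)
The plan is to reduce to the case of affine test objects by descent, and then to identify the local category of perfect complexes on $q_*S$ with perfect modules over $\mathcal{A}_S^{\bullet}\otimes\D_X$. First, I would express $E$ as the colimit of its slice of affine test schemes, $E\simeq \mathrm{colim}_{S\to E}\,S$, in the $\infty$-category of prestacks over $X$. Since $q_*$ is computed objectwise on prestacks and commutes with colimits in its argument (being a left adjoint to the pullback $q^*$ along $q:X\to X_{DR}$ in the setup of \cite{KSY,KSY2}), one obtains $q_*E\simeq \mathrm{colim}_{S\to E}\,q_*S$. Applying the contravariant functor $\mathbf{Perf}(-)$, which sends colimits of prestacks to limits in $\mathbf{Cat}_\infty$, yields
$$\mathbf{Perf}(q_*E)\simeq \underset{S\to E}{\mathrm{lim}}\,\mathbf{Perf}(q_*S),$$
thereby reducing the statement to the affine case.

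Second, I would establish, for each affine test object $S\to E$, the equivalence
$$\mathbf{Perf}(q_*S)\simeq \mathbf{Perf}(\mathcal{A}_S^{\bullet}\otimes\D_X).$$
Since $X$ is smooth and $\D$-affine (assumed throughout the paper, cf.\ \cite[Thm.\ 1.6.5]{HT}), the global-sections functor induces an equivalence between quasi-coherent sheaves on the ind-affine $\D$-scheme $q_*S$ and modules over the structure algebra $\mathcal{A}_S^{\bullet}:=\mathcal{O}(q_*S)$ equipped with their canonical $\D_X$-action, i.e.\ $\mathbf{QCoh}(q_*S)\simeq \mathbf{Mod}(\mathcal{A}_S^{\bullet}\otimes\D_X)$. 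Restricting to compact objects on both sides then gives the desired identification of perfect complexes. Combining the two steps yields the claim.

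The main technical obstacle is the second step when $S$ is a genuinely derived affine, so that $\mathcal{A}_S^{\bullet}$ is a nontrivial dg-$\D$-algebra rather than a classical one. One must verify that the Morita-type equivalence between $\D$-affine objects and their module categories is compatible with the dg enrichment: concretely, that $q_*$ sends dg-affine schemes to dg-$\D$-affine objects whose structure sheaves agree, up to quasi-isomorphism, with $\mathcal{A}_S^{\bullet}$, and that the perfect-module condition corresponds on both sides under the equivalence. Under the finiteness hypothesis in force (Assumptions \ref{Assumptions}), this reduces to verifying the statement on classical truncations $\tau^{\leq -n}$ and then propagating the equivalence up the $\D$-Postnikov tower, using the flatness and truncation results of Subsection \ref{ssec: Finiteness}, notably Proposition \ref{prop: Flat preserves n-truncations}.
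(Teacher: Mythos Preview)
Your overall two-step strategy---reduce to the affine case, then identify $\mathbf{Perf}(q_*S)$ with $\mathbf{Perf}(\mathcal{A}_S^{\bullet}\otimes\D_X)$---matches the paper's approach. However, there is a genuine error in your first step: $q_*$ (Weil restriction along $q:X\to X_{DR}$) is the \emph{right} adjoint to $q^*$, not the left adjoint. The paper itself records the adjunction as $(p_{dR}^*,p_{dR*})$ in the proof of Proposition~\ref{prop: Moduli of PDE}. A right adjoint has no reason to preserve the colimit $E\simeq\mathrm{colim}_{S\to E}S$, so your identification $q_*E\simeq\mathrm{colim}_{S\to E}q_*S$ is unjustified as stated.

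The paper circumvents this by working on the dual algebraic side. For $S=\mathrm{Spec}_X(R^{\bullet})$ affine over $X$, the structure sheaf $\mathcal{A}_S^{\bullet}=\mathcal{O}(q_*S)$ is computed by the homotopy-coherent jet functor $(For_{\D})_{comm}^L$ applied to $R^{\bullet}$; this functor \emph{is} a left adjoint (to the forgetful functor (\ref{eqn: Forget D-Alg})) and hence commutes with colimits of commutative algebras. Combined with the fact that $\mathbf{Perf}$ sends colimits to limits, this is what yields the passage from affine to general $E$. Your second step is correct in spirit and agrees with the paper's treatment of the affine case; the additional Postnikov-tower argument you propose for the derived enhancement is not needed, since the left-adjoint description of the jet functor already operates at the level of derived commutative algebras.
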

\begin{proof}
First, consider the affine case. That is, suppose $E\rightarrow X$ affine over $X$, so $E\simeq \mathrm{Spec}_X(R)$ for $R^{\bullet}\in \mathrm{QCAlg}(X).$ Then $\mathcal{A}^{\bullet}:=\mathcal{O}(q_*(E))\simeq (For_{\D})_{comm}^L(R^{\bullet})\in \mathrm{QCAlg}(X_{DR}),$ where $For_{\D}^L$ is the left-adjoint to (\ref{eqn: Forget D-Alg}). Then $\mathbf{Perf}(q_*E)$ is equivalent to 
$\mathbf{Perf}(\mathcal{A}^{\bullet}\otimes \D_X)$. By the left-adjoint property, it commutes with colimits and the assignment of the $\infty$-category $\mathbf{Perf}$ sends colimits to limits.
\end{proof}
In particular, if $R=\mathrm{Sym}(\F)$ for $\F \in \mathbf{QCoh}(X),$ then $\mathcal{A}^{\bullet}\simeq Sym(\D_X\otimes \F)$.

From (\ref{eqn: ProCotangent Cofiber}), it follows the assignment
$\iota:Z\subset q_*(E)\mapsto T^*(Z/q_*E),$ thus in particular $Z\mapsto\mathbb{L}_{Z/X_{DR}}$ is functorial and fournishes an $\infty$-functor
\begin{equation}
    \label{eqn: L assignment}
\mathbb{L}:(Z\subset q_*E)\in\mathrm{PDE}_X(E)^-\longrightarrow \mathbb{L}_{Z/X_{DR}}\in\mathbf{Perf}_{\mathcal{D}_X}(Z),
\end{equation}
where $\mathrm{PDE}_X(E)^{-},$ is the sub-$\infty$-groupoid of $X_{DR}$-prestacks relative to $q_*E,$ which define closed $\D$-subschemes on classical truncations which are moreover $\D$-finitary. See \cite[Observation. 6.2]{KSY}, for details.
Here we promote it to an $\infty$-categorical assignment.
\begin{prop}
\label{prop: Moduli of PDE} 
    Let $X$ be a smooth and proper $k$-scheme with de Rham space $X_{DR}.$ Consider the assignment
    \begin{eqnarray}
        \label{eqn: Derived PDE Moduli}
\mathbb{R}\underline{\mathrm{PDE}}_{X}(-):\mathbf{PStk}_{/X_{dR}}&\rightarrow& \mathbf{PStk}_{/^{\mathrm{cl-emb}}}^{\mathcal{D}-\mathrm{fin},\mathrm{Tor},\simeq}
\\
(Z\rightarrow X_{dR})&\mapsto& \mathbb{R}\underline{\mathrm{PDE}}_{X}(Z) \nonumber
\end{eqnarray}
where $\mathbb{R}\underline{\mathrm{PDE}}_{X}(Z)$ is the maximal sub $\infty$-groupoid consisting of the space of 
closed substacks of $Z$ i.e. generalized non-linear PDEs which are locally of finite Tor-amplitude.
Then, the functor \emph{(\ref{eqn: Derived PDE Moduli})} satisfies both $X$-local $fppf$-descent and proper-surjective descent and respects formally étale morphisms of relative prestacks.
\end{prop}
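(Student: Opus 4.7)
The plan is to verify the three descent properties in turn, exploiting the decomposition of $\mathbb{R}\underline{\mathrm{PDE}}_X(Z)$ as the maximal sub-$\infty$-groupoid of closed embeddings $W\hookrightarrow Z$ that are simultaneously $\D$-finitary (admitting perfect pro-cotangent complexes via \eqref{eqn: LocVerd}) and locally of finite Tor-amplitude. Both selection conditions are pointwise and are stable under (and detectable after) pullback along the relevant covers, so the verification reduces to (a) descent for the space of closed embeddings in the $\D$-geometric derived setting, and (b) the $fppf$-local and proper-surjective-local nature of perfection and of finite Tor-amplitude.

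For the $X$-local $fppf$-descent, first observe that on affine derived $\D$-charts a closed embedding $W\hookrightarrow Z$ is equivalent to a $\D$-epimorphism of the associated derived $\D$-algebras. By \autoref{prop: D-epi is local}, being a $\D$-epimorphism is $\D$-étale local, and hence $fppf$-local after $X$-base change, while faithfully flat descent for the algebras themselves follows from the standard Lurie descent theorem applied to commutative monoid objects in $\mathbf{Mod}(\D_X)$. The $\D$-finitariness condition descends because perfection of objects in $\mathbf{Perf}_{\D_X}(W)$ is $fppf$ local in the usual way, and local finiteness of Tor-amplitude is $fppf$ local by \autoref{prop: Flat preserves n-truncations} together with \autoref{prop: D-flat result}. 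Proper-surjective descent proceeds along the same lines, but appeals to the universal effective descent of proper surjections for $\mathcal{O}$-modules on $X_{DR}$-prestacks, transported to the $\D$-module setting via the monadic adjunction associated with $\mathrm{For}_{\D}:\mathbf{Mod}(\D_X)\to \mathbf{Mod}(\mathcal{O}_X)$ of \eqref{eqn: Forget D-Alg}, which creates the relevant limits.

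For the final property, a formally étale morphism $f:Z_1\to Z_2$ in $\mathbf{PStk}_{/X_{dR}}$ satisfies $\mathbb{L}_{Z_1/Z_2}\simeq 0$, so the cofiber sequence \eqref{eqn: ProCotangent Cofiber} collapses to an equivalence $f^{\sharp}\mathbb{L}_{Z_2/X_{DR}}\xrightarrow{\sim}\mathbb{L}_{Z_1/X_{DR}}$. Consequently, for any closed substack $W\hookrightarrow Z_2$, the base-change $W\times_{Z_2}Z_1\hookrightarrow Z_1$ is again a closed embedding; by functoriality of the assignment \eqref{eqn: L assignment} one identifies $\mathbb{L}_{(W\times_{Z_2}Z_1)/X_{DR}}$ with a perfect pull-back of $\mathbb{L}_{W/X_{DR}}$, thereby preserving both $\D$-finitariness and finite Tor-amplitude and yielding the desired functoriality. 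The main obstacle I anticipate is the proper-surjective descent: the analogous result for ordinary quasi-coherent sheaves is already delicate (a consequence of $h$- or $v$-descent), and in the $\D$-linear derived setting one must argue directly that proper surjective covers of $\D$-prestacks descend $\D$-linear closed embeddings, most plausibly by first descending the underlying $\mathcal{O}$-module structure and then lifting along the monadic comparison to $\mathcal{D}_X$-modules.
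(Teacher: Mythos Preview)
Your descent argument takes a different route from the paper's. You verify condition-by-condition that $\D$-epimorphisms, perfection, and finite Tor-amplitude are each $fppf$-local (resp.\ proper-surjective-local), invoking Propositions~\ref{prop: D-epi is local}, \ref{prop: Flat preserves n-truncations}, \ref{prop: D-flat result}. The paper instead argues topologically: an \'etale map $U'\to U$ induces a schematic \'etale surjection $U_{dR}'\to U_{dR}$, hence an $fppf$ cover; then one observes that $\mathbb{R}\underline{\mathrm{PDE}}_X$ satisfies $h$-descent on $\mathrm{DGSch}_{aft}^{aff}$ (the $h$-topology being generated by Zariski and proper-surjective covers), and concludes since $fppf$-covers are $h$-covers. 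Your approach is more explicit about which ingredients are being checked; the paper's is shorter but leans on $h$-descent as a black box. Both are reasonable, and you correctly flag the proper-surjective case as the delicate one.

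However, you have misread the ``formally \'etale'' clause. The paper does \emph{not} mean morphisms $f:Z_1\to Z_2$ in $\mathbf{PStk}_{/X_{dR}}$ with $\mathbb{L}_{Z_1/Z_2}\simeq 0$; it means formally \'etale morphisms $f:X\to Y$ of the \emph{base}, and asks for an induced equivalence $f_{PDE}^*:\mathbb{R}\underline{\mathrm{PDE}}_{Y}(Z_Y)\to \mathbb{R}\underline{\mathrm{PDE}}_{X}(Z_X)$ where $Z_X=\mathrm{Jet}_{dR,X}^{\infty}(f^*E_Y)$. The proof proceeds by writing down the base-change square relating $p_{dR*}^X,p_{dR*}^Y,f^*,f_{dR}^*$ and invoking the Beck--Chevalley condition to obtain $f_{dR}^*\circ p_{dR*}^Y\simeq p_{dR*}^X\circ f^*$ when $f$ is \'etale. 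Your argument via the cofiber sequence \eqref{eqn: ProCotangent Cofiber} establishes something else entirely (functoriality of the assignment $Z\mapsto \mathbb{R}\underline{\mathrm{PDE}}_X(Z)$ along formally \'etale maps of the argument), which, while not unreasonable as a statement, is not what is being claimed here.
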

\begin{proof}
The desired functor is given by composition
$$\mathbf{DStk}_{/X}\xrightarrow{p_{dR*}}\mathbf{DStk}_{/X_{dR}}\rightarrow \big(\mathbf{PStk}_{/^{cl-emb}}^{\mathcal{D}-\mathrm{fin},\mathrm{Tor}}\big)^{\simeq},$$
sending $(E\rightarrow X)$ to $(\mathrm{Jet}_{dR}^{\infty}(E)\rightarrow X_{dR})$ and assigning to this relative pre-stack the space of closed embeddings $i:\mathcal{Y}\hookrightarrow \mathrm{Jet}_{dR}^{\infty}(E)$ which are $\mathcal{D}$-finitary non-linear PDEs. In particular, $\mathbb{L}_{i}$ exists and is dualizable.
Equivalently, the functor factors via
$$\mathbf{DStk}_{/X}\xrightarrow{p_{dR*}}\mathbf{PStk}_{X}(\mathcal{D}_X)^{\mathcal{D}-fin}\rightarrow \big(\mathbf{PStk}_{/^{cl-emb}}^{\mathrm{laft},\mathrm{Tor}}\big)^{\simeq}.$$

Regarding Tor-amplitude $0$ is straightforwardly seen. Namely, for every $\mathcal{D}_X$-scheme $\mathcal{S}$ put
\begin{equation}
    \label{eqn: RPDE-DX}
    \mathrm{PDE}_{\mathcal{D}_X}(Z)^{\mathrm{Tor}_0}(\mathcal{S})\subset \mathbf{dSch}_{\mathcal{D}_X}(Z\times \mathcal{S}),
    \end{equation}
to be the non-full sub category whose objects are derived $\mathcal{D}_X$-schemes,
$i:\mathcal{Y}\rightarrow Z\times \mathcal{S},$
which are homotopically $\mathcal{D}_X$-flat over $\mathcal{S}$ and finitely $\mathcal{D}_X$-presented over $Z\times \mathcal{S}$ such that $\mathcal{H}_{\mathcal{D}}^0(i):\mathcal{Y}^{cl}\rightarrow Z\times \mathcal{S}^{cl}$ is a $\mathcal{D}_X$-closed immersion. Then, since for every morphism of derived $\mathcal{D}_X$-schemes $\varphi:\mathcal{S}'\rightarrow \mathcal{S}$ there is an induced (by pull-back) morphism of $\infty$-groupoids,
$$\mathrm{PDE}_{\mathcal{D}_X}(Z)^{\mathrm{Tor}_0}(\mathcal{S})\xrightarrow{\varphi^*}\mathrm{PDE}_{\mathcal{D}_X}(Z)^{\mathrm{Tor}_0}(\mathcal{S}'),$$
we obtain the desired functor 
$\underline{\mathrm{PDE}}_{\mathcal{D}_X}(Z)^{\mathrm{Tor}_0}$ from $\mathbf{dSch}_{\mathcal{D}_X}\rightarrow \infty\mathbf{Grpd}.$ 

Now, let us prove (1). Notice that if $U'\rightarrow U$ is étale the induced map $U_{dR}'\rightarrow U_{dR}$ is schematic, étale and surjective. Thus yields a cover for the fppf topology. Similar reasoning is applicable for proper-surjective descent. Then noticing that fppf descent combines both Nisnevich and finite flat descent, we see that $\mathbb{R}\underline{\mathrm{PDE}}_X$ satisfies $h$-descent on $\mathrm{DGSch}_{aft}^{aff}$ i.e. descent with respect to the $h$-topology generated by Zariski covers and proper-surjective covers. The claim then follows noticing fppf-covers are in particular, $h$-covers.

To prove (2), consider $f:X\rightarrow Y$ in $\mathbf{PStk},$ and let $E_Y\in \mathbf{PStk}_{/Y}.$ 
We claim that there is an induced functor
$$f_{PDE}^*:\mathbb{R}\underline{\mathrm{PDE}}_{Y}(Z_Y)\rightarrow \mathbb{R}\underline{\mathrm{PDE}}_{X}(Z_X),$$
for $Z_Y:=p_{dR*}^Y(E_Y)$ and for some $Z_X,$ to be identified, which is moreover an equivalence when $f$ is formally étale. 
To this end, consider the commutative diagram,
\[
\adjustbox{scale=.85}{
\begin{tikzcd}
\mathrm{dStk}\times\mathrm{Fun}([1]\times \mathrm{dAff}^{op},\mathrm{Spc})\times\{Y\}\arrow[r,"q_*^Y"] \arrow[d,"f^*"] & \mathrm{dStk}\times\mathrm{Fun}([1]\times \mathrm{dAff}^{op},\mathrm{Spc})\times\{Y_{DR}\}\arrow[d,"f_{DR}^*"]
\\
\mathrm{dStk}\times\mathrm{Fun}([1]\times \mathrm{dAff}^{op},\mathrm{Spc})\times\{Y\times_{Y_{DR}}X_{DR}\}\arrow[r] & \mathrm{dStk}\times\mathrm{Fun}([1]\times \mathrm{dAff}^{op},\mathrm{Spc})\times\{X_{DR}\}
\end{tikzcd}}
\]
where we fiber over $\mathbf{PStk}.$ There is a corresponding base-change square, where we indicate all right-adjoints
\[
\begin{tikzcd}
 \mathbf{PStk}_{Y\times_{Y_{dR}}X_{dR}}\arrow[d,shift left=.75ex,"f_*"]\arrow[r, "p_{dR*}"] & \mathbf{PStk}_{X_{dR}} \arrow[d,shift left=.75ex,"f_{dR*}"]
\\
 \mathbf{PStk}_{Y}\arrow[u,shift left=.75ex,"f^*"] \arrow[r, "p_{dR*}^Y"] & \mathbf{PStk}_{Y_{dR}}\arrow[u,shift left=.75ex,"f_{dR}^*"]
\end{tikzcd}
\]
understanding the $(p_{dR}^*,p_{dR*})$-adjunctions for $X,Y.$ 
On the level of points, our functor is defined by the induced pull-back functor $f_{dR}^*:\mathbf{PStk}_{Y_{dR}}\rightarrow \mathbf{PStk}_{X_{dR}},$ by
$$f_{PDE}^*\big(\iota_Y:\mathcal{Y}_Y\hookrightarrow \mathrm{Jet}_{dR,Y}^{\infty}(E_Y)\big):=\big(\iota_X:f_{dR}^*(\mathcal{Y}_X)\hookrightarrow f_{dR}^*\mathrm{Jet}_{dR,Y}^{\infty}(E_Y)\big).$$
Set
$Z_X:=\mathrm{Jet}_{dR,X}^*(f^*E_Y).$ Then, for étale morphisms there are equivalences
$$f^*\circ p_{dR}^{Y*}\circ p_{dR*}^Y\simeq p_{dR}^{X*}\circ p_{dR*}^X\circ f^*,$$
acting from $\mathbf{PStk}_{Y}\rightarrow \mathbf{PStk}_{X}$.
The Beck-Chevalley-Lurie condition implies the equivalence of functors $f_{dR}^*\circ p_{Y,dR*}\simeq p_{X,dR*}\circ f^*,$ and thus the result.
\end{proof}
It is possible to include descent over $X$, although we do not need it.
\begin{rmk}
By Proposition \ref{prop: Moduli of PDE} (1), we may write
$\mathbb{R}\underline{\mathrm{PDE}}_{X}\simeq \underset{U\rightarrow X}{lim}\hspace{1mm}\mathbb{R}\underline{\mathrm{PDE}}_{U},$
and in particular, for (\ref{eqn: RPDE-DX}), via stackification, $X$ may be a smooth Deligne-Mumford stack over $k.$ Thus it makes sense to write
\begin{equation}
    \label{eqn: RPDE Tor0}
\mathbb{R}\mathrm{PDE}_{\mathcal{D}_X}^{\mathrm{Tor}_0}=\underset{U\rightarrow X}{lim}\hspace{1mm}\mathbb{R}\mathrm{PDE}_{\mathcal{D}_U}^{\mathrm{Tor}_0},
\end{equation}
 with the limit (\ref{eqn: RPDE Tor0}) taken over the small étale site of $X$ inside the $\infty$-category of presentable dg-categories.
 \end{rmk}

By assumption of being $\D$-finitary, we have that by pull-back along every solution for each test scheme $T$, giving a functor $ev_{u_T}^*:\mathbf{Perf}_{\D_X}(Z)\rightarrow \mathbf{Perf}(T\times X_{DR}),$ simply denoted here by $\varphi$ (see \cite[Proposition 6.45]{KSY}).

It is the pull-back (on perfect complexes, as per Assumptions \ref{Assumptions}), obtained for each test scheme $T$ from the diagram in $\mathbf{PStk}_{X_{DR}},$
\begin{equation}
    \label{eqn: SolDiagram}
\begin{tikzcd}
T\times X\arrow[d,"id_T\times q^X"]\arrow[rr, bend left, "ev_{u_T}'"] \arrow[r,"u\times id_X"] & Z\times_{X_{DR}}X\arrow[d]\arrow[r] & Z
\\
T\times X_{DR}\arrow[r,"ev(u_T)"] & Z& 
\end{tikzcd}
\end{equation}
The corresponding assignment will be denoted by
\begin{equation}
    \label{eqn: PhiL} \mathbb{L}_{\varphi}:=\varphi^*\mathbb{L}:\mathrm{PDE}_X(E)^-\rightarrow \mathbf{Perf}(\mathcal{D}_X).
    \end{equation}
    Assume that $X$ is a proper $k$-scheme of finite-type. Then for each test scheme $T$, $\mathbb{L}_{u_T}(Z)=\mathbb{L}_{Z/X_{DR},u_T}$ is as in (\ref{eqn: Tangent D}), and gives a sheaf of dg-$\mathcal{O}_T$-modules via the canonical push-forward along the map $a_{X_{DR}}:X_{DR}\rightarrow Spec(\mathbb{C}),$
$$(id_T\times a_{X_{DR}})_*:\mathbf{Perf}(T\times X_{DR})\rightarrow \mathbf{Perf}(T).$$

Since $X$ is smooth $\mathcal{E}_X^{\mathbb{R}}$ is well-defined sheaf of rings on $T^*X$, with $\mathcal{E}_X^{\mathbb{R}}-\mathrm{dg}_{T^*X}^{qcoh},$ the $\infty$-category of quasi-coherent sheaves on $T^*X$ of complexes of $\mathcal{E}_X^{\mathbb{R}}$-modules.
Via pull-back along $\varphi$ and tensorization, from (\ref{eqn: PhiL}) there is an associated functor denoted
$\varphi^*\mathbb{L}\otimes \mathcal{E}_X^{\mathbb{R}},$ acting by 
\begin{equation}
    \label{eqn: Lambda Localization}
    \mathrm{PDE}_X(E)^-\rightarrow \mathbf{Perf}(\mathcal{D}_X)\rightarrow \mathcal{E}_X^{\mathbb{R}}-\mathrm{dg}_{T^*X}^{qcoh}.
    \end{equation}
    
% \cite[Proposition 3.10]{KSY2}, we make the identification, 
%$\mathcal{O}_{Z}-\mathrm{mods}\big(\mathcal{D}_X-\mathrm{dg}_{X}^{qcoh}\big)=\mathbf{QCoh}_{\mathcal{D}_X}(\mathcal{Z}).$

Suppose that $\Lambda\subset T^*X$ is a closed conical subset with
$\mathbf{QCoh}_{\Lambda}(\mathcal{D}_X)\subset \mathbf{QCoh}(\mathcal{D}_X),$ 
the full subcategory of quasi-coherent $\mathcal{D}$-modules $\mathcal{M}^{\bullet}$ for which 
$Char(\mathcal{M}^{\bullet})\subset \Lambda\subset T^*X.$

\begin{rmk}
    In localization terms, one considers $\mathbf{QCoh}_{\Lambda}(\mathcal{E}_X^{\mathbb{R}}),$
of $\mathcal{E}_X^{\mathbb{R}}$-modules localized by the null-system of coherent modules for which their support is not contained in $\Lambda.$
\end{rmk}

Consider the inclusion $j_{\Lambda}:T^*X\backslash \Lambda\hookrightarrow T^*X$ and the induced functor
$$j_{\Lambda}^*:\mathcal{E}_X^{\mathbb{R}}-\mathrm{dg}_{T^*X}^{qcoh}\rightarrow \mathcal{E}_X^{\mathbb{R}}-\mathrm{dg}_{T^*X\backslash \Lambda}^{qcoh},$$
with $\mathcal{E}_X-\mathrm{dg}_{\Lambda,T^*X}^{qcoh}\simeq\mathrm{hofib}(j_{\Lambda}^*).$
Combining (\ref{eqn: Lambda Localization}) with $j_{\Lambda}^*$ we may form the pull-back diagram
\begin{equation}
    \label{eqn: RPDEDiagram}
\begin{tikzcd}
    \mathbb{R}\underline{\mathrm{PDE}}_X^{\Lambda,\varphi}(\mathcal{Z})\arrow[d]\arrow[r] & \mathrm{hofib}(j_{\Lambda}^*)\arrow[d]
    \\
    \mathbb{R}\underline{\mathrm{PDE}}_X(\mathcal{Z})\arrow[r] & \mathcal{E}_X^{\mathbb{R}}-\mathrm{dg}_{T^*X}^{qcoh}.
\end{tikzcd}
\end{equation}
Thus for every solution $\varphi$, obtained via (\ref{eqn: SolDiagram}), by Assumptions \ref{Assumptions} the construction does not depend on the solution, and (\ref{eqn: RPDEDiagram}) defines a sub $\infty$-groupoid,
\begin{equation}
    \label{eqn: SubGrpd}
\mathbb{R}\underline{\mathrm{PDE}}_{X}^{\Lambda,\varphi}(\mathcal{Z}):=\mathbb{R}\underline{\mathrm{PDE}}_X(\mathcal{Z})\times_{\mathcal{E}_X^{\mathbb{R}}-\mathrm{dg}_{T^*X}^{qcoh}}\mathcal{E}_X^{\mathbb{R}}-\mathrm{dg}_{\Lambda,T^*X}^{qcoh}.
\end{equation}
Objects of (\ref{eqn: SubGrpd}) may be interpreted as PDEs (closed $\D$-subschemes in $\mathcal{Z}$), with
$Char_{\mathcal{D}}(\mathcal{Y})\subset \Lambda,$
since the support of $\varphi^*\mathbb{L}_{\mathcal{Y}}\otimes \mathcal{E}_X^{\mathbb{R}}$ is contained in $\Lambda.$
Omit reference to $\varphi$, write
$\mathbb{R}\underline{\mathrm{PDE}}_X^{\Lambda,*}(\mathcal{Z}).$

We may vary over $X$ by considering a morphism
$f:Z\rightarrow X$ and supposing that $\Sigma_Z\subset Z,\Sigma_X\subset X$ are closed sub-manifolds. Denote $f_{\Sigma}$ the restriction and assume it is a closed embedding. There is an induced microlocal diagram,
\begin{equation}
    \label{eqn: Microlocal Diagram}
\begin{tikzcd}
    T^*Z & \arrow[l,"j_d"] Z\times_XT^*X\arrow[d] \arrow[r,"j_{\pi}"] & T^*X
\\
T_{\Sigma_Z}^*Z\arrow[u] \arrow[d,"\pi_{\Sigma_Z}"] & \arrow[l,"j_d"] \Sigma_Z\times_{\Sigma_X}T_{\Sigma_X}^*X\arrow[d,"\pi"] \arrow[r,"j_{\pi}"] & T_{\Sigma_X}^*X\arrow[u]\arrow[d,"\pi_{\Sigma_X}"]
\\
\Sigma_Z \arrow[r,"id"] & \Sigma_X \arrow[r,"j"] & \Sigma_X
\end{tikzcd}
\end{equation}

As a special case in (\ref{eqn: Microlocal Diagram}), take $(\Sigma\subset X),$ and fix a $\mathcal{D}_X$-algebra $\mathcal{B},$ concentrated in degree zero.

\begin{defn}
\label{eqn: Elliptic Moduli}
\normalfont
The space of \emph{$\Sigma$-elliptic $X_{DR}$-spaces} is $\mathbb{R}\underline{\mathrm{PDE}}_{\Sigma\subset X}^{e}(\mathcal{Z}):=\mathbb{R}\underline{\mathrm{PDE}}_X^{\Sigma \times_X T_X^*X,*}(\mathcal{Z}).$
\end{defn}
Definition \ref{eqn: Elliptic Moduli} gives a dg-enhancement of \cite[Definition 5.11]{KSh}. Indeed, it states that $supp\big(\varphi^*\mathbb{L}_{\mathcal{Y}}\otimes\mathcal{E}_X^{\mathbb{R}}\big)\cap T_{\Sigma}^*X\subset \Sigma\times_X T_X^*X,$
holds for all $\varphi$ as the condition obtained via pulling back 
$$Char_{\mathcal{D}}(\mathcal{Y})\times \RS_X(\mathcal{Y})\cap T_{\Sigma}^*X\subset \RS_X(\mathcal{Y})\times_X \Sigma \times_X T_X^*X,$$
along each point $\varphi$ of $\RS_X(\mathcal{Y}).$
\subsubsection{$\D$-Geometric Grassmannians}
\label{ssec: D-Geometric Grassmannians}
Fix $\underline{k}=(0\leq k_1<k_2<\cdots<k_{\ell})$ for $\ell\geq 1.$
Let $\mathcal{M}^{\bullet}$ be a compact object which is connective i.e. $\mathcal{H}_{\mathcal{D}}^i(\mathcal{M}^{\bullet})=0,\forall i>0.$
Define the functor
$$\mathbb{R}\underline{\mathbf{Flag}}_{\mathcal{D},Z}:\mathbf{dSch}(\D_X)_{/Z}\rightarrow \mathbf{Spc}, \big(\eta:\mathcal{T}=\mathbb{R}\mathrm{Spec}_{\mathcal{D}}(\mathcal{C})\rightarrow Z\big)\mapsto \mathbb{R}\mathbf{Flag}_{\mathcal{D},Z}(\mathcal{T}),$$
where 
$$\mathbb{R}\mathbf{Flag}_{\mathcal{D},Z}(\mathcal{T})\subseteq \mathrm{Fun}(\Delta^{\ell},\mathbf{Mod}(\mathcal{O}_{\mathcal{T}}[\mathcal{D}_X])^{\leq 0}\big)^{\simeq},$$
spanned by:
\begin{equation}
    \begin{cases}
\EuScript{M}_{\mathcal{T}}^{\bullet}:=\big(\eta^*\mathcal{M}\xrightarrow{\varphi_{\ell}}M_{\ell}\rightarrow M_{\ell-1}\xrightarrow{\varphi_{\ell-1}}\cdots\xrightarrow{\varphi_0}M_1\big),
\\
\varphi_i,\text{ is surjective morphism of } \mathcal{H}_{\mathcal{D}}^0(\mathcal{O}_{\mathcal{T}})[\mathcal{D}_X]-\text{modules},
\\
M_i \text{ is a vector $\D$-bundle of } \mathrm{rank}(M_i)=k_i.
\end{cases}
\end{equation}
In particular, each $M_{\ell}$ will be induced as an $\mathcal{A}[\mathcal{D}]$-module of finite-presentation and its rank is given by the rank of the underlying $\mathcal{O}_X$-module.

The rank $k$-derived $\D$-Grassmannian over a $\mathcal{D}$-smooth $\mathcal{D}$-scheme $Z$, to be denoted by
$\mathbb{R}\underline{\mathbf{Grass}}_{\mathcal{D},Z}
(k,\mathcal{M}),$
is obtained straightforwardly by putting $\ell=1$ with $\underline{k}=(k).$ Its more explicit construction is now given by considering an analog of a classical Grassmannian adapted to the dg-$\D$-setting. Namely, for a perfect complex of $\D_X$-modules, set
$$\mathrm{Grass}_{\mathcal{D}}(\mathcal{M}^{\bullet}):=\mathrm{Grass}\big(\mathcal{H}_{\mathcal{D}}^*(\mathcal{M})\big)=\prod_j\mathrm{Grass}\big(\mathcal{H}_{\mathcal{D}}^j(\mathcal{M})\big).$$
In other words,
$$\mathrm{Grass}_{\mathcal{D}}(\mathcal{M}^{\bullet})=\prod_j\bigg(\bigsqcup_{i=0}^{\mathrm{dim}(\mathcal{H}_{\mathcal{D}}^j(\mathcal{M}))}\mathrm{Grass}\big(i;\mathcal{H}_{\mathcal{D}}^j(\mathcal{M}^{\bullet})\big)\bigg),$$
where 
$\mathrm{Grass}\big(i;\mathcal{H}_{\mathcal{D}}^j(\mathcal{M}^{\bullet})\big):=\big\{\mathcal{N}\subseteq \mathcal{H}_{\mathcal{D}}^j(\mathcal{M}^{\bullet})|\mathrm{dim}(\mathcal{N})=i\big\}.$
It represents the functor of points $\mathbf{Gr}\big(\mathcal{H}_{\mathcal{D}}^*(\mathcal{M}^{\bullet})\big):\mathrm{CAlg}_{X}(\mathcal{D}_X)\rightarrow Sets,$ mapping a classical $\D$-algebra $\mathcal{C}$ to the set of sub-objects 
$$\{\mathcal{N}\subset \mathcal{H}_{\mathcal{D}}^*(\mathcal{M})\otimes \mathcal{C}|\mathcal{N}\text{ cofibrant }\}.$$

\begin{cons}
    \label{cons: D-Prestack of Grassmannian}
    \normalfont 

Fix our $\D$-affine $k$-scheme $X$ such that $\mathcal{A}^{\bullet}$ is a derived $\D_X$-algebra for which $\mathbb{L}_{\mathcal{A}}$ exists and is perfect and dualizable with $\mathcal{A}[\mathcal{D}]$-dual $\mathbb{T}_{\mathcal{A}}^{\ell}$. In this case, we may locally write this complex as a complex of free $\mathcal{A}[\mathcal{D}]$-modules of finite rank. 
Consider the classical $\D$-prestack
$$\underline{\mathcal{G}\mathrm{rass}}_{\mathcal{D}}(\mathcal{A}):\mathrm{CAlg}_X(\mathcal{D}_X)\rightarrow \mathbf{Spc},$$
defined for each commutative $\D$-algebra $\mathcal{C}$ by the groupoid of $\mathcal{C}$-points of sequences:
$$\big\{\mathcal{N}^{\bullet}\hookrightarrow \mathcal{L}^{\bullet}\xrightarrow{\varphi} \mathbb{T}_{\mathcal{A}}^{\ell}\otimes\mathcal{C}| \mathcal{H}_{\mathcal{D}}^i(\mathcal{N}^{\bullet})\text{ is flat over } \mathcal{C}$$
$$\text{ and }\mathcal{H}_{\mathcal{D}}^i(\mathcal{N}^{\bullet})\rightarrow \mathcal{H}_{\mathcal{D}}^i(\mathbb{T}_{\mathcal{A}}^{\ell})\otimes 
 \mathcal{C} \text{ is an injective } \mathcal{D}-\text{morphism }\forall i\big\}.$$
\end{cons}

\begin{rmk}
If $\mathcal{A}^{\bullet}$ is a $\D$-smooth discrete commutative $\D$-algebra corresponding to a jet construction, then the inclusion $\mathcal{H}_{\mathcal{D}}^0(\mathcal{N})\hookrightarrow \mathcal{H}_{\mathcal{D}}^0(\Theta_{\mathcal{A}}^{\ell})\otimes \mathcal{C},$ is to be thought of as corresponding to the case of a parameterized natural inclusion 
$$T_{z}\mathrm{Spec}_{\mathcal{D}}(\mathcal{A}/\mathcal{I})\hookrightarrow F^*T_{z}\mathrm{Spec}_{\mathcal{D}}(\mathcal{A}),$$ of global sections of tangent $\D$-schemes arising from a $\D$-sub-scheme e.g. a morphism of $\D$-schemes $F:\mathrm{Spec}_{\mathcal{D}}(\mathcal{B})\rightarrow \mathrm{Spec}_{\mathcal{D}}(\mathcal{A}).$
\end{rmk}

We now construct a derived enhancement of the classical prestack given by Construction \ref{cons: D-Prestack of Grassmannian}.

To this end, consider the assignment for every perfect complex of $\mathcal{A}[\mathcal{D}]$-modules $\mathcal{M}^{\bullet}$, which we will end up taking to be $\mathbb{T}_{\mathcal{A}}^{\ell}$:
\begin{equation}
    \label{eqn: Pre Derived Grassmannian}
\mathbb{R}^{pre}\underline{\mathcal{G}\mathrm{rass}}_{\mathcal{D}}(\mathcal{M}^{\bullet})(-):\mathbf{CAlg}(\mathcal{D}_X)\rightarrow \mathbf{Spc},
\end{equation}
which prescribes to each $\D$-algebra $\mathcal{C}^{\bullet}$ the space
$$\mathbb{R}^{pre}\underline{\mathcal{G}\mathrm{rass}}_{\mathcal{D}}(\mathcal{M}^{\bullet})(\mathcal{C}^{\bullet}):=\big\{\text{Sequences } \mathcal{N}^{\bullet}\hookrightarrow \mathcal{R}^{\bullet}\xrightarrow{qis}\mathcal{M}^{\bullet}\otimes^{\mathbb{L}}\mathcal{C}^{\bullet}\text{ in } \DG(\mathcal{C}^{\bullet})^{cofib}\big\}.$$
The simplicial $\D$-presheaf (\ref{eqn: Pre Derived Grassmannian}) must be modified one step further to obtain a derived enhancement of $\mathcal{G}rass_{\mathcal{D}}(\mathcal{M}^{\bullet})$.

For each $\mathcal{C}^{\bullet}\in \mathbf{CAlg}(\mathcal{D}_X),$ there is a canonical morphism $\mathcal{C}^{\bullet}\rightarrow \mathcal{H}_{\mathcal{D}}^0(\mathcal{C}^{\bullet}),$ thus by 
direct comparison of 
 the definitions there is a canonical morphism
\begin{equation}
    \label{eqn: CanonIncl}
\underline{\mathcal{G}\mathrm{rass}}_{\mathcal{D}}(\mathcal{M}^{\bullet})\big(\mathcal{H}_{\mathcal{D}}^0(\mathcal{C}^{\bullet})\big)\rightarrow \mathbb{R}^{pre}\underline{\mathcal{G}\mathrm{rass}}_{\mathcal{D}}(\mathcal{M}^{\bullet})\big(\mathcal{H}_{\mathcal{D}}^0(\mathcal{C}^{\bullet})\big),
\end{equation}
which yields the following result.

\begin{prop}
Consider Construction \ref{cons: D-Prestack of Grassmannian} and the classical prestack $\underline{\mathcal{G}\mathrm{rass}}_{\D}(\A).$ 
Then, the morphism (\ref{eqn: CanonIncl}) induces a Cartesian diagram,
\begin{equation}
    \label{eqn: Grassmannian Fiber product}
    \begin{tikzcd}
\mathbb{R}\underline{\mathcal{G}}_{\mathcal{D}}(\mathcal{M}^{\bullet})(\mathcal{C}^{\bullet})\arrow[d]\arrow[r] & \underline{\mathcal{G}\mathrm{rass}}_{\mathcal{D}}(\mathcal{M}^{\bullet})\big(\mathcal{H}_{\mathcal{D}}^0(\mathcal{C}^{\bullet})\big)\arrow[d]
        \\
       \mathbb{R}^{pre}\underline{\mathcal{G}\mathrm{rass}}_{\mathcal{D}}(\mathcal{M}^{\bullet})(\mathcal{C}^{\bullet})\arrow[r] & \mathbb{R}^{pre}\underline{\mathcal{G}\mathrm{rass}}_{\mathcal{D}}(\mathcal{M}^{\bullet})\big(\mathcal{H}_{\mathcal{D}}^0(\mathcal{C}^{\bullet})\big) 
    \end{tikzcd}
\end{equation}
such that $\tau_0\mathbb{R}\underline{\mathcal{G}}_{\mathcal{D}}(\mathcal{M}^{\bullet})(\mathcal{C}^{\bullet})\simeq \underline{\mathcal{G}\mathrm{rass}}_{\D}.$
    \end{prop}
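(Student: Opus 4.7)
The approach is to interpret (\ref{eqn: Grassmannian Fiber product}) as the defining homotopy Cartesian square: set $\mathbb{R}\underline{\mathcal{G}}_\mathcal{D}(\mathcal{M}^\bullet)(\mathcal{C}^\bullet)$ to be the homotopy pullback of the morphism (\ref{eqn: CanonIncl}) along the map $\mathbb{R}^{pre}\underline{\mathcal{G}\mathrm{rass}}_\mathcal{D}(\mathcal{M}^\bullet)(\mathcal{C}^\bullet) \to \mathbb{R}^{pre}\underline{\mathcal{G}\mathrm{rass}}_\mathcal{D}(\mathcal{M}^\bullet)(\mathcal{H}_\mathcal{D}^0(\mathcal{C}^\bullet))$ induced by the canonical unit $\mathcal{C}^\bullet \to \mathcal{H}_\mathcal{D}^0(\mathcal{C}^\bullet).$ The first step is to verify that this assignment defines a simplicial $\D$-presheaf, i.e.\ that it is functorial in $\mathcal{C}^\bullet$: morphisms $\mathcal{C}_1^\bullet \to \mathcal{C}_2^\bullet$ of derived $\D$-algebras induce natural maps between each corner of the square via derived base change $-\otimes_{\mathcal{C}_1}^{\Ld}\mathcal{C}_2$, and these are compatible with the $\mathcal{H}_\mathcal{D}^0$-unit by naturality. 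Preservation of weak equivalences under base change follows from part (1) of the Quillen-functor argument used in the proof of the flatness proposition earlier in this section.

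Next, the Cartesian property is automatic since $\mathbb{R}\underline{\mathcal{G}}_\mathcal{D}$ is by construction the pullback. The substantive content is therefore the identification of the classical truncation. For this I would evaluate the square on a discrete classical commutative $\D$-algebra $\mathcal{C}$ (viewed as concentrated in cohomological degree zero). In this case the unit $\mathcal{C} \to \mathcal{H}_\mathcal{D}^0(\mathcal{C})$ is the identity, so $\mathcal{C}^\bullet$ and $\mathcal{H}_\mathcal{D}^0(\mathcal{C}^\bullet)$ coincide. Consequently the right vertical and bottom horizontal maps in (\ref{eqn: Grassmannian Fiber product}) become equivalences, forcing the top horizontal to be an equivalence by the universal property of the homotopy pullback. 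This yields
\[
\tau_0\mathbb{R}\underline{\mathcal{G}}_\mathcal{D}(\mathcal{M}^\bullet)(\mathcal{C}) \;\simeq\; \underline{\mathcal{G}\mathrm{rass}}_\mathcal{D}(\mathcal{M}^\bullet)(\mathcal{H}_\mathcal{D}^0(\mathcal{C})) \;\simeq\; \underline{\mathcal{G}\mathrm{rass}}_\mathcal{D}(\mathcal{M}^\bullet)(\mathcal{C}),
\]
giving the claimed equivalence of classical $\D$-prestacks.

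The main obstacle I anticipate is not the formal pullback argument, which is tautological once (\ref{eqn: CanonIncl}) is available, but rather verifying that (\ref{eqn: CanonIncl}) itself is well-defined with the correct compatibility properties. Concretely, one must check that a classical datum $\mathcal{N} \hookrightarrow \mathcal{H}_\mathcal{D}^*(\mathcal{M}) \otimes \mathcal{H}_\mathcal{D}^0(\mathcal{C})$ with $\D$-flat cohomologies lifts canonically to a sequence $\mathcal{N}^\bullet \hookrightarrow \mathcal{R}^\bullet \xrightarrow{qis} \mathcal{M}^\bullet \otimes^\Ld \mathcal{H}_\mathcal{D}^0(\mathcal{C})$ of cofibrant $\DG$-modules, and that this lift is compatible with derived base change along the unit $\mathcal{C}^\bullet \to \mathcal{H}_\mathcal{D}^0(\mathcal{C}^\bullet)$. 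Here one invokes Proposition \ref{prop: Tor} and Proposition \ref{prop: D-flat result}: the flatness of $\mathcal{H}_\mathcal{D}^i(\mathcal{N})$ annihilates the higher $\mathcal{T}or$ terms in the relevant spectral sequence, ensuring that the classical Grassmannian datum admits an essentially unique derived refinement whose formation commutes with base change. With this in hand the Cartesian square is rigorously defined and the truncation identification follows as above.
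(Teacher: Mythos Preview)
Your formal pullback argument is essentially correct and more direct than the paper's, but it contains one slip: when $\mathcal{C}$ is discrete, the bottom horizontal map in (\ref{eqn: Grassmannian Fiber product}) becomes the identity, but the right vertical map (\ref{eqn: CanonIncl}) does \emph{not} become an equivalence --- the pre-derived Grassmannian $\mathbb{R}^{pre}\underline{\mathcal{G}\mathrm{rass}}_\mathcal{D}(\mathcal{M}^\bullet)(\mathcal{C})$ still contains arbitrary cofibrant sequences, without the flatness and injectivity-on-cohomology constraints of Construction \ref{cons: D-Prestack of Grassmannian}. This does not damage your conclusion: the bottom horizontal being an equivalence already forces the top horizontal to be one (base change along an equivalence), and that alone yields $\tau_0\mathbb{R}\underline{\mathcal{G}}_\mathcal{D}(\mathcal{M}^\bullet)(\mathcal{C})\simeq \underline{\mathcal{G}\mathrm{rass}}_\mathcal{D}(\mathcal{M}^\bullet)(\mathcal{C})$.

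The paper takes a different, more explicit route for the truncation claim. Rather than invoking the formal degeneration of the square, it unpacks the $\mathcal{C}$-points of the pullback as quasi-isomorphism classes of perfect $\mathcal{D}_X$-complexes $\mathcal{N}^\bullet \hookrightarrow \mathcal{M}^\bullet \otimes \mathcal{C}$ with $\mathcal{C}$-flat cohomology and injective $\mathcal{H}_\mathcal{D}^*$-maps, and then argues that such a quasi-isomorphism class is uniquely determined by its cohomology --- using $\D$-affineness of $X$, the finite homological dimension $2\dim X$ of $\mathcal{D}_X$, and the local freeness in each degree forced by flatness. This identifies the truncation with the concrete functor $\mathbf{Gr}(\mathcal{H}_\mathcal{D}^*(\mathcal{M}^\bullet))$ introduced just before Construction \ref{cons: D-Prestack of Grassmannian}. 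Your approach is cleaner for establishing the stated equivalence; the paper's has the advantage of exhibiting the explicit uniqueness-of-lift mechanism, which is precisely what you flag as the ``main obstacle'' in your final paragraph. In effect the paper addresses the concern you raise, while you prove the proposition itself more economically.
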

  \begin{proof}
Note that the fiber product (\ref{eqn: Grassmannian Fiber product}) is equivalent to the functor
$$\mathbb{R}\underline{\mathcal{G}\mathrm{rass}}_{\mathcal{D}}(\mathcal{M}^{\bullet}):\mathbf{CAlg}(\mathcal{D}_X)\rightarrow \mathbf{Spc},$$
    and note its $\mathcal{C}^{\bullet}$-points are given by sequences of the form 
    $$\mathcal{N}^{\bullet}\hookrightarrow \mathcal{L}^{\bullet}\rightarrow \mathcal{M}^{\bullet}\otimes\mathcal{C}^{\bullet},$$
with the property that upon tensoring with the commutative $\D$-algebra $\mathcal{H}_{\mathcal{D}}^0(\mathcal{C}^{\bullet})$ (over $\mathcal{C}$), given a point of $\mathbb{R}^{pre}\underline{\mathcal{G}\mathrm{rass}}_{\mathcal{D}}(\mathcal{M}^{\bullet})\big(\mathcal{H}_{\mathcal{D}}^0(\mathcal{C}^{\bullet})\big).$
Consider the classical truncation. We will show the functor of points is represented by the same object which represents the classical prestack  
 $\mathbf{Gr}\big(\mathcal{H}_{\mathcal{D}}^*(\mathcal{M}^{\bullet})\big)$ introduced above. This follows since $X$ is $\D$-affine one has the finite homological dimension of $\D$ as $2\mathrm{dim}_X$ so that a cofibrant module with coefficients in $\mathcal{A}$ is equivalently described in terms of a perfect complex over $\D_X$ by pull-back along zero-section morphism to $X$ since $\mathbf{D}_{\mathrm{Coh}}^b(\mathcal{D}_X^{op})\simeq \mathbf{Perf}(\mathcal{D}_X)$ is an equivalence of $\infty$-categories. In other words,
    $$\mathbb{R}\underline{\mathcal{G}\mathrm{rass}}_{\mathcal{D}}(\mathcal{M}^{\bullet})^{cl}(\mathcal{C})\simeq\big\{\mathcal{N}\hookrightarrow \mathcal{M}\otimes\mathcal{C}|\mathcal{N}\in \mathbf{Perf}(\mathcal{D}_X),\mathcal{H}_{\mathcal{D}}^*(\mathcal{N})\text{ is } \mathcal{C}-\text{flat and }$$
    $$\mathcal{H}_{\mathcal{D}}^*(\mathcal{N})\hookrightarrow \mathcal{H}_{\mathcal{D}}^*(\mathcal{M})\otimes\mathcal{C}\text{is  injective}\big\}/\{quasi-isom\}.$$
    The equivalence follows by noticing that for a point $[\mathcal{N}]\in \mathbf{Gr}\big(\mathcal{H}_{\mathcal{D}}^*(\mathcal{M}^{\bullet})\big)$, corresponding to $[\mathcal{N}\hookrightarrow \mathcal{H}_{\mathcal{D}}^*(\mathcal{M})\otimes\mathcal{C}],$ there exists a unique quasi-isomorphism class $[\mathcal{L}]\in \mathbb{R}\underline{\mathcal{G}\mathrm{rass}}_{\mathcal{D}}(\mathcal{M}^{\bullet})^{cl}(\mathcal{C})$ whose cohomology agrees with $[\mathcal{N}].$ This follows since $\mathcal{L}$ is $\mathcal{D}_X$-perfect with flat cohomology. In particular, it is a complex of locally free $\mathcal{D}_X$-modules in each degree.
\end{proof}

\subsection{The stack of $\D$-morphisms}
\label{ssec: An alternative description}
In this subsection we give another point of view on the $\D$-prestack \ref{eqn: DerDQuot}. This gives more details to prove Proposition \ref{prop: DerDQuot is stack}. 

\begin{prop}
The functor $\mathbb{R}\underline{Q}_{\mathcal{D}_X,Z}^{P},$ is a sub-prestack of $\mathbf{maps}_{\mathcal{D}_X}^{rel}(\mathcal{O}_{Z},-)^{\leq 0}.$
\end{prop}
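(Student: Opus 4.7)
The plan is to unpack the definition of $\mathbf{maps}_{\mathcal{D}_X}^{rel}(\mathcal{O}_Z,-)^{\leq 0}$ and then exhibit $\mathbb{R}\underline{Q}_{\mathcal{D}_X,Z}^{P}$ as a full subfunctor cut out by property-style conditions. Concretely, $\mathbf{maps}_{\mathcal{D}_X}^{rel}(\mathcal{O}_Z,-)^{\leq 0}$ sends $\mathcal{B}^{\bullet}\in\mathbf{CAlg}(\mathcal{D}_X)$ to the maximal sub-$\infty$-groupoid of
$$\mathrm{Fun}\big(\Delta^1,\mathbf{Mod}_{\mathcal{D}_X}(\mathcal{O}_Z\otimes\mathcal{B})^{\leq 0}\big)^{\simeq}$$
spanned by morphisms $\mathcal{O}_Z\otimes\mathcal{B}\to \mathcal{N}$ with $\mathcal{N}$ connective, and is functorial in $\mathcal{B}^{\bullet}$ via derived base change.

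The natural transformation $\iota:\mathbb{R}\underline{Q}_{\mathcal{D}_X,Z}^{P}\hookrightarrow \mathbf{maps}_{\mathcal{D}_X}^{rel}(\mathcal{O}_Z,-)^{\leq 0}$ I would construct level-wise by sending $[\varphi:\mathcal{O}_Z\otimes\mathcal{B}\to \mathcal{N}]$ to its underlying $1$-morphism in $\mathrm{Fun}(\Delta^1,-)^{\simeq}$, forgetting the additional data: perfectness, $\mathcal{D}_X$-flatness over $\mathcal{B}$, $\mathcal{D}$-epimorphism on $\mathcal{H}_{\mathcal{D}}^0$, proper support of the relative characteristic variety as in (\ref{eqn: ProperSuppQuotCondition}), and the Hilbert polynomial condition (\ref{eqn: HilbProperty}). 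I would then verify that $\iota$ is pointwise fully faithful: any equivalence in $\mathrm{Fun}(\Delta^1, \mathbf{Mod}_{\mathcal{D}_X}(\mathcal{O}_Z\otimes\mathcal{B})^{\leq 0})^{\simeq}$ between two $\mathcal{B}^{\bullet}$-points of $\mathbb{R}\underline{Q}^P$ is automatically an equivalence in $\mathbb{R}\underline{Q}^P(\mathcal{B}^{\bullet})$, since the extra conditions depend only on the underlying quasi-isomorphism class of $\varphi$.

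Next I would verify stability of these conditions under derived base change along $\mathcal{B}^{\bullet}\to \mathcal{B}'^{\bullet}$; this is essentially built into Construction \ref{cons: RQuot and conditions}. Preservation of perfectness and $\mathcal{D}_X$-flatness under $-\otimes_{\mathcal{B}}^{\mathbb{L}}\mathcal{B}'$ follows from Proposition \ref{prop: Tor} and Proposition \ref{prop: D-flat result}; stability of the $\mathcal{D}$-epimorphism condition is precisely Proposition \ref{prop: D-epi is local}; and invariance of the Hilbert polynomial is checked on classical truncations of geometric fibers, where it reduces to the standard flat-family statement recalled in the proof of Proposition \ref{prop: Comparison truncation}. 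These together confirm that $\iota$ exhibits $\mathbb{R}\underline{Q}_{\mathcal{D}_X,Z}^{P}$ as a sub-prestack.

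The main obstacle will be the proper support condition (\ref{eqn: ProperSuppQuotCondition}): one must check that the relative characteristic morphism from (\ref{eqn: Char map}) remains proper after derived base change, which requires compatibility of the twisted microlocalization $_{\mathcal{A}}\mu$ with the derived tensor product applied to the cofiber sequence (\ref{eqn: Cofibseq}), together with a control of supports of the cohomology sheaves of the resulting complex in terms of the supports before base change. Once this is secured, the remaining assertions amount to repackaging descent and functoriality data already established in Subsection \ref{ssec: Finiteness} and Subsection \ref{ssec: Background on derived D-geometry}.
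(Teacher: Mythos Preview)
Your plan is reasonable in spirit, but it rests on a misreading of the target functor. In the paper, $\mathbf{maps}_{\mathcal{D}_X}^{rel}(\mathcal{O}_Z,-)$ is not defined prior to this proposition; it is \emph{constructed} inside the proof as the homotopy fiber product
\[
\mathbf{maps}_{\mathcal{D}_X}^{rel}(\mathcal{O}_{Z},-)\simeq \mathbf{maps}_{\mathcal{D}_X}(\mathcal{O}_{Z},-)\times_{\mathbf{Perf}(Z)}\mathbf{Perf}_{\mathcal{D}_X}^{rel}(Z),
\]
where the superscript ``rel'' already encodes that the target complex is perfect and properly microsupported relative to the second factor. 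So perfectness and the proper-support condition (\ref{eqn: ProperSuppQuotCondition}) are \emph{not} among the extra conditions cutting out $\mathbb{R}\underline{Q}$ inside $\mathbf{maps}^{rel}$; they are already built into the ambient functor. Consequently the ``main obstacle'' you identify --- base-change compatibility of the proper-support condition --- is not an obstacle for the sub-prestack claim at all, but rather a well-definedness issue for the ambient functor itself, which the paper handles by building $\mathbf{Perf}_{\mathcal{D}_X}^{rel}(Z)$ first.

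Beyond this, the paper's argument differs structurally from yours. Rather than verifying pointwise full faithfulness of an inclusion, the paper constructs the ambient stack of morphisms from the nerve of $\underline{\mathbf{QCoh}}_{\mathcal{D}}^{W,cof}$ and the moduli of perfect complexes, and then proves a stronger statement via Sub-lemma~\ref{sublem: Sublemma1}: on classical truncations, $^{cl}\mathbb{R}Q_{\mathcal{D}_X,Z}\subset {}^{cl}\mathbf{maps}_{\mathcal{D}_X}^{rel}(\mathcal{O}_Z,-)^{\leq 0}$ is a \emph{Zariski open} $\mathcal{D}_X$-subscheme, using Proposition~\ref{prop: Zariski substacks} and Proposition~\ref{prop: D-epi is local}. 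This openness is the real content --- it is what makes the sub-prestack geometrically tractable later --- and your full-faithfulness check, while correct for the bare sub-prestack assertion once the target is properly identified, does not recover it.
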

\begin{proof}
The proof is based on the description of another $\mathcal{D}_X$-prestack defined as the simplicial $\mathcal{D}_X$-presheaf 
$$N\circ \underline{\mathbf{QCoh}}_{\mathcal{D}}^{W,cof}:\mathbf{CAlg}(\mathcal{D}_X)\rightarrow \mathbf{Spc},$$
given by composing the nerve functor with $\underline{\Q}_{\mathcal{D}}^{W,cof},$ the $\mathcal{D}_X$-prestack sending $\mathcal{A}^{\bullet}$ to the sub-category of $\Q_{\mathcal{D}}(\mathcal{A})$ consisting of weak-equivalences between cofibrant objects.
There is a bijection 
$$\pi_0\big(N\circ \underline{\Q}_{\mathcal{D}}^{W,cof}(\mathcal{A}^{\bullet})\big)\simeq \mathrm{Isom}\big(Ho(\mathbf{Mod}(\mathcal{A}[\mathcal{D}])\big),$$
where we have isomorphism classes of objects in the $\infty$-category of $\mathcal{A}$-modules in perfect $\mathcal{D}$-modules on $X.$

We obtain a sub-simplicial set, consisting of all connected components corresponding to those compact (as $X$ is smooth, thus perfect) objects in $Ho(\mathcal{A}^{\bullet}-\DG),$ given by $Ho\big(\mathcal{A}^{\bullet}-\mathbf{Perf}(\mathcal{D}_X)\big),$ with $\mathbf{Perf}(\mathcal{D}_X)$ the perfect complexes of $\mathcal{D}$-modules on $X:$
\[
\begin{tikzcd}
    \mathbf{Perf}_{\mathcal{D}_X}(\mathcal{A}^{\bullet})\arrow[d]\arrow[r] & Isom\big(Ho(\mathcal{A}^{\bullet}-\mathbf{Perf}(\mathcal{D}_X)\big)\arrow[d]
    \\
    N\Q_{\mathcal{D}}^{W,cof}(\mathcal{A}^{\bullet})\arrow[r] & \pi_0\big( N\Q_{\mathcal{D}}^{W,cof}(\mathcal{A}^{\bullet})\big)
\end{tikzcd}
\]

More directly, we can directly understand the functor 
$$\underline{\mathbf{Perf}}_{\mathcal{D}_X}:\mathbf{CAlg}(\mathcal{D}_X)\rightarrow \infty\mathbf{Grpd},$$
defined in the usual way. 
The mapping stack $\mathbf{Maps}(\mathcal{Y},\underline{\mathbf{Perf}}_{\mathcal{D}_X})\simeq \mathbf{Perf}_{\mathcal{D}_X}(\mathcal{Y}):\mathbf{CAlg}(\mathcal{D}_X)\rightarrow \infty\mathbf{Grpd},$ sends $\mathcal{B}^{\bullet}\mapsto \mathbf{Perf}_{\mathcal{D}_X}\big(\mathcal{Y}\times\mathbb{R}\mathrm{Spec}_{\mathcal{D}_X}(\mathcal{B}^{\bullet})\big)^{\simeq}.$

Straightforwardly define sub-$\mathcal{D}_X$-prestacks  $\mathbf{Perf}_{\mathcal{D}_X}(\mathcal{A}^{\bullet})^{[a,b]}\subset \mathbf{Perf}_{\mathcal{D}_X}(\mathcal{A}^{\bullet}),$ of those complexes of amplitude $[a,b]$. We also have a well-defined moduli of perfect $\mathbf{Perf}_{\mathcal{D}_X}(\mathcal{A}^{\bullet}\otimes \mathcal{B}^{\bullet})$ consisting of those complexes properly (micro)supported over $Z\times \mathbb{R}\mathrm{Spec}_{\mathcal{D}_X}(\mathcal{B}^{\bullet})$ relative to $\mathbb{R}\mathrm{Spec}_{\mathcal{D}_X}(\mathcal{B}^{\bullet}).$ 
Denote it by 
$$\mathbf{Perf}_{\mathcal{D}_X}^{rel}(Z)\subset \mathbf{Perf}_{\mathcal{D}_X}(Z).$$

Consider the $\D$-presheaf $\mathbf{maps}_{\mathcal{D}_X}(\mathcal{O}_{Z},-):\mathbf{CAlg}(\mathcal{D}_X)\rightarrow \infty\mathbf{Grpd},$ assigning to $\mathcal{B}^{\bullet}$ the maximal sub $\infty$-groupoid of $\mathcal{O}_{Z}\otimes\mathcal{B}^{\bullet}-\mathbf{Perf}(\mathcal{D}_X).$
Then, the stack of morphisms with source $\mathcal{O}_{Z},$ whose targets are perfect complexes properly microsupported relative to the second factor is obtained by considering the the homotopy fiber product
\begin{equation}
    \label{eqn: Perfect Morphisms}
    \mathbf{maps}_{\mathcal{D}_X}^{rel}(\mathcal{O}_{Z},-)\simeq \mathbf{maps}_{\mathcal{D}_X}(\mathcal{O}_{Z},-)\times_{\mathbf{Perf}(Z)}\mathbf{Perf}_{\mathcal{D}_X}^{rel}(Z),
\end{equation}
as well as 
\begin{equation}
    \label{eqn: Perfect [a,b]- Morphisms}
    \mathbf{maps}_{\mathcal{D}_X}^{rel}(\mathcal{O}_{Z},-)^{[a,b]}\simeq \mathbf{maps}_{\mathcal{D}_X}(\mathcal{O}_{Z},-)\times_{\mathbf{Perf}(Z)}\mathbf{Perf}_{\mathcal{D}_X}^{rel}(Z)^{[a,b]},
\end{equation}
where $\mathcal{E}^{\bullet}\in \mathbf{Perf}_{\mathcal{D}_X}^{rel}(Z)^{[a,b]}$ is a perfect complex such that for every $\mathcal{N}\in \mathcal{H}_{\mathcal{D}}^0(\mathcal{O}_{Z})-\mathrm{mods}_{\mathcal{D}_X},$ one has that $\mathcal{H}_{\mathcal{D}}^i\big(\mathcal{E}^{\bullet}\otimes_{\mathcal{O}_{Z}}^{\mathbb{L}}\mathcal{N}\big)\simeq 0,$ for $i\neq [a,b].$
Thus from (\ref{eqn: Perfect [a,b]- Morphisms}) have for all $a\leq b$ a morphism
\begin{equation}
    \label{eqn: gamma}
\gamma_{a,b}:\mathbf{maps}_{\mathcal{D}_X}^{rel}(\mathcal{O}_{Z},-)^{[a,b]}\rightarrow \mathbf{Perf}_{\mathcal{D}_X}^{rel}(Z)^{[a,b]}.
\end{equation}

\begin{sublem}
\label{sublem: Sublemma1}
$^{cl}\mathbb{R}Q_{\mathcal{D}_X,Z}\subset \hspace{1mm}^{cl}\mathbf{maps}_{\mathcal{D}_X}^{rel}(\mathcal{O}_{Z},-)^{\leq 0},$ is a Zariski open $\mathcal{D}_X$-scheme (viewed as derived $\mathcal{D}$-stacks).
\end{sublem}
\begin{proof}
Considering the Kan extension of the restriction of the functor (\ref{eqn: DerDQuot}) to the full-subcategory $Sch_X(\mathcal{D}_X)\subset \mathbf{dSch}_{X}(\mathcal{D}_X),$ it suffices to consider classically parameterized points i.e. $\varphi:\mathcal{O}_{Z}\otimes^{\mathbb{L}}\mathcal{B}\rightarrow \mathcal{E},$ where $\mathcal{B}$ is classical as a $\mathcal{D}_X$-algebra. Then, one can show that 
$$^{cl}\mathbb{R}Q_{\mathcal{D}_X,Z}\times_{^{cl}\mathbf{maps}_{\mathcal{D}_X}^{rel}(\mathcal{O}_{Z},-)}\mathrm{Spec}_{\mathcal{D}}\big(\mathcal{H}_{\mathcal{D}}^0(\mathcal{B})\big)\rightarrow \mathrm{Spec}_{\mathcal{D}_X}\big(\mathcal{H}_{\mathcal{D}}^0(\mathcal{B})\big),$$
is a Zariski open immersion of $\mathcal{D}_X$-schemes. By Proposition \ref{prop: Zariski substacks} and Proposition \ref{prop: D-epi is local}, this corresponds to the locus over which our complexes are $\mathcal{D}_X$-flat and whose parameterizing quotient morphism is a surjective morphism. Indeed, any $\mathrm{Spec}_{\mathcal{D}_X}(\mathcal{B})$-point determines some quotient map $\varphi$ to $\mathcal{C},$ concentrated in degree zero. The conditions on Tor-amplitudes here then imply that the space of $\mathcal{O}_{Z}\otimes\mathcal{B}[\mathcal{D}_X]$-linear morphisms from $\mathcal{C}$ to $\mathrm{ker}(\varphi)$ has Tor amplitude $0$. Therefore it is flat. 
\end{proof}
This concludes the proof.
\end{proof}

Consider the morphisms (\ref{eqn: gamma}).
\begin{prop}
    There exists a sequence of natural transformations $\gamma_{a,b}\rightarrow \gamma_{a,b-1},$ whose corresponding filtered colimit of open substacks is a homotopically finitely presented derived $\mathcal{D}_X$-prestack, which is locally geometric. 
\end{prop}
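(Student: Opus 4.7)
The plan is to produce the transitions $\gamma_{a,b}\to \gamma_{a,b-1}$ via good truncation of the target perfect complex at cohomological degree $b$, and then identify the resulting filtered colimit of open substacks with a subfunctor of the derived $\mathcal{D}$-Quot functor, thereby transferring both finite presentation and local geometricity from its bounded-amplitude approximations. For each $a\leq b$ the fully-faithful embedding $\mathbf{Perf}_{\mathcal{D}_X}^{rel}(Z)^{[a,b-1]}\hookrightarrow\mathbf{Perf}_{\mathcal{D}_X}^{rel}(Z)^{[a,b]}$ coming from the inclusion of amplitude ranges pulls back under the presentation (\ref{eqn: Perfect [a,b]- Morphisms}) to yield the required natural transformation; the complementary locus, where the top cohomology sheaf in degree $b$ fails to vanish, is closed under derived base change, hence defines a $\mathcal{D}$-Zariski open immersion of derived $\mathcal{D}_X$-prestacks by Proposition \ref{prop: Zariski substacks} combined with the cohomology-under-base-change spectral sequence of Proposition \ref{prop: Tor}.

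Next I would argue that each $\gamma_{a,b}$ is itself locally geometric and homotopically finitely presented. Local geometricity is extracted directly from the pullback presentation (\ref{eqn: Perfect [a,b]- Morphisms}) together with Sub-lemma \ref{sublem: Sublemma1}, which carves out the Quot locus as a Zariski open inside the relative mapping stack. Finite presentation follows by combining the compact generation of $\mathbf{Perf}_{\mathcal{D}_X}(\mathcal{A})$ by retracts of free $\mathcal{A}[\mathcal{D}_X]$-modules (the observation after (\ref{eqn: n-coconn})) with Proposition \ref{prop: Flat preserves n-truncations} and Proposition \ref{prop: D-epi is local}; together these imply that the $\mathcal{D}$-flat and $\mathcal{D}$-epimorphism conditions are preserved on each coconnective slice $\mathbf{CAlg}_X(\mathcal{D}_X)_{\mathcal{A}/}^{\geq -n}$ in the sense of Definition \ref{defn: D-AFP}.

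Finally, passing to the filtered colimit $\mathrm{colim}_{a,b}\gamma_{a,b}$ preserves local geometricity because we have arranged the transitions to be open immersions, and preserves homotopical finite presentation because for each fixed coconnective level $n$ the truncation $\tau^{\leq -n}$ is exhausted by some $\gamma_{a,b}$ with $b-a$ bounded linearly in $n$; $\mathcal{D}$-étale descent of the colimit then follows from Proposition \ref{prop: D-epi is local} combined with Proposition \ref{prop: DerDQuot is stack}. The hardest step will be verifying derived openness of the transitions uniformly through the Postnikov tower: semicontinuity yields openness on the classical truncation, but promoting it through every $\tau^{\leq -n}$ demands the flat-base-change statement of Proposition \ref{prop: D-flat result} together with compatibility with the micro-localised cotangent complex $\mu\mathbb{L}$, via the Postnikov and Koszul-dual machinery recalled earlier in the section.
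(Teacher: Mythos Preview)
Your plan misses the mechanism the paper actually uses, and the gap matters for local geometricity. You try to deduce geometricity of $\mathbf{maps}_{\mathcal{D}_X}^{rel}(\mathcal{O}_{Z},-)^{[a,b]}$ from Sub-lemma~\ref{sublem: Sublemma1} together with abstract compact generation of $\mathbf{Perf}_{\mathcal{D}_X}(\mathcal{A})$. But Sub-lemma~\ref{sublem: Sublemma1} only carves out a Zariski-open locus \emph{inside} the mapping stack on classical truncations; it says nothing about the mapping stack itself being geometric. And compact generation of the target module category does not, by itself, control the source object $\mathcal{O}_{Z}$ in the mapping functor.

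The paper's argument is different in kind: it fixes a $\mathcal{B}$-point $\varphi_{\mathcal{E}}$ of $\mathbf{Perf}_{\mathcal{D}_X}^{rel}(Z)^{[a,b]}$ and analyses the fibre
\[
\mathbb{R}\underline{\mathbf{maps}}_{\mathcal{D}_X}^{rel}(\mathcal{O}_{Z},\mathcal{E}^{\bullet})
\;\simeq\;
\mathbf{maps}_{\mathcal{D}_X}(\mathcal{O}_{Z},-)^{[a,b]}\times_{\mathbf{Perf}_{\mathcal{D}_X}^{rel}(Z)^{[a,b]}}\mathbb{R}\mathrm{Spec}_{\mathcal{D}_X}(\mathcal{B}).
\]
The crucial input is that $Z$ is homotopically finitely $\mathcal{D}$-presented, so by \cite[Prop.~4.3.1]{KSY} the algebra $\mathcal{O}_{Z}$ is a retract of a \emph{finite cellular} $\mathcal{D}_X$-algebra, i.e.\ there is a tower $\mathcal{R}_1\to\cdots\to\mathcal{R}_n=\mathcal{O}_{Z}'$. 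Mapping out of each stage gives a finite tower of mapping stacks
\[
Maps_{/Z_{\mathcal{B}}}\!\big(\mathcal{R}_{i+1}^{\bullet}\otimes^{\mathbb{L}}\mathcal{S}^{\bullet},(id_{Z}\times\overline{f})^*\mathcal{E}^{\bullet}\big)\longrightarrow Maps_{/Z_{\mathcal{B}}}\!\big(\mathcal{R}_{i}^{\bullet}\otimes^{\mathbb{L}}\mathcal{S}^{\bullet},(id_{Z}\times\overline{f})^*\mathcal{E}^{\bullet}\big),
\]
each transition being controlled by a single cell, hence affine and of finite presentation. This cell-by-cell computation is what gives relative representability of $\gamma_{a,b}$ over $\mathbf{Perf}_{\mathcal{D}_X}^{rel}(Z)^{[a,b]}$, and thereby local geometricity. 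Your proposal has no analogue of this step: nothing in your outline decomposes $\mathcal{O}_{Z}$, so the representability of the fibre is simply asserted rather than proved. The open-immersion and colimit parts of your plan are reasonable once relative geometricity of $\gamma_{a,b}$ is in hand, but without the cellular argument the core claim is unsupported.
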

\begin{proof}
Let $\varphi_{\mathcal{E}}:\mathbb{R}\mathrm{Spec}_{\mathcal{D}_X}(\mathcal{B})\rightarrow \mathbf{Perf}_{\mathcal{D}_X}^{rel}(Z)^{[a,b]},$ be a point classifying a perfect complex $\mathcal{E}^{\bullet}$ on $Z\times^h\mathbb{R}\mathrm{Spec}_{\mathcal{D}_X}(\mathcal{B}),$ we consider the pull-back diagram in derived $\mathcal{D}_X$-stacks:
\[
\begin{tikzcd}
\mathbb{R}\underline{\mathbf{maps}}_{\mathcal{D}_X}^{rel}(\mathcal{O}_{Z},\mathcal{E}^{\bullet})\arrow[d]\arrow[r] & \mathbb{R}\underline{Spec}_{\mathcal{D}_X}(\mathcal{B})\arrow[d]
  \\
\mathbf{maps}_{\mathcal{D}_X}(\mathcal{O}_{Z},-)^{[a,b]}\arrow[r,"\gamma_{a,b}"] & \mathbf{Perf}_{\mathcal{D}_X}^{rel}(Z)^{[ab]}
\end{tikzcd}
\]
where
$\mathbb{R}\underline{\mathbf{maps}}_{\mathcal{D}_X}^{rel}(\mathcal{O}_{Z},\mathcal{E}^{\bullet}):\mathbf{CAlg}(\mathcal{D}_X)_{\mathcal{B}/}\rightarrow \infty\mathbf{Grpd},$
assigns to $f:\mathcal{B}^{\bullet}\rightarrow \mathcal{S}^{\bullet}$ (with opposite morphism $\overline{f}$) the mapping space $$\mathrm{Maps}_{/Z\times^h\mathbb{R}\mathrm{Spec}_{\mathcal{D}_X}(\mathcal{B})}\big(\mathcal{O}_{Z}\otimes^{\mathbb{L}}\mathcal{S}^{\bullet},(id_{Z}\times\overline{f})^*\mathcal{E}^{\bullet}).$$
Assuming $Z$ is homotopically finitely $\mathcal{D}$-presented as a derived $\mathcal{D}_X$-stack, we have that 
$\mathcal{O}_{Z}$ equivalent to a retract of a finite cellular $\mathcal{D}_X$-algebra (see \cite[Proposition. 4.3.1]{KSY}).
Therefore, we have a sequence
$\mathcal{R}_1\rightarrow \cdots\rightarrow \mathcal{R}_{i}\rightarrow\mathcal{R}_{i+1}\rightarrow\ldots \rightarrow \mathcal{R}_n=\mathcal{O}_{Z}',$ inducing a sequence of mapping stacks, with $i$-th term for example, given by 
$$Maps_{/Z_{\mathcal{B}}}(\mathcal{R}_{i+1}^{\bullet}\otimes^{\mathbb{L}}\mathcal{S}^{\bullet},(id_{Z}\times \overline{f})^*\mathcal{E}^{\bullet})\rightarrow Maps_{/Z_{\mathcal{B}}}(\mathcal{R}_{i}^{\bullet}\otimes^{\mathbb{L}}\mathcal{S}^{\bullet},(id_{Z}\times \overline{f})^*\mathcal{E}^{\bullet}).$$
\end{proof}

\section{$\mathcal{D}$-Quot schemes as DG-$\D$-schemes}
\label{sec: D-Quot schemes as DG-D-schemes}
In this section we prove the representability of the derived $\D$-Quot functor (c.f. Notation \ref{notate: Special notation}) by an object of the category of differential-graded $\D$-schemes, as opposed to derived $\D$-schemes. These notions are not the same, but are related (e.g. \cite[Thm. 8]{BKS} and \cite{BKSY2}). Throughout this section we make use of this relationship. 

\subsection{Differential graded $\D$-Schemes}
Recall that $X$ is a smooth $D$-affine variety. 
\begin{defn}
\label{defn: DG-D-Scheme}
\normalfont 
A \emph{differential-graded $\D$-scheme} is a pair $\mathbf{Z}=(Z_0,\mathcal{O}_{\mathbf{Z}}^{\bullet}),$ where $(Z_0,\mathcal{O}_{\mathbf{Z}}^0)$ is an ordinary $\D$-scheme, called the \emph{ambient classical $\D$-scheme}, and $\mathcal{O}_{\mathbf{Z}}^{\bullet}$ is a sheaf of differential-graded commutative $\D$-algebras. It is of $\D$-\emph{finite-type} if $(Z_0,\mathcal{O}_{\mathbf{Z}}^0)$ is of $\D$-finite type as a classical $\D$-scheme and each $\mathcal{O}_{\mathbf{Z}}^i$ is an $\mathcal{O}_{\mathbf{Z}}^0[\mathcal{D}_X]$-module of finite presentation.
    \end{defn}
A morphism $\mathbf{F}:=(F,f^{\bullet}):\mathbf{Z}_1\rightarrow\mathbf{Z}_2,$ of dg-$\D$-schemes is naturally defined as a morphism of dg-ringed spaces restricting to a morphism $(F,f^0)$ between the ambient classical $\D$-schemes. The corresponding category is denoted by $\mathrm{dgSch}_{X}(\mathcal{D}_X).$

    Following \cite{CFK}, put $\pi_0(\mathbf{Z}):=\mathrm{Spec}_{\mathcal{D}}(\mathcal{H}_{\mathcal{D}}^0(\mathcal{O}_{\mathbf{Z}}^{\bullet})\big),$ and call it the $0$-th truncation, reserving the term `classical truncation' for derived $\D$-schemes $Z$, denoted by $Z^{cl}.$
    
Underlying Definition \ref{defn: DG-D-Scheme} is a diagram
\begin{equation}
    \label{eqn: DG-D-Scheme Diagram}
\begin{tikzcd}
    & & \mathrm{Coker}(d^{-1})\arrow[hookrightarrow]{d} & 
    \\
    \cdots \rightarrow \mathcal{O}_{\mathbf{Z}}^{-2}\arrow[r,"d^{-2}"] & \mathcal{O}_{\mathbf{Z}}^{-1}\arrow[twoheadrightarrow]{ur} \arrow[r,"d^{-1}"] & \mathcal{O}_{\mathbf{Z}}^0 \rightarrow \cdots & 
\end{tikzcd}
\end{equation}
We only consider dg-schemes for which $\mathcal{O}_{\mathbf{Z}}^{i}=0,i>0.$ From (\ref{eqn: DG-D-Scheme Diagram}) it is clear that $\iota:\pi_0(\mathbf{Z})\subset Z_0=\mathrm{Spec}_{\mathcal{D}}(\mathcal{O}_{\mathbf{Z}}^0)$ defines a closed $\D$-subscheme.
    
To each dg-$\mathcal{D}$-scheme $\mathbf{Z}$ we may extract its underlying \emph{graded} $\D$-scheme:
$$\mathbf{Z}_{gr}:=(\pi_0(\mathbf{Z}),\iota^*\mathcal{H}_{\mathcal{D}}^*(\mathcal{O}_{\mathbf{Z}})\big),$$
and denoting by $\mathrm{Sch}_X^{gr}(\mathcal{D}_X),$ the subcategory of $\mathrm{dgSch}_{X}(\mathcal{D}_X)$ consisting of graded $\D$-schemes, we have a natural functor
$$(-)_{gr}:\mathrm{dgSch}_{X}(\mathcal{D}_X)\rightarrow \mathrm{Sch}_X^{gr}(\mathcal{D}_X),\hspace{2mm}\mathbf{Z}\mapsto \mathbf{Z}_{gr}.$$

\begin{defn}
\label{defn: dg-D-Manifold}
\normalfont 
    A dg-$\D$-scheme $\mathbf{Z}$ of $\D$-finite-type is said to be a \emph{dg-$\D$-manifold} if each $\mathcal{O}_{\mathbf{Z}}^i$ is a vector $\D$-bundle over the ambient $\D$-scheme. 
\end{defn}
There is a notion of $\D$-étale covering $(F,f^{\bullet})$ of dg-$\D$-manifolds, similar to that of Subsect. \ref{sssec: D-étale}. It is specified by asking that $\pi_0(\mathbf{Z}_1)\rightarrow \pi_0(\mathbf{Z}_2)$ is a usual $\D$-étale morphism and a $\D$-epimorphisms (\ref{eqn: Classical D-étale}), such that for each $k<0,$ there are isomorphism,
$$\mathcal{H}_{\mathcal{D}}^k(F^{-1}\mathcal{O}_{\mathbf{Z}_2}^{\bullet})\otimes_{F^{-1}\mathcal{H}_{\mathcal{D}}^0(\mathcal{O}_{Z_2}^{\bullet})}\mathcal{H}_{\mathcal{D}}^0(\mathcal{O}_{\mathbf{Z}_1}^{\bullet})\simeq \mathcal{H}_{\mathcal{D}}^k(\mathcal{O}_{\mathbf{Z}_1}^{\bullet}).$$
Equivalently, the base-change map from an affine derived $\D$-scheme is derived $\D$-étale.

\subsection{Differential graded `operations'}
In this subsection we discuss the following derived moduli problem: selecting $m_0\in \mathbb{N}$ large enough to ensure involutivity of $\EQ$, we want to classify $A_{\infty}$-submodules obtained via a certain category of $\D_X$-modules, of the form 
$$\mathcal{N}_{\geq m_0}\hookrightarrow \mathcal{C}h_{\geq m_0},\hspace{2mm} with \hspace{2mm} \mathrm{dim}(\mathcal{N}_{k})=\mathbf{h}^{\mathcal{D}}(k),\forall k\geq m_0.$$ 

Requiring existence of an $\mathcal{R}$-submodule $\mathcal{N}_{a,t}\hookrightarrow \mathcal{C}h$ is expressed via algebra relations on a product,
$$\mathrm{Gr}_{[a,t]}:=\prod_{a\leq s\leq t}\mathrm{Gr}\big(\mathrm{dim}(\mathcal{N}_s),\mathcal{C}_s\big).$$
Put $\alpha\leq\beta$ and $s\geq \alpha$ with $\mathrm{G}_s:=\mathrm{Gl}(P(s))$ with coefficients in the constant differential ring $\mathbb{C}$. Since a symbol module is finitely-cogenerated if the equation is involutive and formally integrable, we can choose a basis in $\mathcal{N}_s\hookrightarrow\mathcal{C}h,$ and this generates left actions of $\mathrm{G}_s$ on $\mathcal{N}_s.$

Here, $M_*:=\mathcal{C}h(*)$ is the graded $R_*:=\mathcal{O}_{T^*X}(*)$-module corresponding to an involutive $\D$-geometric PDE. Our notations parallel closely that used in \cite{BKSY2} to classify these algebra structures. The supplementary material on operadic structures (up-to-homotopy) for our use are given now.

The homotopy coherent mapping complexes in $\mathcal{A}[\mathcal{D}]$-modules are described in terms of a structure that Beilinson-Drinfeld refer to as `pseudo-tensor' \cite{BD}. We do not require the full machinery, but recall some features as they are needed. 
First we fix notation.

\begin{notate}
\normalfont
\label{notate: DG-operations}
    Let $X$ be a $\D$-affine $k$-scheme and consider for each non-negative integer $n$ the finite ordered set $[n]=\{1,\ldots,n\}$ and the complex of local $n$-array operations 
$$\mathbb{R}\mathcal{P}_{[n]}^*(\{\mathcal{M}_i\}_{i\in [n]};\mathcal{N}),$$
 between a collection of graded $\mathcal{A}[\mathcal{D}]$-modules $\mathcal{M}_i,\mathcal{N},i=1,\ldots,n.$ 
We will consider $\mathcal{R}:=\mathcal{O}_{T^*(X,\EQ)}$ with irrelevant ideal $\mathcal{R}_+$ and look at the corresponding multi-operations on (graded) microlocalized modules, for example the dg-version of the characteristic sheaf i.e. $\mathcal{M}:=\mu\mathbb{T}_{\mathcal{A}}^{\ell}$ in homogeneous degree $0$, which we denote by the same; for $n\geq 0,$
$$\mathcal{P}_n^*(\{\mathcal{M}\},\mathcal{N})^0, \hspace{3mm} \big(\text{resp. } \hspace{2mm} \mathcal{P}_{n\sqcup [1]}(\{\mathcal{M}_i,\mathcal{L}\}_{i\in [n]},\mathcal{N})^0\big),$$
indicate the spaces of $n$-array $\D$-linear morphisms (resp. $n+1$-array $\D$-linear morphisms) of homogeneous degree $0$ acting from 
$$\mathcal{M}_1\boxtimes \cdots\boxtimes\mathcal{M}_n\rightarrow \Delta_*\mathcal{N},\hspace{2mm} (\text{resp. } (\mathcal{M}_1\boxtimes \cdots\boxtimes \mathcal{M}_n)\boxtimes \mathcal{L}\rightarrow \Delta_*\mathcal{N})).$$
\end{notate}

One way to justify their use is due to the fact the `correct' duality in dg-$\mathcal{A}[\mathcal{D}]$-modules is not the $\mathcal{A}$-duality but rather the $\mathcal{A}[\mathcal{D}]$-dual (c.f. \ref{ssec: Background on derived D-geometry}). This is principally because $\mathcal{A}$-linear duality does not  have good finiteness properties. Indeed, the tangent $\D$-complex 
$\mathbb{T}_{\mathcal{A}}$ injects into $ \mathbb{R}\mathcal{H}om_{\mathcal{D}}(\mathcal{A},\mathcal{A}[\mathcal{D}])$ and $\D$-module theory tells us this is equivalently a map of $\D_{X\times X}^{op}$-linear dg-modules 
$\mathbb{T}_{\mathcal{A}}\boxtimes \mathcal{A}^r\rightarrow \Delta_*\mathcal{A}^r.$ Thus to study pure sheaves of $\mathcal{A}[\mathcal{D}]$-modules defined via the property of their reflexive local duality, one needs to work with these new kinds of operations. 
\\

\noindent\textbf{Digression: DG-operations.}
The usual operations we are interested in are recalled as follows (similar for $\mathcal{A}[\mathcal{D}_X]$-modules and $\mathcal{A}[\mathcal{E}_X]$-modules).
\begin{prop}
For $\mathcal{P}_{[1]}^*\big(\{\mathcal{M}\},\mathcal{N}\big):=\mathcal{H}\mathrm{om}_{\mathcal{D}_X}\big(\mathcal{M},\mathcal{N}\otimes^!\mathcal{D}_X\big)$ there is a natural evaluation map $\mathrm{eval}_*\in \mathcal{P}_{[2]}^*\big(\big\{\mathcal{M},\mathcal{P}_{[1]}^*(\{\mathcal{M}\},\mathcal{N})\big\},\mathcal{N}\big)$. More generally, there is an induced map $\mathcal{P}_{[I]}^*\big(\{\mathcal{L}_i\},\mathcal{P}_{[1]}^*(\{\mathcal{M}\},\mathcal{N})\big)\rightarrow \mathcal{P}_{[I]\sqcup \{1\}}^*\big(\{\mathcal{L}_i,\mathcal{M}\},\mathcal{N}\big)$ that is a bijection.
\end{prop}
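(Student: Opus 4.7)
The plan is classical-in-spirit: realize $\mathrm{eval}_*$ as the composition of the tautological $\mathcal{D}_X$-module evaluation with the structure maps of the induced $\mathcal{D}_X$-module $\mathcal{N}\otimes^!\mathcal{D}_X$, and then derive the bijection as a form of hom-tensor adjunction internal to the pseudo-tensor category of $\mathcal{D}_X$-modules. All constructions are natural, so by $\mathcal{D}$-affinity of $X$ it suffices to exhibit the relevant morphisms locally and then glue.

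First I would construct $\mathrm{eval}_*$. The internal Hom $\mathcal{P}_{[1]}^*(\{\mathcal{M}\},\mathcal{N}) = \mathcal{H}\mathrm{om}_{\mathcal{D}_X}(\mathcal{M},\mathcal{N}\otimes^!\mathcal{D}_X)$ carries the tautological evaluation pairing $\mathcal{M}\otimes_{\mathcal{O}_X}\mathcal{P}_{[1]}^*(\{\mathcal{M}\},\mathcal{N})\to\mathcal{N}\otimes^!\mathcal{D}_X$. Composing with the canonical action map $\mathcal{N}\otimes^!\mathcal{D}_X\to\mathcal{N}$ (i.e.\ the counit of the induction/restriction adjunction), and reinterpreting the source on $X\times X$ via the external product followed by restriction to the diagonal, yields a $\mathcal{D}_{X\times X}$-linear map
\[
\mathrm{eval}_*\colon \mathcal{M}\boxtimes \mathcal{P}_{[1]}^*(\{\mathcal{M}\},\mathcal{N})\longrightarrow \Delta_*\mathcal{N},
\]
which is the required element of $\mathcal{P}_{[2]}^*$.

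The forward map $\Phi$ of the asserted bijection is then defined by post-composition with $\mathrm{eval}_*$: an operation $\phi\colon\boxtimes_{i\in I}\mathcal{L}_i\to\Delta_*\mathcal{P}_{[1]}^*(\{\mathcal{M}\},\mathcal{N})$ is sent to $\mathrm{eval}_*\circ(\phi\boxtimes\mathrm{id}_{\mathcal{M}})$ after pushing forward along the appropriate iterated diagonal. To construct a candidate inverse $\Psi$, I would curry in the $\mathcal{M}$-slot: any operation $\psi\colon\boxtimes_i\mathcal{L}_i\boxtimes\mathcal{M}\to\Delta_*\mathcal{N}$ may, by the hom-tensor adjunction characterizing the induction $(-)\otimes^!\mathcal{D}_X$, be regarded as a family of $\mathcal{D}_X$-linear maps $\mathcal{M}\to\mathcal{N}\otimes^!\mathcal{D}_X$ parametrized by the $\mathcal{L}_i$-factors, i.e.\ a section of $\mathcal{P}_{[1]}^*(\{\mathcal{M}\},\mathcal{N})$; these assemble into an $I$-array operation $\boxtimes_i\mathcal{L}_i\to\Delta_*\mathcal{P}_{[1]}^*(\{\mathcal{M}\},\mathcal{N})$.

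The identities $\Phi\circ\Psi=\mathrm{id}$ and $\Psi\circ\Phi=\mathrm{id}$ then reduce to the unit-counit triangles of the induction/restriction adjunction applied in the $\mathcal{M}$-variable, combined with the naturality of $\boxtimes$ under $\Delta_*$. The one non-formal point, which I expect to be the main obstacle, is the compatibility between the diagonal pushforward $\Delta_*$ and the induction functor $(-)\otimes^!\mathcal{D}_X$ when composing $I$-ary and $1$-ary operations: concretely, one must verify that $\mathrm{eval}_*$ transforms correctly under base change along the various diagonals $\Delta_I\times\mathrm{id}$ and $\mathrm{id}\times\Delta_{\{1\}}$. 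This is straightforward in local coordinates, and glues by naturality together with $\mathcal{D}$-affinity, which reduces global sheaf statements to statements about sections.
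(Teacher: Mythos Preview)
The paper does not supply its own proof of this proposition: it is stated as a recollection of the standard pseudo-tensor structure on $\mathcal{D}_X$-modules from \cite{BD}, introduced in the ``Digression: DG-operations'' paragraph with the phrase ``The usual operations we are interested in are recalled as follows.'' There is therefore nothing to compare against at the level of argument.

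Your outline is the standard Beilinson--Drinfeld construction and is correct in shape. One small point of caution: the map you call ``the canonical action map $\mathcal{N}\otimes^!\mathcal{D}_X\to\mathcal{N}$'' is not quite the right way to phrase the passage to $\Delta_*\mathcal{N}$; in the $*$-pseudo-tensor formalism the target of $\mathrm{eval}_*$ is $\Delta_*\mathcal{N}$ on $X\times X$, and the relevant identification is rather that $\Delta_*\mathcal{N}\simeq \mathcal{N}\boxtimes\mathcal{D}_X$ (or its left-module variant) along the diagonal, so that the evaluation $\mathcal{M}\otimes\mathcal{H}\mathrm{om}_{\mathcal{D}_X}(\mathcal{M},\mathcal{N}\otimes^!\mathcal{D}_X)\to\mathcal{N}\otimes^!\mathcal{D}_X$ already \emph{is} a map to the correct target once reinterpreted on $X\times X$. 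With that adjustment, your currying argument for the bijection is exactly the inner-Hom adjunction in the $*$-pseudo-tensor category, and the compatibility with iterated diagonals that you flag is handled by the Kashiwara-type base change for $\Delta_*$ (or, as in \cite{BD}, by the explicit formula for composition of $*$-operations).
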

In particular, there are internal homomorphism objects for $*$-operations and their $*$-compositions,
$$\mathcal{P}_{[1]}^*\big(\{\mathcal{M}\},\mathcal{N}\big)\boxtimes \mathcal{P}_{[1]}^*\big(\{\mathcal{N}\},\mathcal{L}\big)\rightarrow \Delta_*\mathcal{P}_{[1]}^*\big(\{\mathcal{M}\},\mathcal{L}\big).$$
The differential graded versions are defined for each finite set $I,$ and complexes $\mathcal{M}_i^{\bullet}$ and $\mathcal{N}^{\bullet}$ of $\mathcal{A}[\mathcal{D}]$-modules, by understanding
$$P_I(\{\mathcal{M}_i^{\bullet}\}_{i\in I},\mathcal{N}^{\bullet}),$$
to be the total complex associated with $P_I(\{\mathcal{M}_i^{\alpha_i}\}_{i\in I},\mathcal{N}^{\beta}),$ for all $\alpha_i,\beta\in \mathbb{Z}$.

Thus, the complex 
$\big(P_I\big(\{\mathcal{M}_i\}_{i\in I},\mathcal{N}\big),\partial(I),\big),$ is defined and for each surjective morphism of finite sets $\alpha\colon J\rightarrow I$ we have a composition map, which on homogeneous components looks like
\begin{eqnarray*}
    \gamma:P_I\big(\{C_i\},M\big)^m\otimes\bigotimes_{i\in I}P_{J_i}\big(\{D_j\}_{j\in J_i},C_i\big)^{n_i}&\rightarrow &P_J\big(\{D_j\}_{j\in J},M\big)^{m+\sum n_i},
    \\
   \big(f,(g_i)\big)&\longmapsto& \big(f(g_i)\big).
   \end{eqnarray*}
As a morphism of complexes, this composition interacts with the differential accordingly, via   
$$\partial\big(f(g_i)\big)=(\partial f)(g_i)+\sum_{k=1}^{|I|}(-1)^{m+\sum_{i=1}^{k-1}n_i}f\big(g_1,..,g_{k-1},\partial g_k,g_{k+1},\cdots,g_{|I|}\big).$$
 Explicitly it is given by
    $$\boxtimes_{j\in J} D_j=\boxtimes_{i\in I}\boxtimes_{j\in J_i}D_j\rightarrow \boxtimes_{i\in I}\Delta_*^{(J_i)}C_i=\Delta_*^{\alpha}\big(\boxtimes_{i\in I}C_i\big)\xrightarrow{\Delta^{\alpha}(f)}\Delta_*^{\alpha}\big(\Delta_*^IM\big),$$
    which is $\Delta_*^JM,$
    where we use that $\Delta^{\alpha}:=\prod_{i\in I}\Delta^{J_i}\colon X^I\hookrightarrow X^J$ and where $\Delta^{\alpha}\circ\Delta^I$ is identified with $\Delta^J.$
    We further impose the requirements that this composition rule satisfies:
    \begin{list}{$\bullet$}{}  
\item $($Associativity$)$: for $H\rightarrow J$ a morphism, with $\{W_h\}_{h\in H}$ an $H$-family of objects, with $h_j\in P_{H_j}\big(\{W_h\}_{h\in H_j}, D_j\big)^{p_j},$ we have that
   $$f\big(g_i(h_j)\big)=(-1)^{\sum_{i-1}^{|I|}\big(\sum_{\ell=1}^{i-1}(\sum_{j\in J_{\ell}}p_j)\big)n_i}\big(f(g_i)\big)(h_j)\in P_H^{\mathbf{C}}\big(\{W_h\},M\big);$$
   
\item $($Unitality$)$: for all objects $E$ there is an identity $1$-operation $id_E\in P_1\big(\{E\},E)^0,$ of degree $0$ such that $\partial id_E=0,$ and such that for all other operations $f\in P_I\big(\{C_i\},D\big)^n,$ we have
    $id_D(f)=f(id_{C_i})=f,$ for all $i.$
\end{list}
  
The associativity of the composition is phrased by the commutativity of the following diagram:
\[
\adjustbox{scale=.75}{
\begin{tikzcd}
P_I\big(\{C_i\},M\big)\otimes \bigotimes_{i\in I}\bigg(P_{J_i}\big(\{D_j\},C_i\big)\otimes_{j\in J_i}P_{K_j}\big(\{W_k\},D_j\big)\bigg)\arrow[d,] \arrow[r,] & P_I\big(\{C_i\},M\big)\otimes \bigotimes_{i\in I} P_{K_i}^{\mathbf{C}}\big(\{W_k\},D_j\big)
\arrow[dd,] 
\\
P_I\big(\{C_i\},M\big)\otimes \bigotimes_{i\in I} P_{J_i}\big(\{D_j\},C_i\big)\otimes \bigotimes_{j\in J}P_{K_j}\big(\{W_k\},D_j\big) \arrow[d,] 
\\
P_J\big(\{D_j\},M\big)\otimes  \bigotimes_{j\in J}P_{k_j}\big(\{W_k\},D_j\big)\arrow[r,] &  M
\end{tikzcd}}
\]
As $C_i,M$ carry a cohomological grading, the shifts are related by
$$P_I^{\mathbf{C}}\big(\{C_i[n_i]]\},M[m]\big)\cong P_I^{\mathbf{C}}\big(\{C_i\},M\big)\big[m-\sum_{i=1}^{|I|}n_i\big].$$
Spaces of operations are functorial in their arguments and for a given dg-category, induce a pseud-tensor structure in the associated homotopy-category.
\begin{prop}
\label{prop: Ho-pt-cat}
The DG-pseudo tensor structure $\mathcal{P}_I^{\mathbf{C}}$ induces a pseudo-tensor structure on the corresponding homotopy category, denoted $P_I^{H^0(\mathbf{C})},$ defined by 
$$P_I^{H^0(\mathbf{C})}(\{C_i\},D):=H^0\big(\mathcal{P}_I^{\mathbf{C}}(\{C_i,D)^{\cdot}\big).$$
\end{prop}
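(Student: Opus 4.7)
The plan is to verify the four defining data/axioms of a Beilinson--Drinfeld pseudo-tensor structure (spaces of multi-operations, composition along surjections, units, and the associativity/unit axioms) directly on the cohomology of the DG multi-operation complexes $\mathcal{P}_I^{\mathbf{C}}(\{C_i\},D)^{\bullet}$. Since the chain-level structure is given by hypothesis, the essence of the argument is to show that composition, identities, and axioms all descend cleanly to $H^0$.

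First, I would show that the DG-composition map $\gamma$ descends to a well-defined map on cohomology classes. Because $\gamma$ is a chain map of total complexes, on degree-zero cocycles $f$ and $g_i$ the identity
$$\partial\big(f(g_i)\big) = (\partial f)(g_i) + \sum_{k=1}^{|I|}(-1)^{m+\sum_{\ell<k}n_\ell}f\big(g_1,\ldots,\partial g_k,\ldots,g_{|I|}\big),$$
specialized to $m=0$ and all $n_\ell=0$, immediately forces $\partial(f(g_i))=0$, so $f(g_i)$ represents a class in $P_J^{H^0(\mathbf{C})}(\{D_j\},M)$. If any argument is instead replaced by a coboundary (say $f=\partial f'$ for some $f'\in\mathcal{P}_I^{\mathbf{C}}(\{C_i\},M)^{-1}$), the same identity together with $\partial g_i=0$ expresses $f(g_i)=\partial(f'(g_i))$ as a coboundary. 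Hence $\gamma$ descends to a composition on $H^0$ of the prescribed form and is independent of the cocycle representatives chosen.

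Second, the identity $1$-operation $\mathrm{id}_E\in\mathcal{P}_1^{\mathbf{C}}(\{E\},E)^0$ satisfies $\partial\,\mathrm{id}_E=0$ by hypothesis and therefore determines a class $[\mathrm{id}_E]$ serving as the unit in $P_1^{H^0(\mathbf{C})}(\{E\},E)$; the unit identities $\mathrm{id}_D(f)=f(\mathrm{id}_{C_i})=f$ hold strictly at the chain level and survive unchanged. Equivariance of the spaces of multi-operations under bijections of the indexing set is inherited verbatim from $\mathcal{P}_I^{\mathbf{C}}$ and needs no further verification.

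Finally, I would verify associativity. The associativity pentagon commutes as a diagram of complexes by hypothesis, and applying degree-zero cohomology yields a commuting diagram of $\mathbb{C}$-vector spaces. The one point I expect to demand careful bookkeeping, and the main technical obstacle, is confirming that all Koszul signs in the DG associativity identity $f(g_i(h_j))=(-1)^{\sum\bullet}\big(f(g_i)\big)(h_j)$ collapse on $H^0$; since each exponent is a linear combination of the input degrees $m,n_i,p_j$, and all these degrees vanish on degree-zero cocycles, every such sign becomes $+1$. The axioms of an (ungraded) $\mathbb{C}$-linear pseudo-tensor category then hold on the nose for the induced structure on the homotopy category $H^0(\mathbf{C})$.
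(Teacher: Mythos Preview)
Your argument is correct: you verify directly that the chain-level composition $\gamma$ descends to $H^0$ (cocycles compose to cocycles, coboundaries in any slot yield coboundaries), that the units survive, and that the Koszul signs in the associativity identity all collapse to $+1$ on degree-zero classes. One small point you leave implicit is the case where some $g_k$ (rather than $f$) is a coboundary; the same Leibniz formula handles it, since with $m=0$ and $n_i=0$ for $i<k$ the relevant sign is $(-1)^0=+1$, giving $f(\ldots,\partial g_k',\ldots)=\partial\big(f(\ldots,g_k',\ldots)\big)$. This is routine and your phrase ``any argument'' suggests you intended it.

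For comparison: the paper states this proposition without proof, treating it as a standard fact about DG pseudo-tensor categories (in the spirit of Beilinson--Drinfeld). Your explicit verification is therefore more detailed than what the paper provides, but entirely in line with how one would justify such a statement if pressed.
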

We use these generalized spaces of multilinear operations to classify algebraic structures in the $\D$-geometric setting.

\subsection{Classifying operations}
\label{sec: Classifying Operations}
A related example of a $*$-algebraic structure appears in the representation theory of the $L_{\infty}$-operad in $\A^r[\D]^{op}$-modules, where it arises from a cocommutative coassociative coalgebra object in complexes of such modules. When the components are perfect (i.e., dualizable), one may apply the Verdier duality (\ref{eqn: LocVerd}) to recover the corresponding $L_{\infty}^*$-algebroid structure via the Chevalley–Eilenberg formalism. An analogous construction holds for complexes of $\pi^*\A[\mathcal{E}_X]$-modules, where the $n$-ary operations are $\pi^*\A[\mathcal{E}_{X^n}]$-morphisms. The microlocal Verdier duality map (\ref{eqn: MicroLocalVerdier}) is an example of such a unary operation.

The dg-enhancements in the $\D$-setting are constructed via derived moduli of graded 
$\A[\D]$-submodules, using a chiral or differential refinement of Hochschild cohomology. This perspective is naturally phrased using operads \cite{LoVa}. We briefly recall the relevant features in the framework of (co)algebras over (co)operads, following \cite[§3]{FG}.

\subsubsection{Algebras over operads}
We now generalize the construction of the derived scheme
$\mathbb{R}\mathbf{Act}_{\mathcal{D}_X}\big(\mathcal{A}^{\bullet},\mathcal{M}^*\big),$ in \cite{DGQuot} to the present context in order to classify $\mathcal{A}_{\infty}^*$-actions of $\mathcal{A}^{\bullet}\in \mathbf{CAlg}(\mathcal{D}_X),$ on a (differential) graded $\D_X$-module $\mathcal{M}^*.$ 

In the case of (dg)-algebras and vector spaces, one puts 
$$\mathbb{R}\mathbf{Act}_{dg-\mathbb{C}}(A^{\bullet},V):=\mathrm{Act}_{dg-\mathbb{C}}(Q(A^{\bullet}),V),$$
using a cofibrant replacement or quasi-free replacement $Q(A^{\bullet})$ of $A^{\bullet}.$
To appropriately classify (sub)-module structures, a particular resolution is used, arising from the Bar construction.
Then,
$$\mathbb{R}\mathbf{Act}'(A^{\bullet},V):=\mathrm{Act}\big(\mathrm{BarCobar}(A),V\big),$$
using the quasi-free resolution of $A^{\bullet}$ given by the bar resolution of the co-bar resolution of $A^{\bullet}.$

\begin{rem}
\label{rem: Operads}
\normalfont 
Let $(\mathbf{C},\mathcal{P}_{\mathcal{S}})$ be a pseudo-tensor $\infty$-category i.e. for every $I\in \mathcal{S},$ 
$$\mathcal{P}_{I}:(\mathbf{C}^{op})^I\times \mathbf{C}\rightarrow \mathbf{Grpd}_{\infty}.$$
Let $F:(\mathbf{C},\mathcal{P}_{\mathcal{S}}^{\mathbf{C}})\rightarrow (\mathbf{D},\mathcal{P}_{\mathcal{S}}^{\mathbf{D}})$ be a pseudo-tensor $\infty$-functor, so in particular we get maps
$$F_I:\mathcal{P}_{I}^{\mathbf{C}}\big(\{c_i\}_{i\in I};c'\big)\rightarrow \mathcal{P}_{I}^{\mathbf{D}}\big(\{F(c_i)\}_{i\in I};F(c')\big),$$
compatible with compositions and preserving identities.

Given a dg-operad $\EuScript{B},$ the $\infty$-category of $\EuScript{B}$-algebras in $(\mathbf{C},\mathcal{P}_{\mathcal{S}})$ is
$$\EuScript{B}-\mathbf{Alg}_{dg}[\mathbf{C},\mathcal{P}_{\mathcal{S}}]:=\mathrm{Fun}\big(\EuScript{B},(\mathbf{C},\mathcal{P}_{\mathcal{S}})\big).$$
In particular, it is given by an object $c\in \mathbf{C}$ and a morphism of dg-operads
$$\beta:\EuScript{B}\rightarrow \mathcal{P}_{\mathcal{S}}(c).$$
\end{rem}

\begin{ex}
    \label{ex: Algebras over operads}
    \normalfont
\noindent{(a)} If $V$ is a (graded, dg) vector space over $\mathbb{C}$, then a $\EuScript{B}$-algebra structure on $V$ is a morphism 
$\EuScript{B}\rightarrow End_V,$ with $End_V=\{End_V(n)\},$ the endomorphism operad of $V$ i.e. its $n$-array operations are multilinear operations 
$$End_V(n):=\mathrm{Hom}_{\mathbb{C}}\big(V^{\otimes_{\mathbb{C}}n},V\big).$$

\noindent{(b)} If $\mathcal{M}$ is a (graded, dg) $\D_X$-module, a $\EuScript{B}^*$-algebra is a morphism 
$$\EuScript{B}\rightarrow End_{\mathcal{M}}^*.$$
The commonly encountered algebraic structures are:
\begin{enumerate}
    \item $\EuScript{B}=\EuScript{C}omm,$ giving commutative $\D_X$-algebras;
    \item $\EuScript{B}=\EuScript{L}ie.$
    In this case, since $\EuScript{L}ie(2)_0
$ is generated by a binary operation $\ell$ of degree $0$, and 
$\mathrm{Hom}_{\mathrm{Operads}_{DG}}(\EuScript{L}ie,End_{\mathcal{M}}^*)$ is equivalent to 
$$\big\{\mu\in \mathcal{P}_{2}^*(\mathcal{M})^0| \mu^{(12)}=-\mu, \mu\circ_{1}\mu+(\mu\circ_1\mu)^{(123)}+(\mu\circ_1\mu)^{(132)}=0\big\}.$$
\item $\EuScript{B}=\EuScript{A}ssoc$-algebra structure in the $*$-operations amounts to a $*$-pairing $\mu_{\mathcal{A}}\in \mathcal{P}_2^*(\{\mathcal{A},\mathcal{A}\},\mathcal{A})$ such that 
$$\mu_{\mathcal{A}}\big(\mu_{\mathcal{A}}(-,-),-\big)=\mu_{\mathcal{A}}\big(-,\mu_{\mathcal{A}})(-,-)\big),\hspace{1mm}\text{ in }\hspace{1mm} \mathcal{P}_{3}^*(\{\mathcal{A},\mathcal{A},\mathcal{A}\};\mathcal{A}).$$
    \end{enumerate}

\noindent{(c)} Changing the pseudo-tensor structure in (b) to chiral operations $\mathcal{P}_{\mathcal{S}}^{ch},$ we get the notion of $\EuScript{B}^{ch}$-algebras
$\EuScript{B}\rightarrow End_{\mathcal{M}}^{ch}.$
So-called chiral Lie algebras studied in \cite{BD} are thus $\EuScript{L}ie^{ch}$-algebras i.e. $\EuScript{L}ie\rightarrow End_{\mathcal{M}}^{ch}.$
\end{ex}
\begin{rem}[Convolution DG-Algebras]
    \label{rem: Convolution}
    \normalfont
    For any Koszul operad $\EuScript{B}$ and graded $\D_X$-module $\mathcal{M}^*$ 
    $$\mathfrak{g}_{\mathcal{M}}^{\EuScript{B}}:=\mathrm{Hom}\big(\EuScript{B}^{c!},End_{\mathcal{M}}^*\big)=\bigoplus_{n}\mathrm{Hom}\big(\EuScript{B}^{c!}(n),End_{\mathcal{M}}^*(n)\big),$$
    is a differential graded Lie algebra with: an \textcolor{red}{internal} grading induced from $\mathcal{M}$, a \textcolor{blue}{cohomological} grading and a \textcolor{orange}{weight} grading induced from $\EuScript{B}^{c!}:$
    $$\mathfrak{g}_{\mathcal{M}}^{\EuScript{B}}=\bigoplus_{\textcolor{blue}{n\geq -1},\textcolor{red}{i\in\mathbb{Z},\textcolor{orange}{d\geq 0}}}\mathfrak{g}_{\textcolor{red}{i}}^{\textcolor{blue}{n},(\textcolor{orange}{d})}.$$
As a dg-Lie algebra there is a MC-equation and one sets
$$\mathrm{Tw}(\mathfrak{g}_{\mathcal{M}}^{\EuScript{B}}):=\{\text{cohom. degree } n=-1 \text{ solutions of } \mathrm{mc}=0\}.$$
Recall the well-known correspondences (e.g. \cite{LoVa}):
$$
\{\EuScript{B}-\text{alg. structures on }\mathcal{M}\}\simeq\mathrm{MC}\big(\mathfrak{g}_{\mathcal{M}}^{\EuScript{B}}\big)
:=\{\text{weight }d=1\text{ elements in }\mathrm{Tw}(\mathfrak{g}_{\mathcal{M}}^{\EuScript{B}})\big\}.$$
\end{rem}

Convolution algebras are especially transparent if one considers not a $\D_X$-module, but a (graded) finite-dimensional vector space $V$ with $\EuScript{B}=\mathcal{A}ssoc,$ the associative operad. Then $\gamma\in \mathrm{MC}(\mathfrak{g}_{V}^{\mathcal{A}ssoc})$ is nothing but an associative product on $V$ and $\mathfrak{g}_{V,\gamma}^{\mathcal{A}ssoc}$ is just the Hochschild complex for the associative algebra $A=(V,\gamma).$

Convolution dg-Lie algebras associated to algebraic structures can be used describe deformations of algebraic structures.

\begin{rmk}
\label{rmk: Deforming Convolution}
\normalfont 

Given $\gamma\in \mathrm{MC}(\mathfrak{g}_{\mathcal{M}}^{\EuScript{B}}),$ there is a twisted dg-Lie algebra $\mathfrak{g}_{\mathcal{M},\gamma}^{\EuScript{B}},$ given by the same underlying dg-Lie algebra but with differential
$d^{\gamma}:=d+[\gamma,-].$
It encodes deformations of the algebraic structure on $\mathcal{M}$, since 
$$\mathrm{MC}(\mathfrak{g}_{\mathcal{M},\gamma}^{\EuScript{B}})\simeq \{\EuScript{B}-\text{alg. structures on }\mathcal{M} \text{ deforming }\gamma\}.$$
\end{rmk}

For any commutative $\D_X$-algebra $\mathcal{A},$
\[
\begin{tikzcd}
    \mathbb{R}\mathcal{H}om(\mathcal{A}^{\otimes 2},\mathcal{A})\arrow[r] & \mathbb{R}\mathcal{P}_{2}^{ch}(\{\mathcal{A},\mathcal{A}\};\mathcal{A})\arrow[d,"\simeq"] \arrow[r] & \mathbb{R}\mathcal{P}_{2}^{*}(\{\mathcal{A},\mathcal{A}\};\mathcal{A})\arrow[d,"\simeq"]
    \\
    & \mathbb{R}\mathcal{P}^{ch,Ran}(\{\mathcal{A}\};\mathcal{A})\arrow[r] & \mathbb{R}\mathcal{P}^{*,Ran}(\{\mathcal{A}\};\mathcal{A}).
\end{tikzcd}
\]

\begin{rem}
    \label{rem: PT-Convolution and Twisting}
    \normalfont
    For a Koszul\footnote{These are a nice class of quadratic operads to which operads $\mathcal{A}ssoc,\mathcal{C}omm,\mathcal{L}ie$ belong.} DG-operad $\EuScript{K}$ let $\EuScript{K}^{c!}$ denote the Koszul dual DG cooperad and  $\Omega\EuScript{K}^{c!}$ the cobar construction. It comes with a quasi-isomorphic map to $\EuScript{K}$ and a $\EuScript{K}_{\infty}$-algebra structure on an object $c$ in a PT-category is a morphism $\Omega\EuScript{K}^{c!}\rightarrow \mathcal{P}^{\mathbf{C}}(c).$
    Every $\EuScript{K}$-algebra structure is in particular a $\EuScript{K}_{\infty}$-structure i.e. there are maps
    $$\Omega\EuScript{K}^{c!}\xrightarrow{qis}\EuScript{K}\rightarrow \mathcal{P}^{\mathbf{C}}(c).$$
By Reminders \ref{rem: Operads} and \ref{rem: Convolution}, we identify
$\mathrm{Hom}_{\mathrm{Operad}^{DG}}(\Omega\EuScript{K}^{c!},\mathcal{P}^{\mathbf{C}}(c)\big),$
with twisting morphisms in the convolution algebra associated to $c$, denoted $\mathcal{C}onv(\EuScript{K},c)$ i.e. the degree $(-1)$-solutions of the MC-equation.
\end{rem}

Given a differential graded $\mathcal{A}[\mathcal{D}_X]$-module $\mathcal{M}^{\bullet},$ a \emph{local $\EuScript{K}_{\infty}$-algebra structure} is a morphism 
$$\Omega\EuScript{K}^{c!}\rightarrow \mathbf{Maps}_{\mathcal{A}}^*(\{\mathcal{M}\},\mathcal{M}).$$

We construct certain dg-Lie algebras that parametrize $\EuScript{A}_{\infty}$-structures and consider the resulting Maurer-Cartan systems in them.

\begin{defn}
    \label{defn: Sh-Ass-* operations}
    \normalfont
    Let $(\mathcal{A},d_{\mathcal{A}})$ be a dg-$\D$-algebra. A \emph{left $\mathcal{A}_{\infty}^*$-module over} $\mathcal{A}$ is a graded left $\D_X$-module $\mathcal{M}$ with a collection of local $*$-operations
    $$\mu=\{\mu_n\in \mathcal{P}_{[n]\sqcup [pt]}^*(\{\mathcal{A},\mathcal{M}\};\mathcal{M})|deg(\mu_n)=1-n\},$$
satisfying  
$$
\sum_{i=1}^n(-1)^{a_1+\ldots+a_{i-1}}\mu_n(a_1,\ldots,d_{\mathcal{A}}a_i,\ldots,a_n\boxtimes m)$$
$$
= \sum_{i=1}^{n-1}(-1)^i\mu_{n-1}(a_1,\ldots,a_{i}\cdot_{\mathcal{A}}a_{i+1},\ldots,a_{n}\boxtimes m)$$
$$-\sum_{p,q\geq 0,p+q=n}(-1)^{q(a_1+\ldots+a_p)+p(q-1)+(p-1)q}\mu_p\big(a_1,\ldots,a_p\boxtimes \mu_q(a_{p+1},\ldots,a_n\boxtimes m)\big),$$
for $a_1,\ldots,a_n\in \mathcal{A}$ and $m\in \mathcal{M},$ with $a\boxtimes m$ an element of the exterior $\D$-module product i.e. the $\mathcal{D}_{X\times X}$-module $\mathcal{A}\boxtimes\mathcal{M}.$
\end{defn}

The $*$-endomorphisms sheaves in $\mathrm{Mod}(\mathcal{D}_X)$ and $\mathrm{Mod}(\mathcal{A}^{\ell}[\mathcal{D}_X])$ are given by
$\mathcal{E}nd_{\mathcal{D}_X}^*(\mathcal{M}):=\mathcal{H}om_{\mathcal{D}_X}(\mathcal{M},\mathcal{M}\otimes\mathcal{D}_X)$ and $\mathcal{E}nd_{\mathcal{A}[\mathcal{D}_X]}^*(\mathcal{M}):=\mathcal{H}om_{\mathcal{A}[\mathcal{D}_X]}(\mathcal{M},\mathcal{M}\otimes\mathcal{A}[\mathcal{D}_X]),$
respectively. 
These spaces provide examples of associative-$*$-algebras as in Example \ref{ex: Algebras over operads} (b-3). The dg-version of the $*$-pairing corresponds to compositions 
$$\mathbb{R}\mathcal{E}nd_{\mathcal{D}_X}^*(\mathcal{M})\boxtimes \mathbb{R}\mathcal{E}nd_{\mathcal{D}_X}^*(\mathcal{M})\rightarrow \Delta_*^{(2)}\mathbb{R}\mathcal{E}nd_{\mathcal{D}_X}^*(\mathcal{M}).$$

It is then possible to adapt the notion of $\mathcal{A}_{\infty}^*$-morphism $f:\mathcal{M}\rightarrow\mathcal{N}$ to the pseudo-tensor context.

\begin{notate}
\label{notate: Strong abuse}
\normalfont 
    By a (strong) abuse of notation, we write an $\mathcal{A}_{\infty}^*$-morphism for a dg-$\D$-algebra $\mathcal{A}$, a left $\mathcal{A}_{\infty}^*$-module $\mathcal{M}^{\bullet}$ (in dg-$\D$-modules) and a dg-$\D$-module $\mathcal{N}^{\bullet}$ over $\mathcal{A}$, by a collection of $\mathcal{A}[\mathcal{D}]$-linear dg-module maps
$$\{f_n:\mathcal{A}^{\boxtimes n}\boxtimes \mathcal{M}\rightarrow \mathcal{N}_*:\mathrm{deg}(f_n)=-n\},$$
where $\mathcal{N}_*$ is the $\mathcal{D}_{X^{n+1}}$-module push-forward. They satisfying the usual compatibility conditions.
\end{notate}

We now classify all morphisms as in \ref{notate: Strong abuse} in a homotopically meaningful way.

\subsection{Classifying the $\mathcal{A}_{\infty}^*$-structures}
We show that the derived $\D$-Quot scheme admits a presentation as a homotopy colimit in the category of dg-$\D$ schemes, though potentially not as a finite-type object. Nevertheless, we prove it is weakly equivalent to a dg-$\D$-manifold of finite type. The argument proceeds via an explicit computation of the relevant homotopy colimits, first in dg-stacks (yielding a dg-scheme), and then through a construction of a $\D$-Postnikov tower (cf. (\ref{eqn: Postnikov})). As a preliminary step, we characterize certain compact derived $\D$-algebras whose cohomologies are finitely-generated $\D$-modules over the classical $\D$-scheme $\mathrm{Spec}_{\D}\mathcal{H}_{\D}^0(\A).$

\begin{prop}
\label{prop: D-fin type result}
Let $X$ be a quasi-projective scheme, or more generally a $\D$-affine smooth $k$-scheme and let $Z$ be a smooth $\D$-scheme of finite type with $\mathbf{Z}$ a dg-$\mathcal{D}$-scheme such that 
$\mathcal{O}_{Z}\rightarrow \mathcal{H}_{\mathcal{D}}^0(\mathcal{O}_{\mathbf{Z}}^{\bullet})$ is a surjective $\D$-algebra morphism and for every $i<0,$ the components of the graded-structure sheaf $\mathcal{H}_{\mathcal{D}}^{-i}(\mathcal{O}_{\mathbf{Z}}^{\bullet})$ are finitely presented $\mathcal{O}_{Z}[\mathcal{D}]$-modules of finite rank e.g. vector $\D$-bundles over $Z.$ Then there exists a dg-$\D$-scheme $\mathbf{W}$ almost of $\D$-finite type that is weakly-equivalent to $\mathbf{Z}$ and whose induced map $\mathbf{W}$ to $Z$ is affine $\D$-schematic.
\end{prop}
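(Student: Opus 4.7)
The plan is to build $\mathbf{W}$ as a quasi-free cellular resolution over $Z$, in the $\D$-geometric analogue of the Ciocan-Fontanine--Kapranov construction \cite{CFK}. I would take the ambient classical $\D$-scheme of $\mathbf{W}$ to be $Z$ itself, so that $\mathcal{O}_{\mathbf{W}}^{0}=\mathcal{O}_Z$, and construct the negative-degree components inductively, adjoining at each step free $\D$-algebra generators supported on a finite-rank vector $\D$-bundle over $Z$. Affineness of $\mathbf{W}\to Z$ will then be automatic, because $\mathcal{O}_{\mathbf{W}}^{\bullet}$ is quasi-free as a graded commutative $\mathcal{O}_Z[\D_X]$-algebra, and almost-finite-type will follow because the hypothesis on $\mathcal{H}_{\D}^{-i}(\mathcal{O}_{\mathbf{Z}}^{\bullet})$ guarantees that only finitely many new generators are needed in each cohomological degree.

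The first step disposes of $\mathcal{H}_{\D}^{0}$. Let $\mathcal{I}\subset\mathcal{O}_Z$ denote the kernel of the given surjection $\mathcal{O}_Z\twoheadrightarrow\mathcal{H}_{\D}^{0}(\mathcal{O}_{\mathbf{Z}}^{\bullet})$, which is finitely $\D$-generated since $Z$ is of $\D$-finite type. I would choose a vector $\D$-bundle $E_1$ on $Z$ together with a $\D$-surjection $E_1\twoheadrightarrow\mathcal{I}$ and set
\[
\mathcal{O}_{\mathbf{W}^{(1)}}^{\bullet}:=\mathbf{Free}_{\mathcal{O}_Z}\!\big(E_1[1]\big),
\]
with differential sending the degree $-1$ generators into $\mathcal{I}\subset\mathcal{O}_Z$ via the chosen surjection. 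By construction $\mathcal{H}_{\D}^{0}(\mathcal{O}_{\mathbf{W}^{(1)}})\simeq\mathcal{H}_{\D}^{0}(\mathcal{O}_{\mathbf{Z}}^{\bullet})$, and a lift $p^{(1)}\colon \mathcal{O}_{\mathbf{W}^{(1)}}^{\bullet}\to \mathcal{O}_{\mathbf{Z}}^{\bullet}$ inducing this iso on $\mathcal{H}_{\D}^{0}$ exists by the universal property of the free dg-$\D$-algebra.

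Inductively, suppose $\mathbf{W}^{(n-1)}$ has been built with a morphism $p^{(n-1)}\colon\mathcal{O}_{\mathbf{W}^{(n-1)}}^{\bullet}\to \mathcal{O}_{\mathbf{Z}}^{\bullet}$ that is an isomorphism on $\mathcal{H}_{\D}^{-i}$ for $i<n$. The failure of $p^{(n-1)}$ to be an isomorphism on $\mathcal{H}_{\D}^{-n}$ is measured by kernel and cokernel $\mathcal{O}_Z[\D_X]$-modules, both finitely presented: the target $\mathcal{H}_{\D}^{-n}(\mathcal{O}_{\mathbf{Z}}^{\bullet})$ is a finite-rank vector $\D$-bundle by hypothesis, while the source is by induction built from finite-rank $\D$-bundle generators. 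I would then pick a finite-rank vector $\D$-bundle $E_n$ over $Z$ splitting as $E_n^{\mathrm{cok}}\oplus E_n^{\ker}$: adjoin $E_n^{\mathrm{cok}}$ in degree $-n$ with zero differential (or differential landing on cocycle lifts of new cohomology classes coming from the cokernel) and adjoin $E_n^{\ker}$ in degree $-(n+1)$ with differential killing representatives of the kernel classes. Setting $\mathbf{W}:=\mathrm{colim}_n \mathbf{W}^{(n)}$, the morphism $p:=\mathrm{colim}\,p^{(n)}$ is a weak equivalence of dg-$\D$-algebras by construction. The structure sheaf $\mathcal{O}_{\mathbf{W}}^{\bullet}\simeq\mathbf{Free}_{\mathcal{O}_Z}\!\big(\bigoplus_{n} E_n[n]\big)$ is affine over $Z$; almost-finite-type follows since each truncation $\tau^{\leq -m}\mathcal{O}_{\mathbf{W}}^{\bullet}$ coincides with $\tau^{\leq -m}\mathcal{O}_{\mathbf{W}^{(m)}}^{\bullet}$, a finite cellular object $\D$-finitely presented in the sense of Definition \ref{defn: D-AFP}.

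The main obstacle will be the $\D$-linear inductive step. One must verify that the kernel and cokernel of $\mathcal{H}_{\D}^{-n}(p^{(n-1)})$ really are finitely presented $\mathcal{O}_Z[\D_X]$-modules, and more delicately that one can lift a set of cokernel generators to honest cocycles in $\mathcal{O}_{\mathbf{W}^{(n-1)}}^{-n}$ via $\D$-linear maps from a vector $\D$-bundle. This rests on $\mathcal{O}_Z[\D_X]$ having finite homological dimension (because $X$ is $\D$-affine and $Z$ is smooth), so that kernels of $\D$-morphisms between vector $\D$-bundles remain finitely generated, and it leverages Proposition \ref{prop: Tor} to control the interaction of $\mathcal{H}_{\D}^{-n}$ with the derived base change that adjoins the new free generators. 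The hypothesis that each $\mathcal{H}_{\D}^{-i}(\mathcal{O}_{\mathbf{Z}}^{\bullet})$ is a genuine vector $\D$-bundle (rather than merely finitely presented) is precisely what makes the inductive choices of $E_n$ realizable with finite rank, reconciling quasi-free $\D$-affineness over $Z$ with the almost-finite-type property.
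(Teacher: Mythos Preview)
Your proposal is correct and follows essentially the same inductive ``killing cycles'' construction as the paper's proof: both build $\mathbf{W}$ as a quasi-free dg-$\D$-algebra $\mathrm{Sym}_{\mathcal{O}_Z}(V^{\bullet})$ over the ambient $\D$-scheme $Z$ by adjoining finite-rank vector $\D$-bundle generators degree by degree to force isomorphisms on $\mathcal{H}_{\D}^{-i}$. The only organizational difference is that the paper's inductive hypothesis keeps track of a surjection at the current degree $m$ (iso for $i>m$, epi at $m$) and uses a pull-back along $Z^m(\mathcal{O}_{\mathbf{W}_m};\delta)\to\mathcal{O}_{\mathbf{Z}}^m$ together with the quasi-projectivity of $X$ to extract the needed finitely presented submodule, whereas you phrase the step in terms of kernel/cokernel of $\mathcal{H}_{\D}^{-n}(p^{(n-1)})$ and appeal to finite homological dimension; both are standard variants of the same argument.
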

\begin{proof}
Since every object has a cofibrant replacement, which can be taken to be semi-free, i.e., of the form $\mathcal{O}_{\mathbf{W}}^{\bullet}=Sym_{\mathcal{O}_Z}^*(V^{\bullet})$, for some 
bounded complex $V^{\bullet}$ of $\D$-bundles over $Z$ in negative degrees, then one may construct the dg-resolution of the structure sheaf. Concretely, the dg-$\D$-scheme is of the form 
$\mathbf{W}=\big(Z,Sym_{\mathcal{O}_Z[\D]}^*(V^{\bullet})\big),$ which comes with a canonical map to $Z$, that is clearly schematic as a morphism of $\D$-schemes, it is constructed explicitly by induction.
Let $\mathcal{O}_{\mathbf{W}_0}^{\bullet}:=\mathcal{O}_Z$. Given $m\leq 0,$ suppose there exists 
$\mathcal{O}_{\mathbf{W}_m}^{\bullet}\rightarrow \mathcal{O}_{\mathbf{Z}}^{\bullet},$ such that 
$\mathcal{H}_{\D}^{i}(\mathcal{O}_{\mathbf{W}_m}^{\bullet})\rightarrow \mathcal{H}_{\D}^{i}(\mathcal{O}_{\mathbf{Z}}^{\bullet}),$ is an isomorphism for each $i>m,$ and moreover satisfies that $\mathcal{H}_{\D}^m(\mathcal{O}_{\mathbf{W}_m}^{\bullet})\rightarrow \mathcal{H}_{\D}^m(\mathcal{O}_{\mathbf{Z}}^{\bullet})$ is a $\D$-epimorphism, between $\mathcal{O}_Z\otimes{\mathcal{O}_X}\D_X$-modules.
Let $Z^m(\mathcal{O}_{\mathbf{W}_m}^{\bullet};\delta)$ be the sub-sheaf of $\mathcal{O}_{\mathbf{W}_m}^m$ consisting of $\delta$-cocycles, with $\delta$ the differential. Then, consider the pull-back of finitely-presented $\mathcal{O}_Z\otimes_{\mathcal{O}_X}\D_X$-modules,
\[
\begin{tikzcd}
\mathcal{M}^m\arrow[d]\arrow[r] & \mathcal{O}_{\mathbf{Z}}^{m-1}
\arrow[d]
\\
Z^m(\mathcal{O}_{\mathbf{W}_m}^{\bullet};\delta)\arrow[r] & \mathcal{O}_{\mathbf{Z}}^m.
\end{tikzcd}
\]
Fix a coherent sheaf $\mathcal{F}$ on $X$. Since it  is projective, any quasi-coherent sheaf may be written as a union of coherent subsheaves. Let $\mathcal{F}'\subseteq \mathcal{F}$ be such a subsheaf. Consider their images under $\D_{\A}$-induction i.e. there exists an injective morphism of $\D_{\A}$-modules, $p_{\infty}^*(\mathcal{F}')\otimes_{\mathcal{O}_X}\D_{\mathcal{A}}\subseteq p_{\infty}^*(\mathcal{F})\otimes_{\mathcal{O}_X}\D_{\A}.$
Then, we may find a sub-sheaf $\widetilde{\M}_m\subset \M^m$ whose image in $Z^m$ is isomorphic to $\M^m.$ Choose a locally free vector $\D$-bundle $\mathcal{N}$ on $Z$, written locally as $\mathcal{N}\simeq p_{\infty}^*(\mathcal{E}_X)\otimes \mathcal{O}_Z[\D],$ for some coherent sheaf $\mathcal{E}_X$ on $X.$ It may be chosen such that there exists a surjective morphism $\mathcal{N}\rightarrow \widetilde{\M}_m.$
Then let $\mathcal{O}_{\mathbf{W}_{m'}}^{\bullet}$ denote the dg-structure sheaf obtained from $\mathcal{O}_{\mathbf{W}_m}^{\bullet}$ by freely adjoining the generators of $\mathcal{N}$ in degree $(m-1),$ with the differential determined by the map $\mathcal{N}\rightarrow Z^m(\mathcal{O}_{\mathbf{W}_m}^m;\delta).$ Thus, there exists a morphism $\mathcal{O}_{\mathbf{W}_{m'}}^{\bullet}\rightarrow \mathcal{O}_{\mathbf{Z}}^{\bullet}$ together with a morphism 
$\mathcal{H}_{\D}^{\geq m}(\mathcal{O}_{\mathbf{W}_{m'}}^{\bullet})\rightarrow \mathcal{H}_{\D}^{\geq m}(\mathcal{O}_{\mathbf{Z}}^{\bullet}),$ which is an isomorphism. Then, since $\mathcal{H}_{\D}^{m-1}(\mathcal{O}_{\mathbf{Z}}^{\bullet})$ is a finite-presentation $\mathcal{O}_Z[\D_X]$-module, we repeat the above argument, used to construct $\M^m.$ That is, there exists a finite-presentation $\D_{\A}$-module $\mathcal{K}\subseteq Z^{m-1}$ together with an induced $\mathcal{O}_Z[\D]$-module surjection $\mathcal{K}\rightarrow \mathcal{H}_{\D}^{m-1}(\mathcal{O}_{\mathbf{Z}}^{\bullet}).$
Choose a subsheaf $\mathcal{K}_0$ and a surjection $\mathcal{K}_0\rightarrow \mathcal{K},$ we define 
$\mathcal{O}_{\mathbf{W}_{m-1}}^{\bullet})$ by considering $\mathcal{O}_{\mathbf{W}_{m'}}^{\bullet}$ and adjoining $\mathcal{K}_0[m-1],$ i.e. generators in degrees $(m-1).$ Thus, there exists a canonical morphism $\ell_{m-1}:\mathcal{O}_{\mathbf{W}_{m-1}}^{\bullet})\rightarrow \mathcal{O}_{\mathbf{Z}}^{\bullet},$ such that $\mathcal{H}_{\D}^{\geq m}(\mathcal{O}_{\mathbf{W}_{m-1}}^{\bullet})\simeq \mathcal{H}_{\D}^{\geq m}(\mathcal{O}_{\mathbf{Z}}^{\bullet})$ is an isomorphism, and the induced morphism $\mathcal{H}_{\D}^{m-1}(\ell_{m-1})$ is a surjection. This completes the inductive step and the rest is clear.
\end{proof}
Consider for all $t\geq a$ and arity $n\geq 1$ in the multilinear operations, the convolution dg $*$-algebra structure as in \ref{rem: Convolution} in the pseudo-tensor structure of $\mathcal{A}^{\ell}$-modules in $\mathbb{Z}_{+}$-graded perfect $\D_X$-modules $\mathcal{P}^*$ restricted to \emph{degree $0$} morphisms:
$$\mathfrak{g}_{\geq a;t}^{n}:=\mathcal{P}_{[n]\sqcup *}^{*}(\{\mathcal{A},\mathcal{N}_{\geq a}\};\mathcal{N}_t\big)\oplus \mathcal{P}_{[n]\sqcup\{*\}}^*(\{\mathcal{A},\mathcal{N}_{\geq a}\};\mathcal{C}h_t\big).$$
We recall that the $*$-pairing and its $Ran_X$-analog,
$$\mathcal{P}^{*,Ran}(\{\mathcal{N}\},\mathcal{M})^0=\mathbb{R}\mathcal{H}om_{\mathcal{D}_{Ran_X}}\big(\mathcal{N}^{\otimes^{\star n}},\mathcal{M}\big)^{0},$$
via the $\otimes^{\star}$-tensor product \cite[Sect 2.2]{FG}. The corresponding operations in $\mathcal{A}-\mathbf{Mod}(\mathcal{D}_{Ran_X})$ we must use, are formally defined using the pointed Ran space $Ran_{X,*}$, but for our purposes, we need only know that they are described by 
$$\mathcal{P}_{[n]\sqcup *}^{*}(\{\mathcal{A},\mathcal{N}\};\mathcal{M})^0:=\mathbb{R}\mathcal{H}om_{\mathcal{A}\otimes\mathcal{D}_{Ran_X}}(\mathcal{A}^{\otimes^* N}\otimes \mathcal{N},\mathcal{M})^0,$$
where notation $[n]\sqcup\{*\}$ means that for every $n\geq 0$ we have a multi-operation on families $\{\mathcal{A},\mathcal{N}\},$ of a $\D_X$-algebra $\mathcal{A}$ and a $\mathcal{A}[\mathcal{D}_X]$-module $\mathcal{N},$ where we keep the arity of the second argument (corresponding to the module entry) fixed along a given stratum of $Ran_X$ corresponding to $\{*\}.$ Restrict again to degree zero graded morphisms.
\begin{prop}
\label{prop: DGLie*Alg}
The object $\mathfrak{g}_{a,t}^{\bullet}:=\bigoplus_{a\leq s\leq t,n\geq 1}\mathfrak{g}_{\geq a;s}^{n}$ is a $\mathbb{Z}_+$-graded $Lie^*$-algebra object in $\mathcal{A}$-modules in $\mathbf{Mod}(\mathcal{D}_{Ran_X}).$ In particular, there is a canonical degree zero canonical $*$-pairing, 
    $$\mu\in \mathcal{P}_{[2]}^{*,Ran}\big(\{\mathfrak{g}_{\geq a;t_2}^{\bullet},\mathfrak{g}_{\geq a;t_1}^{\bullet}\},\mathfrak{g}_{a;t_2}^{\bullet}\big)^{0},$$
    induced by composition.
\end{prop}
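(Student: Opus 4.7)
The strategy is to exhibit $\mathfrak{g}_{a,t}^{\bullet}$ as a convolution-type $*$-algebra whose pre-Lie composition, upon antisymmetrization, supplies the sought $Lie^{*}$-bracket. The pairing $\mu$ will be defined as insertion in the (unique) module slot: for $f\in\mathcal{P}_{[p]\sqcup\{*\}}^{*}(\{\mathcal{A},\mathcal{N}_{\geq a}\};\mathcal{N}_{t_2})^{0}$ and $g\in\mathcal{P}_{[q]\sqcup\{*\}}^{*}(\{\mathcal{A},\mathcal{N}_{\geq a}\};\mathcal{N}_{t_1})^{0}$, I use that $\mathcal{N}_{t_1}\hookrightarrow\mathcal{N}_{\geq a}$ (since $t_1\geq a$) to feed the output of $g$ into the $\{*\}$-slot of $f$, while juxtaposing the $\mathcal{A}$-entries along the surjection of finite sets $[p+q]\sqcup\{*\}\twoheadrightarrow [p]\sqcup\{*\}$ whose fibre over $\{*\}$ is $[q]\sqcup\{*\}$. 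This is exactly the operadic composition $\gamma$ recalled in the Digression on DG-operations, and delivers a degree-zero element $f\circ g\in\mathfrak{g}_{\geq a;t_2}^{p+q}$. Running this construction over each of the four summands (targets $\mathcal{N}$ or $\mathcal{C}h$ on either factor) and summing over arities and $s\in[a,t]$ produces the announced $\mu\in\mathcal{P}_{[2]}^{*,Ran}\bigl(\{\mathfrak{g}_{\geq a;t_2}^{\bullet},\mathfrak{g}_{\geq a;t_1}^{\bullet}\};\mathfrak{g}_{a;t_2}^{\bullet}\bigr)^{0}$.

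The next step will be to verify the pre-Lie identity
\begin{equation*}
f\circ(g\circ h)-(f\circ g)\circ h=(-1)^{|g||h|}\bigl(f\circ(h\circ g)-(f\circ h)\circ g\bigr),
\end{equation*}
which is the Gerstenhaber calculation translated into $\mathcal{P}^{*,Ran}$. Because every generator of $\mathfrak{g}_{a,t}^{\bullet}$ has exactly one module slot, the only non-trivial insertion pattern is the fully nested one (output of $h$ into $g$, then of $g$ into $f$); the axiom follows from the associativity square of the Digression applied to the composite $H\to J\to I$ of three-element partitions, modulo the $\Sigma_2$-symmetry permuting $g$ and $h$. Setting $[f,g]:=f\circ g-(-1)^{|f||g|}g\circ f$ then yields a degree-zero $*$-bracket, and the Jacobi identity is purely formal from the classical fact that the antisymmetrization of a pre-Lie product is Lie, a statement which transfers to the pseudo-tensor context verbatim because both sides are identities among operations in $\mathcal{P}_{[3]}^{*,Ran}$ (by Proposition \ref{prop: Ho-pt-cat} such relations descend to $H^0$ of the relevant multi-hom complex).

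Finally, the gradings are compatible: the output $s$-degree of $f\circ g$ equals the output $s$-degree of $f$, so the bracket respects the target filtration $s\leq t_2$; and arities add, $n_{f\circ g}=n_f+n_g\geq 1$, supplying the $\mathbb{Z}_{+}$-grading on $\mathfrak{g}_{a,t}^{\bullet}$. The principal technical nuisance I expect is purely book-keeping: keeping track of signs across three simultaneous gradings (cohomological, internal $\mathbb{Z}_{+}$ by target degree, and operadic arity) and verifying $\mathcal{A}\otimes\mathcal{D}_{Ran_X}$-linearity stratum-by-stratum on $Ran_{X,*}$, in particular that nested compositions respect the factorization structure underlying the $\otimes^{\star}$ tensor product of \cite[Sect.~2.2]{FG}. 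Conceptually, however, the proof reduces to the standard convolution dg-algebra construction of Reminder \ref{rem: Convolution}, adapted from the symmetric-monoidal to the pseudo-tensor setting, with the canonical unary embeddings $\mathcal{N}_{t_i}\hookrightarrow \mathcal{N}_{\geq a}$ playing the role that makes the substitution well-defined.
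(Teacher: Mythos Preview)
Your proposal is correct and in fact considerably more detailed than what the paper offers. The paper does not give a proof of this proposition at all: it simply states the result and, in the paragraph that follows, remarks that ``the constructions in \cite[Sect.~2.1, pg.~6]{BKSY2} follow in our case to produce differential graded $Lie^*$-algebra objects,'' together with the observation that all the relevant $\mathcal{D}_{Ran_X}$-modules are supported on the main diagonal. Your convolution/pre-Lie argument---insertion in the unique module slot, the Gerstenhaber pre-Lie identity from associativity of operadic composition, and antisymmetrization to the $Lie^*$-bracket---is precisely the mechanism underlying that reference, so you have essentially unpacked what the paper leaves as a citation.
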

Remark that all $\D$-modules and algebras are supported on the main diagonal i.e. all objects of $\mathcal{D}_{Ran_X}$ we consider are coming from $X\subset Ran_X$ by push-forward.

The constructions in \cite[Sect. 2.1, pg. 6]{BKSY2}, follow in our case to produce differential graded $Lie^*$-algebra objects. By projection onto the direct summands in $\mathfrak{g}^{\bullet},$ as given in Proposition \ref{prop: DGLie*Alg}, we obtain a sequence
\begin{equation}
\label{eqn: DGLA sequence}
\cdots\rightarrow \mathfrak{g}_{a,a+2}^{\bullet}\rightarrow \mathfrak{g}_{a,a+1}^{\bullet}\rightarrow \mathfrak{g}_{a}^{\bullet}.
\end{equation}
Over the fixed $\D$-scheme $\mathrm{Spec}_{\mathcal{D}}(\mathcal{A})$, each term in (\ref{eqn: DGLA sequence}) determines a formal derived $\D$-stack
\begin{equation}
\label{eqn: Y D stack}\mathcal{Y}_{[a,a+2]}:=\underline{\mathbf{Spec}}_{\mathcal{D}}\big(CE^{\bullet,\otimes^{\star}}(\mathfrak{g}_{a,t}^{\bullet}[1])^{\circ}\big).
\end{equation}
In other words, the dg-$\D$-algebra of functions $\mathcal{O}_{[a,a+2]}$ on (\ref{eqn: Y D stack}) is generated by the local dual of the direct summands $\mathfrak{g}^{\bullet}.$ 
We end up with a sequence of formal derived $\D$-stacks,
\begin{equation}
    \label{eqn: Formal derived stack sequence}\ldots\rightarrow \mathcal{Y}_{[a,a+2]}\rightarrow \mathcal{Y}_{[a,a+1]}\rightarrow \mathcal{Y}_{a},
    \end{equation}
or dually, a sequence of cocommutative co-algebras in $\D$-modules i.e. commutative (factorization) co-algebras $\mathcal{B}_{[a,t]}^{\bullet}$ generated by the $(\mathfrak{g}_{a,t}^{\bullet}[1])^{\circ}.$ Since $\mathfrak{g}^{\bullet}$ are $\D$-finitely generated with coherent components we obtain that $\mathcal{B}_{[a,t]}^{\bullet}$ is a finitely presented $\D$-factorization algebra object supported on the main diagonal and thus a finite type $\D$-algebra on $X.$
\begin{prop}
\label{prop: colim of algebra sequence}
   Consider the sequence \emph{(\ref{eqn: DGLA sequence})}. It induces a sequence
   $$\ldots\leftarrow\mathcal{B}_{[a,a+2]}^{\bullet}\leftarrow \mathcal{B}_{[a,a+1]}^{\bullet}\leftarrow \mathcal{B}_{a}^{\bullet},\hspace{1mm} in\hspace{2mm} \mathbf{CAlg}_{X}(\mathcal{D}_X)\subset \mathbf{FAlg}_X(\mathcal{D}_X)^{comm},$$ of freely generated $\D_X$-algebras by the local duals $\mathfrak{g}_{a,t}^{\bullet}[1]^{\circ},$ which are finitely-generated with finite dimensional graded components given by $\mathcal{A}[\mathcal{D}]$-modules of finite rank. The homotopy colimit of $\D$-algebras defines a derived $\D$-stack,  $$\EQ_{a}^{tot}:=\mathbf{Spec}_{\mathcal{D}}\big(\ldots\leftarrow\mathcal{B}_{[a,a+2]}^{\bullet}\leftarrow \mathcal{B}_{[a,a+1]}^{\bullet}\leftarrow \mathcal{B}_{a}^{\bullet}\big).$$
\end{prop}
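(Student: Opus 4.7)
The plan is to apply a Chevalley--Eilenberg style construction in the $Ran_X$-pseudo-tensor setting to each term of (\ref{eqn: DGLA sequence}), take local duals to produce the promised commutative $\D_X$-algebras, verify finiteness at each stage, and finally compute the homotopy colimit via cofibrancy of the transition maps.

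First I would define, for each $t\geq a$, the object $\mathcal{B}_{[a,t]}^{\bullet}$ as the local $\A[\D]$-dual of the $Ran_X$-Chevalley--Eilenberg complex $CE^{\bullet,\otimes^{\star}}(\mathfrak{g}_{a,t}^{\bullet}[1])$ appearing in (\ref{eqn: Y D stack}). Standard Koszul duality for the pair $(\mathcal{L}ie,\mathcal{C}omm)$ makes the underlying graded object the symmetric algebra $\mathrm{Sym}^{*}_{\mathcal{O}_X[\D_X]}\!\bigl(\mathfrak{g}_{a,t}^{\bullet}[1]^{\circ}\bigr)$, with differential deformed by the $Lie^{*}$-bracket of Proposition \ref{prop: DGLie*Alg}; this is precisely the free-generation claim. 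Because all $\D$-modules in the setup are supported on the main diagonal $X\hookrightarrow Ran_X$, the $\otimes^{\star}$-products restrict to ordinary $\otimes_{\mathcal{O}_X}$-products, so $\mathbb{D}^{loc}$ converts the resulting cocommutative factorization coalgebra into a genuine commutative $\D_X$-algebra on $X$. Contravariance of $CE$ then turns the projections $\mathfrak{g}_{a,t+1}^{\bullet}\twoheadrightarrow\mathfrak{g}_{a,t}^{\bullet}$ of (\ref{eqn: DGLA sequence}) into the required inclusions $\mathcal{B}_{[a,t]}^{\bullet}\hookrightarrow\mathcal{B}_{[a,t+1]}^{\bullet}$ in $\mathbf{CAlg}_X(\D_X)$.

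Second I would verify the finiteness assertions. Each direct summand $\mathfrak{g}_{\geq a;s}^{n}$ is the $\D_X$-module of degree-zero $*$-multimorphisms between finitely presented dualizable complexes built from $\mathcal{N}_{\geq a}$, $\mathcal{N}_s$ and $\mathcal{C}h_s$; involutivity and formal integrability fix $\dim(\mathcal{N}_s)=\mathbf{h}^{\mathcal{D}}(s)$, forcing each $\mathfrak{g}_{\geq a;s}^{n}$ to be a finitely generated $\A[\D]$-module of finite rank. By $\D$-affineness of $X$ and the finite homological dimension $2\dim X$ of $\D_X$, the local dual $\mathfrak{g}_{a,t}^{\bullet}[1]^{\circ}$ exists in $\mathbf{Perf}_{\D_X}(\A)$ and remains finitely generated of finite rank in each graded degree. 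Consequently each transition map $\mathcal{B}_{[a,t]}^{\bullet}\hookrightarrow\mathcal{B}_{[a,t+1]}^{\bullet}$ is a free extension by only finitely many new generators in each cohomological degree, hence a cofibration between cofibrant objects in the standard model structure on $\mathbf{CAlg}_X(\D_X)$.

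Finally, cofibrancy of the transition maps ensures that the strict filtered colimit $\mathcal{B}_{[a,\infty]}^{\bullet}:=\mathrm{colim}_{t}\,\mathcal{B}_{[a,t]}^{\bullet}$ computes the homotopy colimit. Applying $\mathbf{Spec}_{\D}$ produces the desired derived $\D$-stack $\EQ_a^{tot}$, realized geometrically as the cofiltered limit of the $\mathcal{Y}_{[a,t]}$ of (\ref{eqn: Formal derived stack sequence}); right Kan extension from $\mathbf{AffSch}$ to $\mathbf{IndAffSch}$ as in Notation \ref{notate: DAG} ensures this is a bona fide derived $\D$-prestack even though $\mathcal{B}_{[a,\infty]}^{\bullet}$ is no longer of $\D$-finite type. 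The main obstacle will be verifying that the $Ran_X$-Chevalley--Eilenberg construction descends to $\mathbf{CAlg}_X(\D_X)$ under $\mathbb{D}^{loc}$ rather than producing a factorization coalgebra with spurious off-diagonal support; this relies crucially on the diagonal-support hypothesis, combined with functoriality of Koszul duality with respect to the $*$-pseudo-tensor structure, and on maintaining the finite-rank property along the tower so that $\mathbf{Spec}_{\D}$ of the colimit defines a $\D$-prestack rather than merely a formal ind-object.
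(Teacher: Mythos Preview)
Your proposal is correct and follows essentially the same approach as the paper, which proves the proposition by adapting the strategy of \cite{BKSY2} via Koszul duality together with Kashiwara's theorem for the diagonal embedding $X\hookrightarrow Ran_X$; your diagonal-support argument reducing the $\otimes^{\star}$-factorization coalgebra to a commutative $\D_X$-algebra is exactly the Kashiwara step, and your cofibrancy argument for the homotopy colimit spells out what the paper leaves implicit. The paper likewise notes that $\EQ_a^{tot}$ is only of pro/ind-affine type rather than $\D$-finite type, matching your final caveat.
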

We prove Proposition \ref{prop: colim of algebra sequence} by adapting the strategy of \cite{BKSY2} and using Kashiwara's theorem for the diagonal embedding together with Koszul duality. Alternatively $\EQ_a^{tot}$ taken in Proposition \ref{prop: colim of algebra sequence} agrees with the corresponding homotopy limit taken in the category of (formal) derived $\D$-stacks; however, it is not of finite type in the $\D$-sense, but rather of the pro/ind-affine type. We overcome this problem by adapting the method in \cite[Prop 4. and Thm 3. of Sect.4]{BKSY2}. 

\subsubsection{Injectivity locus and the quotient}
\label{sssec: Injectivity locus and the quotient}
Consider $a_1>a_0\geq a,$ and the induced morphism 
$$\mathfrak{g}_{a_0;t}^{\bullet}:=\mathcal{P}_{[n]\sqcup *}^{*}(\{\mathcal{A},\mathcal{N}_{a_0}\};\mathcal{N}_{t}\big)\oplus \mathcal{P}_{[n-1]\sqcup\{*\}}^*(\{\mathcal{A},\mathcal{N}_{a_0}\};\mathcal{C}h_t\big)\rightarrow $$
$$\rightarrow \mathcal{P}_{[n]\sqcup *}^{*}(\{\mathcal{A},\mathcal{N}_{a_1 }\};\mathcal{N}_t\big)\oplus \mathcal{P}_{[n-1]\sqcup\{*\}}^*(\{\mathcal{A},\mathcal{N}_{a_1}\};\mathcal{C}h_t\big),$$
which acts by mapping $\mathfrak{g}_{<a_1;s}^k$ to zero for each $k\geq 1,$ and for every $s\geq a_0.$ 
One has an induced sequence of DG-$\mathcal{D}_X$-modules,
$$\mathfrak{h}_{a_0,a_1;t}^{\bullet}\rightarrow\mathfrak{g}_{a_0;t}^{\bullet}\rightarrow \mathfrak{g}_{a_1;t}^{\bullet},$$
with $\mathfrak{h}$ the fiber.

For every $l\geq 1$ and for each $t \geq s$ there is a corresponding right-action on the direct sum 
$$\mathcal{P}_{k\sqcup *}^0(\{\mathcal{R}_+,\mathcal{N}_s\};\mathcal{C}h_{s+1,t})\oplus \mathcal{P}_{k-1\sqcup *}^0(\{\mathcal{R}_+,\mathcal{N}_s\};\mathcal{M}_{s,t}).$$
This generates an action on $\D$-prestacks $\mathcal{Y}_{\alpha,t}.$
\begin{prop}
\label{prop: DG Decomp}
    For every $a_1>a_0\geq a,$ there is a natural morphism between affine DG-$\mathcal{D}$-schemes 
$\mathcal{Y}_{a_0,t}\rightarrow \mathcal{Y}_{a_1;t}.$
More generally, for any $a_0<a_1<\ldots<a_j$ with $t>a_j,$ there are decompositions of graded $\mathcal{D}$-modules 
$$\mathfrak{g}_{a_0;t}^{\sharp}\simeq \mathfrak{g}_{a_j;t}^{\sharp}\oplus \bigoplus_{0\leq i<j}\mathfrak{h}_{a_i,a_{i+1};t}^{\sharp},$$
with induced morphisms 
$f_{a_0,a_j;t}:\mathcal{Y}_{a_0;t}\rightarrow \mathcal{Y}_{a_j;t},$ such that 
$$\mathcal{O}_{\EQ_{a_0;t}}\simeq f^{-1}_{a_0,a_j;t}(\mathcal{O}_{\mathcal{Y}_{a_j;y}})\oplus \bigoplus_{0\leq i<j}f_{a_0;a_j,t}^{-1}\mathcal{J}_{a_i,a_{i+1};t}.$$
\end{prop}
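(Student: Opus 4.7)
My plan is to exploit the Koszul-type construction $\mathcal{Y}_{a;t} = \underline{\mathbf{Spec}}_{\mathcal{D}}\bigl(CE^{\bullet,\otimes^{\star}}(\mathfrak{g}_{a;t}^{\bullet}[1])^{\circ}\bigr)$ from (\ref{eqn: Y D stack}): produce both the morphism and the decomposition on the DG-$\mathrm{Lie}^{*}$-algebra side, then translate everything to DG-$\mathcal{D}$-schemes via the contravariant functor $\mathcal{L}\mapsto \underline{\mathbf{Spec}}_{\mathcal{D}}\bigl(CE^{\bullet,\otimes^{\star}}(\mathcal{L}[1])^{\circ}\bigr)$. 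For the first assertion I would verify that the restriction-and-projection morphism $\mathfrak{g}_{a_0;t}^{\bullet}\to\mathfrak{g}_{a_1;t}^{\bullet}$ described immediately before the proposition is in fact a morphism of $\mathrm{Lie}^{*}$-algebras, by checking its compatibility with the canonical $*$-pairing $\mu$ of Proposition \ref{prop: DGLie*Alg}. This follows because $\mu$ is induced by operadic composition of multilinear $*$-operations, and both restriction of input arguments along $\mathcal{N}_{\geq a_1}\hookrightarrow \mathcal{N}_{\geq a_0}$ and the ``kill lower-index summands'' projection commute with such composition; the $\D$-analog here reduces to the same statement about classical multilinear pseudo-tensor operations. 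Applying $\underline{\mathbf{Spec}}_{\mathcal{D}}\circ CE^{\bullet,\otimes^{\star}}(-[1])^{\circ}$ then supplies the desired $f_{a_0,a_1;t}:\mathcal{Y}_{a_0;t}\to\mathcal{Y}_{a_1;t}$.

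For the graded decomposition of $\mathfrak{g}_{a_0;t}^{\sharp}$ I would split the input as
\[
\mathcal{N}_{\geq a_0} \simeq \bigoplus_{0\leq i<j} \mathcal{N}_{[a_i,a_{i+1})} \;\oplus\; \mathcal{N}_{\geq a_j},
\]
as a graded $\mathcal{A}[\mathcal{D}]$-module, and use multilinearity of the pseudo-tensor structure $\mathcal{P}_{[n]\sqcup *}^{*}(\{\mathcal{A},-\};-)$ to decompose each space $\mathfrak{g}_{\geq a_0;s}^{n}$ according to which sub-interval each of the $n$ input slots belongs to. The sub-summand in which every slot lies in $\mathcal{N}_{\geq a_j}$ is precisely $\mathfrak{g}_{a_j;t}^{\sharp}$; bundling the remaining sub-summands by the smallest index $i$ for which at least one input slot lies in $\mathcal{N}_{[a_i,a_{i+1})}$ identifies them with $\mathfrak{h}_{a_i,a_{i+1};t}^{\sharp}$, because the latter is by definition the graded fiber of the restriction map $\mathfrak{g}_{a_i;t}^{\bullet}\twoheadrightarrow \mathfrak{g}_{a_{i+1};t}^{\bullet}$, namely the operations that vanish identically on $\mathcal{N}_{\geq a_{i+1}}$. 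Equivalently, iterate the $j=1$ case: extend-by-zero furnishes a canonical graded splitting at each step.

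Passing the graded decomposition through $\mathrm{Sym}_{\mathcal{A}[\mathcal{D}]}^{\otimes^{\star}}$ of the local $\mathcal{A}[\mathcal{D}]$-dual gives, with $V^{\circ}:=(\mathfrak{g}_{a_j;t}^{\sharp}[1])^{\circ}$ and $H_i^{\circ}:=(\mathfrak{h}_{a_i,a_{i+1};t}^{\sharp}[1])^{\circ}$,
\[
\mathcal{O}_{\mathcal{Y}_{a_0;t}}^{\sharp} \simeq \mathrm{Sym}(V^{\circ}) \otimes \bigotimes_{0\leq i<j}\mathrm{Sym}(H_i^{\circ}),
\]
as graded commutative $\mathcal{A}[\mathcal{D}]$-algebras, in which the first tensor factor identifies with $f_{a_0,a_j;t}^{-1}\mathcal{O}_{\mathcal{Y}_{a_j;t}}^{\sharp}$ via the structure morphism. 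Splitting each $\mathrm{Sym}(H_i^{\circ})$ as the unit $\oplus\,\mathrm{Sym}^{\geq 1}(H_i^{\circ})$ and then stratifying the tensor product by the \emph{smallest} index $i$ at which $\mathrm{Sym}^{\geq 1}(H_i^{\circ})$ contributes non-trivially produces the stated direct-sum decomposition, with $\mathcal{J}_{a_i,a_{i+1};t}$ defined as the $\mathrm{Sym}(V^{\circ})$-ideal $\mathrm{Sym}^{\geq 1}(H_i^{\circ})\otimes \bigotimes_{k>i}\mathrm{Sym}(H_k^{\circ})$. The main technical obstacle is that all of these isomorphisms are only statements about the underlying $\sharp$-graded structures: the internal CE-differentials and the composition-induced $*$-brackets of Proposition \ref{prop: DGLie*Alg} mix summands across the splitting. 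One must therefore phrase everything in the graded category and resist the temptation to promote the isomorphism to one of DG-$\mathcal{D}$-algebras -- the graded version is precisely what is needed to feed into the $\D$-Postnikov-tower arguments of Section \ref{sec: D-Quot schemes as DG-D-schemes}, as foreshadowed by Subsect.~\ref{sssec: Injectivity locus and the quotient}.
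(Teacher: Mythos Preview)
Your proposal is correct and follows essentially the same route as the paper: the paper's proof is a single sentence stating that the result ``follows immediately'' from the fact that each $\mathcal{Y}_{b;s}$ is by construction the affine $\mathcal{D}$-space represented by the cocommutative coalgebra in $\mathcal{D}$-modules generated by the Koszul dual of $\mathfrak{g}_{b;s}^{\bullet}[1]$. You have simply unpacked this invocation of the CE/Koszul construction in detail---verifying the $\mathrm{Lie}^{*}$-compatibility of the projection, splitting the graded input to obtain the $\mathfrak{h}$-summands, and tracking the decomposition through $\mathrm{Sym}$---and you are right to stress that the resulting isomorphisms live only at the $\sharp$-graded level, which is exactly how the statement is phrased and how it is used downstream.
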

\begin{proof}
Follows immediately and from the fact that $\mathcal{Y}_{b;s}$ as in (\ref{eqn: Y D stack}) is by construction an affine $\mathcal{D}$-space represented by a cocommutative co-algebra in $\mathcal{D}$-modules generated by (Koszul dual of) $\mathfrak{g}_{b;s}^{\bullet}[1].$
\end{proof}

Consider now the sub-space
$$\mathcal{H}om(\mathcal{N}_s,\mathcal{C}h_s)\subseteq \mathfrak{g}_{\geq a;t}^1,\hspace{2mm} s\geq a,$$
consisting of morphisms which correspond to sub-modules i.e. they are injective $\D$-module morphisms. This turns out to be an open condition obtained by imposing a maximality of rank (recall each graded component is finitely rank vector $\D$-bundle, thus posses a well-defined rank). In turn this defines differential graded sub-manifolds given by the DG-$\mathcal{D}$-subschemes 
\begin{equation}
\label{eqn: W-Subschemes}
\mathcal{W}_{[a,t]}\hookrightarrow \mathcal{Y}_{a,t},\forall t\geq a.
\end{equation}
Consequently, we obtain a sequence induced from (\ref{eqn: Formal derived stack sequence}),
denoted 
$$\ldots\rightarrow \mathcal{W}_{[a,a+2]}\rightarrow\mathcal{W}_{[a,a+1]}\rightarrow \mathcal{W}_{a}.$$
The morphisms in this sequence are not necessarily $\D$-affine. 
\begin{prop}
    For every $t\geq b,$ consider the affine dg-$\mathcal{D}$-scheme, defined by the fiber product
    $$\mathcal{U}_{[a,b];t}\simeq \mathcal{W}_{[a,b]}\times_{\mathcal{Y}_{b;t}}^h\mathcal{Y}_{a;t},$$
    via the natural fibration $\mathcal{Y}_{a;t}\rightarrow \mathcal{Y}_{b;t}$ and the open embedding \emph{(\ref{eqn: W-Subschemes})}.
    For every $t\geq b,$ one has the induced morphism of differential graded commutative $\mathcal{D}$-algebras
    $$\mathcal{O}_{\mathcal{W}_{[a,t]}}\rightarrow \mathcal{O}_{\mathcal{U}_{[a,b];t}},$$
    is a quasi-isomorphism.
\end{prop}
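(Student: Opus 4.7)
The approach will be to reduce the assertion to a chain-level comparison of structure sheaves, using the explicit decomposition of Proposition~\ref{prop: DG Decomp} together with the fact that the maximal-rank (injectivity) condition defining the $\mathcal{W}$-loci is an open condition compatible with the tower (\ref{eqn: Formal derived stack sequence}). First I would leverage the splitting
$$\mathcal{O}_{\mathcal{Y}_{a;t}}\simeq f_{a,b;t}^{-1}\mathcal{O}_{\mathcal{Y}_{b;t}}\oplus f_{a,b;t}^{-1}\mathcal{J}_{a,b;t}$$
to identify the projection $f_{a,b;t}\colon \mathcal{Y}_{a;t}\to\mathcal{Y}_{b;t}$ as $\D$-affine with $f_{a,b;t}^{-1}\mathcal{J}_{a,b;t}$ playing the role of the relative (shifted) structure sheaf. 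This reduces the homotopy fiber product defining $\mathcal{U}_{[a,b];t}$ to an ordinary derived tensor product of dg-$\D$-algebras which, thanks to the semi-freeness of $\mathcal{O}_{\mathcal{Y}_{a;t}}$ over $\mathcal{O}_{\mathcal{Y}_{b;t}}$ afforded by the free-algebra presentation of Proposition~\ref{prop: colim of algebra sequence}, computes without Tor corrections by Proposition~\ref{prop: Tor} and Proposition~\ref{prop: D-flat result}.

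Next I would compare the open loci classically. Both $\mathcal{W}_{[a,t]}$ and the classical truncation of $\mathcal{U}_{[a,b];t}$ are realized as open subschemes of $\pi_0(\mathcal{Y}_{a;t})$: the former is cut out by injectivity of $\mathcal{N}_s\hookrightarrow \mathcal{C}h_s$ for every $s\in[a,t]$, while the latter imposes the same condition only for $s\in[a,b]$ (via the $\mathcal{W}_{[a,b]}$-factor of the fiber product, pulled back along $f_{a,b;t}$). This is where the PDE-theoretic hypothesis enters: $b$ is taken past the Spencer involutivity threshold $m_0$, so that formal integrability and involutivity of the underlying symbolic system force injectivity at the higher levels $s\in(b,t]$ to be automatic once it holds on $[a,b]$. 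This is essentially the same propagation argument employed in \cite[Sect.~5]{KSh} to cut out the graded $\D$-Hilbert scheme from finitely many truncations, and it implies that $\mathcal{W}_{[a,t]}$ and $\mathcal{U}_{[a,b];t}$ have the same underlying classical locus. In particular, the comparison morphism is an isomorphism on $\mathcal{H}_{\D}^{0}$.

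The final step is to upgrade this classical identification to a quasi-isomorphism in all cohomological degrees. I would proceed along the $\D$-Postnikov towers (\ref{eqn: Postnikov}) of both structure sheaves, comparing Chevalley--Eilenberg cogenerators summand-by-summand via the decomposition
$$\mathfrak{g}_{a;t}^{\sharp}\simeq \mathfrak{g}_{b;t}^{\sharp}\oplus \mathfrak{h}_{a,b;t}^{\sharp}$$
from Proposition~\ref{prop: DG Decomp}. The cogenerators contributed by $\mathfrak{h}_{a,b;t}$ on the $\mathcal{W}_{[a,t]}$-side are matched by those arising from the $\mathcal{W}_{[a,b]}$-factor on the $\mathcal{U}_{[a,b];t}$-side, and the difference in the two cohomological differentials involves only operadic brackets that vanish when restricted to the injectivity locus. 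The main obstacle I foresee lies in the homotopical bookkeeping of the residual bracket contributions coming from the cross-term $\mathfrak{g}_{b;t}\otimes\mathfrak{h}_{a,b;t}\to\mathfrak{g}_{a;t}$: one must verify these assemble into a contractible correction term, which I would settle via a filtration by the weight grading introduced in Remark~\ref{rem: Convolution} and a spectral sequence whose $E_2$-page vanishes in positive weight by Proposition~\ref{prop: Flat preserves n-truncations} applied to each graded piece, combined with the compatibility of $\mathcal{H}_{\D}^{0}(-)$ with the symmetric monoidal structure already used in the proof of Proposition~\ref{prop: Flat preserves n-truncations}.
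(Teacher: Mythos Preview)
The paper states this proposition without proof: immediately after the statement the text moves on to define the open subscheme $\mathcal{V}_{[a,t]}^0\subseteq \mathcal{U}_{[a,b];t}^0$ via the surjectivity condition (\ref{eqn: Surjectivity condition}), with no intervening \texttt{proof} environment. So there is no argument in the paper against which to compare your proposal.

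That said, your outline is reasonable in spirit and closely parallels the strategy of \cite{CFK2} and \cite{BKSY2} on which this section is modelled. Two comments. First, your step~3 is largely redundant once step~2 is in place: both $\mathcal{W}_{[a,t]}$ and $\mathcal{U}_{[a,b];t}$ are obtained by restricting the \emph{same} dg-structure sheaf $\mathcal{O}_{\mathcal{Y}_{a;t}}^{\bullet}$ to open subsets of the ambient classical $\mathcal{D}$-scheme, so if those open subsets coincide on $\pi_0$ the restricted sheaves agree on the nose, not merely up to a spectral-sequence argument. The elaborate filtration by convolution-weight and the cross-term analysis you propose are unnecessary here. Second, the crux is therefore entirely in your step~2, and there the propagation-of-injectivity argument needs more care than you indicate: the claim that injectivity for $s\in(b,t]$ is automatic once it holds on $[a,b]$ requires the maps $\mathcal{R}_{s-s'}\otimes\mathcal{N}_{s'}\to\mathcal{N}_s$ to be surjective for $s>s'\geq b$, but that is precisely the extra open condition (\ref{eqn: Surjectivity condition}) used \emph{afterwards} to cut out $\mathcal{V}_{[a,t]}$ from $\mathcal{U}_{[a,b];t}$, and is not yet available at this stage. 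The involutivity hypothesis on the ambient $\mathcal{C}h$ controls the ranks of the $\mathcal{C}h_s$, not the module structure on the varying $\mathcal{N}_s$ being parametrized. You should either supply an argument that the Maurer--Cartan equations encoded in the Chevalley--Eilenberg differential of $\mathcal{Y}_{a;t}$ already force this generation property on $\pi_0$ past degree $m_0$, or reformulate the comparison at the level of tangent complexes as in \cite[Prop.~4.3.3--4.3.4]{CFK} (which the paper invokes just below in Subsect.~\ref{sssec: Computing a colimit}).
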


Among the space $\mathcal{U}_{[a,b];t}$ look at the open-subscheme formed by the geometric quotient and let 
$$\mathcal{V}_{[a,t]}^0\subseteq \mathcal{U}_{[a,b];t}^0,$$
denote the open subset given by requiring the additional condition: 
\begin{equation}
    \label{eqn: Surjectivity condition}
\bullet\text{ for every } t\geq s>s'\geq b,\text{ consider only surjections in } P_{[1]\sqcup *}(\mathcal{R}_{s-s'},\mathcal{N}_{s'};\mathcal{N}_s).
\end{equation}
Condition (\ref{eqn: Surjectivity condition}) leads to a well-defined object $\mathcal{V}_{[a,t]}$ defined as the corresponding open dg-$\D$-submanifold given by restriction of the structure sheaf.

\subsubsection{Computing a colimit}
\label{sssec: Computing a colimit}
We use \cite[Prop. 4.3.3-4.3.4]{CFK}, which adapted to our context looks as follows. We may choose 
$\alpha_0<\alpha_1<\ldots, \beta_0<\beta_1<\ldots,t_0<t_1<\ldots,$ such that 
\begin{enumerate}
    \item For each $j\geq 0,$ and for every $t\geq t_j$ the natural map $\mathcal{U}_{[\alpha_j,\beta_j],t}/\mathsf{G}_{\beta_j+1,t}\rightarrow \mathcal{U}_{[\alpha_j,\beta_j],t_j}/\mathsf{G}_{\beta_j+1,t_j},$ induces an isomorphism on classical schemes and a quasi-isomorphism of tangent complexes in degrees $[0,\ell+j+1].$
\end{enumerate}
Thus, there is for every $j\geq 0$ a sequence of weak-equivalences 
$$\mathsf{P}_{\leq \ell+j}(\mathcal{V}_{\alpha_j,t_j})\xleftarrow{\sim} \mathsf{P}_{\leq \ell+j}(\mathcal{V}_{\alpha_j,t_j+1})\xleftarrow{\sim} \dots,$$
where for every $r>0$ we recall that $\mathsf{P}_{\leq r}$ denotes the $r$-th truncation (\ref{eqn: Postnikov}). In particular, it is determined by a sheaf of dg-ideals coming from embedding $\mathsf{P}_{\leq r}(\mathcal{V}_{\alpha,t})\hookrightarrow \mathcal{V}_{\alpha,t}$.

We also get similar maps for each $j\geq 0,$ from $\mathcal{V}_{\alpha_j,t_j+1}\rightarrow \mathcal{V}_{\alpha_j+1,t_j+1}$. These yield weak-equivalences on the $\mathsf{P}_{\leq \ell+j}$'s and in fact we obtain the following infinite diagram of Postnikov truncations:
\begin{equation}
    \label{eqn: Diagram 1}
\begin{tikzcd}
\cdots \rightarrow \mathsf{P}_{\leq \ell}(\mathcal{V}_{\alpha_0,t_2})\arrow[d]\arrow[r,"\sim"] & \mathsf{P}_{\leq \ell}(\mathcal{V}_{\alpha_0,t_1})\arrow[d] \arrow[r,"\sim"] & \mathsf{P}_{\leq \ell}(\mathcal{V}_{\alpha_0,t_0})
\\
\cdots \rightarrow \mathsf{P}_{\leq \ell+1}(\mathcal{V}_{\alpha_1,t_2})\arrow[d]\arrow[r,"\sim"] & \mathsf{P}_{\leq \ell+1}(\mathcal{V}_{\alpha_1,t_1})  & 
\\
\cdots\rightarrow \mathsf{P}_{\leq \ell+2}(\mathcal{V}_{\alpha_2,t_2}) & & 
\end{tikzcd}
\end{equation}

Diagram (\ref{eqn: Diagram 1}) can be replaced by a weakly-equivalent one for which taking the homotopy limits of the rows followed by the homotopy colimit of the entire diagram makes sense. 
\begin{prop}
\label{prop: Z-diagram}
For every $t\geq t_0$, put $\mathbf{Z}_{0;\alpha_0,t}:=\mathsf{P}_{\leq \ell}\big(\mathcal{U}_{[\alpha_0,\beta_0],t}/\mathsf{G}\big).$ Then, there is a natural diagram, 
    \begin{equation}
        \label{eqn: Z-diagram}
          \begin{tikzcd}
\cdots \rightarrow \mathbf{Z}_{0;\alpha_0,t_2}\arrow[d]\arrow[r,"\sim"] & \mathbf{Z}_{0;\alpha_0,t_1}\arrow[d] \arrow[r,"\sim"] & \mathbf{Z}_{0;\alpha_0,t_0}
\\
\cdots \rightarrow \mathbf{Z}_{1;\alpha_0,t_1}\arrow[d]\arrow[r,"\sim"] & \mathbf{Z}_{1;\alpha_0,t_1}  & 
\\
\mathbf{Z}_{2;\alpha_0,t_1} & &  
        \end{tikzcd}
    \end{equation}
   which is moreover weakly-equivalent to \emph{(\ref{eqn: Diagram 1})}.
\end{prop}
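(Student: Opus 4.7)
The plan is to define, for each $j\geq 0$ and $t\geq t_j$, the dg-$\D$-scheme
$$\mathbf{Z}_{j;\alpha_0,t}\;:=\;\mathsf{P}_{\leq \ell+j}\big(\mathcal{U}_{[\alpha_0,\beta_j],t}/\mathsf{G}_{\beta_j+1,t}\big),$$
so that only the \emph{base index} $\alpha_0$ is fixed throughout (while $\beta_j$ and the Postnikov level $\ell+j$ vary with the row). The horizontal maps in (\ref{eqn: Z-diagram}) are induced by the transition morphisms $\mathcal{U}_{[\alpha_0,\beta_j],t'}\to \mathcal{U}_{[\alpha_0,\beta_j],t}$ for $t'\geq t$, followed by the truncation functor, while the vertical maps come from the tower $\beta_0<\beta_1<\cdots$ together with the natural comparisons $\mathcal{U}_{[\alpha_0,\beta_{j+1}],t}\to \mathcal{U}_{[\alpha_0,\beta_j],t}$ and the fact that $\mathsf{P}_{\leq \ell+j+1}$ maps canonically to $\mathsf{P}_{\leq \ell+j}$ (cf.~(\ref{eqn: Postnikov})).

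To compare with (\ref{eqn: Diagram 1}) termwise, I would apply Proposition~\ref{prop: DG Decomp}: for $\alpha_0<\alpha_j$ and $t\geq \beta_j$ the graded decomposition
$\mathfrak{g}_{\alpha_0;t}^{\sharp}\simeq \mathfrak{g}_{\alpha_j;t}^{\sharp}\oplus \bigoplus_{0\le i<j}\mathfrak{h}_{a_i,a_{i+1};t}^{\sharp}$
exhibits $\mathcal{Y}_{\alpha_0;t}$ as the free affine dg-$\D$-scheme built from $\mathcal{Y}_{\alpha_j;t}$ by adjoining the Koszul duals of the $\mathfrak{h}_{a_i,a_{i+1};t}^{\sharp}[1]$. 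Combined with the quasi-isomorphism $\mathcal{O}_{\mathcal{W}_{[a,t]}}\to \mathcal{O}_{\mathcal{U}_{[a,b];t}}$ and the surjectivity condition~(\ref{eqn: Surjectivity condition}) defining $\mathcal{V}_{[a,t]}$, this identifies the open locus $\mathcal{V}_{\alpha_j,t}$, after passage to the geometric quotient $/\mathsf{G}_{\beta_j+1,t}$, with a base-change of $\mathcal{U}_{[\alpha_0,\beta_j],t}/\mathsf{G}_{\beta_j+1,t}$ along the projection that quotients out the free generators coming from $\bigoplus_{0\le i<j}\mathfrak{h}_{a_i,a_{i+1};t}^{\sharp}$.

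The crux is then a degree count: by the hypotheses on the sequences $\alpha_j,\beta_j,t_j$ encoded in condition~(1) of Subsect.~\ref{sssec: Computing a colimit}, the generators contributed by $\mathfrak{h}_{a_i,a_{i+1};t}^{\sharp}[1]$ sit in cohomological degrees strictly exceeding $\ell+j$. Consequently, after applying $\mathsf{P}_{\leq \ell+j}$ the extra free factors are killed, and the resulting comparison
$$\mathsf{P}_{\leq \ell+j}(\mathcal{V}_{\alpha_j,t})\;\xrightarrow{\sim}\;\mathbf{Z}_{j;\alpha_0,t}$$
is a weak equivalence of dg-$\D$-schemes, natural in $t$ and in the vertical Postnikov transitions. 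Naturality in both directions is automatic since Proposition~\ref{prop: DG Decomp} provides a single decomposition compatible with passage from $\beta_j$ to $\beta_{j+1}$, and $\mathsf{P}_{\leq r}$ is a functor.

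The principal obstacle is the degree bookkeeping in the last step: one must verify that the contributions of the $\mathfrak{h}_{a_i,a_{i+1};t}^{\sharp}$ to the tangent complex of $\mathcal{U}_{[\alpha_0,\beta_j],t}/\mathsf{G}$ land in the prescribed range $(\ell+j,\infty)$, which amounts to combining the explicit Koszul-dual description of the structure sheaves $\mathcal{O}_{[\alpha,\beta]}$ from Proposition~\ref{prop: colim of algebra sequence} with the quasi-isomorphism of tangent complexes in degrees $[0,\ell+j+1]$ asserted by condition~(1). Once this is in hand, the remaining verifications (compatibility with horizontal and vertical morphisms, weak-equivalence of the two diagrams) are formal.
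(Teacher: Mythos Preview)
Your construction diverges from the paper's in a way that matters. You define
\[
\mathbf{Z}_{j;\alpha_0,t}\;:=\;\mathsf{P}_{\leq \ell+j}\big(\mathcal{U}_{[\alpha_0,\beta_j],t}/\mathsf{G}_{\beta_j+1,t}\big),
\]
letting $\beta_j$ vary with the row and relying on the Postnikov truncation alone to kill the extra generators coming from the $\mathfrak{h}$-summands. The paper instead keeps $\beta_0$ \emph{fixed} throughout and defines $\mathbf{Z}_{j;\alpha_0,t}$ as the closed dg-$\D$-subscheme of $\mathsf{P}_{\leq \ell+j}\big(\mathcal{U}_{[\alpha_0,\beta_0],t}/\mathsf{G}\big)$ cut out by the dg-$\D$-ideal sheaf generated by
\[
\bigoplus_{0\leq i<j}\;\bigoplus_{m<-\ell-i}\mathcal{J}_{\alpha_i,\alpha_{i+1},t}^{m},
\]
where the $\mathcal{J}_{\alpha_i,\alpha_{i+1};t}^{*}$ are the submodules of $\mathcal{O}_{\mathcal{U}_{[\alpha_0,\beta_0],t}/\mathsf{G}}^{\bullet}$ obtained from the structure-sheaf decomposition of Proposition~\ref{prop: DG Decomp} after restriction to the open locus and passage to the quotient.

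The difference is not cosmetic. Your ``principal obstacle''---that all $\mathfrak{h}_{a_i,a_{i+1};t}^{\sharp}[1]$-contributions land in cohomological degrees strictly beyond $\ell+j$---is precisely what the paper's construction shows does \emph{not} hold uniformly: the ideal one must remove has an $i$-dependent cutoff $m<-\ell-i$, not $m<-\ell-j$. In other words, inside $\mathsf{P}_{\leq \ell+j}$ the piece $\mathcal{J}_{\alpha_i,\alpha_{i+1},t}$ still contributes in degrees between $-(\ell+j)$ and $-(\ell+i)-1$, and these must be excised by hand rather than by truncation. So your degree count, as you suspected, does not go through, and with it the claimed weak equivalence $\mathsf{P}_{\leq \ell+j}(\mathcal{V}_{\alpha_j,t})\xrightarrow{\sim}\mathbf{Z}_{j;\alpha_0,t}$ for your $\mathbf{Z}$. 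The fix is exactly what the paper does: work over the fixed ambient $\mathcal{U}_{[\alpha_0,\beta_0],t}/\mathsf{G}$ and define $\mathbf{Z}_{j;\alpha_0,t}$ by explicitly quotienting out the graded ideal above; the vertical maps in (\ref{eqn: Z-diagram}) then come from the obvious inclusions of ideals as $j$ increases, and the comparison with (\ref{eqn: Diagram 1}) is immediate from Proposition~\ref{prop: DG Decomp}.
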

\begin{proof}
By Proposition \ref{prop: D-fin type result}, the homotopy limit in dg-$\D$-schemes of ind-finite type is the derived $\D$-Quot scheme. We can then show it is weakly equivalent to a dg-$\D$-manifold of finite $\D$-type using the structure sheaf decomposotion of Proposition \ref{prop: DG Decomp}, for each $j>0$ and for all $t\geq t_j.$
Restriction to open dg-$\D$-subschemes and taking quotients, for all $j>0,$ we obtain sheaves of non-positively graded
$\mathcal{O}_{\mathcal{U}_{[\alpha_0,\beta_0],t}/\mathsf{G}}^0[\mathcal{D}]$-submodules, of the form 
$$\bigoplus_{0\leq i<j}\mathcal{J}_{\alpha_i,\alpha_{i+1};t}^*\hookrightarrow \mathcal{O}_{\mathcal{U}_{[\alpha_0,\beta_0],t}/\mathsf{G}}^{\bullet}.$$
Consider for each $j<0$ and $t\geq t_j$ the truncation 
$\mathsf{P}_{\leq \ell+j}\big(\mathcal{U}_{[\alpha_0,\beta_0],t}/\mathsf{G}\big),$ which comes with a closed dg-$\D$-subscheme
$$\mathbf{Z}_{j;\alpha_0,t}\hookrightarrow \mathsf{P}_{\leq \ell+j}\big(\mathcal{U}_{[\alpha_0,\beta_0],t}/\mathsf{G}\big),$$
associated with the dg-$\D$-ideal sheaf generated by
$$\bigoplus_{0\leq i<j}\bigoplus_{m<-\ell-i}\mathcal{J}_{\alpha_i,\alpha_{i+1},t}^m.$$
\end{proof}
With this presentation of the $\D$-Postnikov diagram, one may prove representability by taking a suitable homotopy colimit.

\begin{thm}
\label{thm: Finite--type dg-D-Quot}
The derived $\D$-dg Quot scheme is a dg-$\D$-manifold homotopically of finite $\D$-type.
\end{thm}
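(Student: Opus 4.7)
The strategy is to extract from the diagram in Proposition \ref{prop: Z-diagram} a single dg-$\D$-manifold by first collapsing each row to its common homotopy limit, then computing the homotopy colimit of the resulting tower in dg-$\D$-schemes, and finally verifying that the outcome has the finite-type property in each cohomological degree. Concretely, I would first observe that because every horizontal morphism in diagram (\ref{eqn: Z-diagram}) is a weak equivalence of dg-$\D$-schemes with common underlying classical $\D$-scheme (the invariant locus of $\mathsf{G}$ acting on the injectivity locus $\mathcal{V}_{\alpha_j,t_j}$), the homotopy limit $\mathbf{Z}_{j}:=\operatorname{holim}_{t\geq t_j}\mathbf{Z}_{j;\alpha_0,t}$ is represented by any fixed term $\mathbf{Z}_{j;\alpha_0,t_j}$, which is already almost of $\D$-finite type by the preliminary Proposition \ref{prop: D-fin type result}. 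This reduces the double diagram to a Postnikov tower $\cdots\to\mathbf{Z}_{j+1}\to\mathbf{Z}_{j}\to\cdots\to\mathbf{Z}_{0}$ of $\D$-Postnikov truncations in the sense of (\ref{eqn: Postnikov}).

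Next, I would take the homotopy colimit $\mathbf{Z}_{\infty}:=\operatorname{hocolim}_{j}\mathbf{Z}_{j}$ computed in the category of dg-$\D$-schemes. The key observation is that the transition morphism $\mathbf{Z}_{j+1}\to\mathbf{Z}_{j}$ induces an isomorphism on $\mathcal{H}^{0}_{\D}$ and on cohomologies in the range $[-\ell-j,0]$ (by construction of the truncations and the decomposition of Proposition \ref{prop: DG Decomp}), so that $\mathcal{H}_{\D}^{-i}(\mathcal{O}_{\mathbf{Z}_{\infty}})$ stabilizes at the value $\mathcal{H}_{\D}^{-i}(\mathcal{O}_{\mathbf{Z}_{j}})$ for $j\gg i$. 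Since each $\mathbf{Z}_{j;\alpha_0,t_j}$ is obtained as a quotient by a pseudogroup action of a $\D$-open dg-$\D$-submanifold $\mathcal{V}_{[\alpha_j,t_j]}\subset\mathcal{Y}_{[\alpha_j,t_j]}$ whose structure sheaf is freely generated by the finite-rank vector $\D$-bundle pieces $\mathfrak{g}^{\bullet}_{a,t}[1]^{\circ}$ of Proposition \ref{prop: DGLie*Alg}, these cohomology sheaves are finitely presented $\mathcal{H}_{\D}^{0}(\mathcal{O}_{\mathbf{Z}_{\infty}})[\D_X]$-modules of finite rank, hence vector $\D$-bundles over the ambient classical $\D$-scheme. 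Applying Proposition \ref{prop: D-fin type result} to $\mathbf{Z}_{\infty}$ produces a weakly-equivalent dg-$\D$-scheme whose underlying graded structure sheaf is $\operatorname{Sym}^{*}_{\mathcal{O}_{Z}[\D]}(V^{\bullet})$ for a bounded-below complex of $\D$-bundles; this is precisely the defining property of a dg-$\D$-manifold per Definition \ref{defn: dg-D-Manifold}.

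To conclude finite $\D$-type it remains to check that only finitely many generators are needed in each degree and that the ambient classical $\D$-scheme $\pi_{0}(\mathbf{Z}_{\infty})$ is itself of $\D$-finite type. The former follows from the stabilization above combined with the fact that the cogenerators $\mathfrak{g}^{\bullet}_{a_{i},a_{i+1};t}$ arising in Proposition \ref{prop: DG Decomp} live in finitely many arities at each fixed internal degree. The latter follows from $\pi_{0}(\mathbf{Z}_{\infty})\simeq \pi_{0}(\mathbf{Z}_{0})$ being the classical truncation, which by Proposition \ref{prop: Comparison truncation} is isomorphic on connected components to the classical moduli $\underline{\mathrm{Hilb}}_{\D}^{P}(Z)$ constructed in \cite{KSh}, where it is already known to be of $\D$-finite type. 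The equivalence $\mathbf{Z}_{\infty}\simeq \mathbb{R}\mathbf{Quot}_{\D_X,Z}$ as derived $\D$-prestacks is inherited from the identification of their functors of points on each $\mathbf{Z}_{j;\alpha_0,t}$ via the $A_{\infty}^{*}$-module interpretation of Subsect.~\ref{sec: Classifying Operations}.

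The principal obstacle I anticipate is the final step: verifying that the homotopy colimit computed in dg-$\D$-schemes genuinely represents the derived prestack $\mathbb{R}\mathbf{Quot}_{\D_X,Z}$ rather than merely approximating it, because the dg and derived frameworks are inequivalent (as emphasized in Section \ref{sec: D-Quot schemes as DG-D-schemes}). Overcoming this requires a careful compatibility check between the $\D$-Postnikov system on the dg side and the Koszul-dual description of the formal derived $\D$-stacks $\mathcal{Y}_{[a,t]}$ of (\ref{eqn: Y D stack}), together with $\D$-étale descent from Proposition \ref{prop: DerDQuot is stack} to propagate the local comparison globally.
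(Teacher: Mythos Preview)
Your proposal is correct and follows essentially the same approach as the paper: take the homotopy limit of each row in diagram (\ref{eqn: Z-diagram}) (which is represented by any single term since the horizontal maps are weak equivalences), obtain the resulting $\D$-Postnikov tower, take its homotopy colimit in dg-$\D$-schemes, then observe that each $\mathbf{Z}_{j;\alpha_0,t_j}$ has $\mathcal{O}_{\mathbf{Gr}_{[\alpha_0,\beta_0]}}$ surjecting onto $\mathcal{H}_{\D}^0$ with coherent negative-degree cohomology sheaves, so Proposition \ref{prop: D-fin type result} applies to the colimit. One minor point: you write the Postnikov tower as $\cdots\to\mathbf{Z}_{j+1}\to\mathbf{Z}_j\to\cdots\to\mathbf{Z}_0$, but in the paper's conventions the vertical arrows in (\ref{eqn: Z-diagram}) run the other way (from lower truncation index to higher), so the colimit is of $\mathbf{Z}_{0;\alpha_0,\infty}\to\mathbf{Z}_{1;\alpha_0,\infty}\to\cdots$; this is only a directional convention and does not affect your argument.
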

\begin{proof}
Consider Proposition \ref{prop: Z-diagram}. Take the homotopy limit of each row in (\ref{eqn: Z-diagram}), denoting the result by 
$$\mathbf{Z}_{j;\alpha_0,\infty}:=\mathrm{holim}_{j\geq 0} \mathbf{Z}_{j;\alpha_0,t_j}.$$
We obtain a diagram
$$\mathbf{Z}_{0;\alpha_0,\infty}\rightarrow \mathbf{Z}_{1;\alpha_0,\infty}\rightarrow \mathbf{Z}_{2;\alpha_0,\infty}\rightarrow\cdots.$$
Comparing with (\ref{eqn: Diagram 1}), it gives a $\D$-geometric Postnikov tower (c.f sequence \ref{eqn: Postnikov}). Therefore, we may take the homotopy colimit in dg-$\D$-schemes:
$$\mathbf{Z}_{\infty;\alpha_0;\infty}\simeq\mathrm{hocolim}_{j\geq 0}\mathbf{Z}_{j;\alpha_0,\infty}.$$
Obviously it is not of finite type. Nevertheless, one can notice that each object $\mathbf{Z}_{j;\alpha_0,t_j}$ in (\ref{eqn: Z-diagram}) has the property that $\mathcal{O}_{\mathbf{Gr}_{[\alpha_0,\beta_0]}}$ surjects onto each $\mathcal{H}_{\mathcal{D}}^{0}(\mathcal{O}_{\mathbf{Z}_{j;\alpha_0,t_j}^{\bullet})}$ and for every $m<0$ the cohomology sheaves
$\mathcal{H}_{\mathcal{D}}^{m}(\mathcal{O}_{\mathbf{Z}_{j;\alpha_0,t_j}^{\bullet})}$ are coherent $\mathcal{O}[\mathcal{D}]$-modules. 
Thus the same is true upon taking the homotopy limit and one concludes by applying Proposition \ref{prop: D-fin type result}.
\end{proof}

\end{document}